\tikzstyle{bv}=[circle,draw=black!90,fill=black!100,thick,inner
\newtheorem*{theorem*}{Theorem}
\newtheorem*{recapprop*}{Proposition \ref{RL:qprop}}
\numberwithin{equation}{section}
\newtheorem{theorem}[equation]{Theorem}
\newtheorem{proposition}[equation]{Proposition}
\newtheorem{lemma}[equation]{Lemma}
\newtheorem{corollary}[equation]{Corollary}
\theoremstyle{definition}
\newtheorem{qst}[equation]{Question}
\newenvironment{question}[1][]{\begin{qst}[#1] \pushQED{\qed}}{\popQED \end{qst}}
\newtheorem{rmk}[equation]{Remark}
\newenvironment{remark}[1][]{\begin{rmk}[#1] \pushQED{\qed}}{\popQED \end{rmk}}
\newcommand{\GL}{\mathbf{GL}}
\DeclareMathOperator{\Sym}{Sym}
\DeclareMathOperator{\Symz}{Sz}
\DeclareMathOperator{\Matz}{{\rm Mat}_0}
\DeclareMathOperator{\inv}{inv}
\DeclareMathOperator{\rk}{rank}
\DeclareMathOperator{\sym}{sym}
\DeclareMathOperator{\symz}{sz}
\DeclareMathOperator{\matz}{{\rm mat}_0}
\DeclareMathOperator{\sq}{sq}
\DeclareMathOperator{\rowspace}{rowspace}
\DeclareMathOperator{\sk}{sk}
\newcommand{\F}{\mathbf{F}}
\DeclareMathOperator{\diag}{diag}
\newcommand{\greta}{g}
\newcommand{\ZZ}{\mathbf{Z}}
\newcommand{\arxiv}[1]{\href{http://arxiv.org/abs/#1}{{\tt arXiv:#1}}}
\title{Matrices with restricted entries and $q$-analogues of permutations}
\author{Joel Brewster Lewis \and Ricky Ini Liu \and Alejandro H. Morales \and Greta Panova \and Steven V Sam \and Yan X Zhang} 
\date{February 9, 2012}
\begin{document}

\maketitle

\begin{abstract}
We study the functions that count matrices of given rank over a finite field with specified positions equal to zero.  We show that these matrices are $q$-analogues of permutations with certain restricted values.  We obtain a simple closed formula for the number of invertible matrices with zero diagonal, a $q$-analogue of derangements, and a curious relationship between invertible skew-symmetric matrices and invertible symmetric matrices with zero diagonal.  In addition, we provide recursions to enumerate matrices and symmetric matrices with zero diagonal by rank, and we frame some of our results in the context of Lie theory.  Finally, we provide a brief exposition of polynomiality results for enumeration questions related to those mentioned, and give several open questions.
\end{abstract}


\section{Introduction}\label{sec:intro}

Fix a prime power $q$.  Let $\F_q$ denote the field with $q$ elements and let $\GL(n,q)$ denote the group of $n \times n$
invertible matrices over $\F_q$.  The {\bf support} of a matrix
$(A_{ij})$ is the set of indices $(i,j)$ such that $A_{ij} \ne 0$. Our
work was initially motivated by the following question of Richard Stanley: how many
matrices in $\GL(n,q)$ have support avoiding the diagonal entries? The
answer to this question is
\[
q^{\binom{n-1}{2} - 1}(q-1)^n \left( \sum_{i=0}^n (-1)^i \binom{n}{i}
  [n-i]_q! \right),
\]
which is proven in Proposition~\ref{RL:prop2} as part of a more general result.  This question has a natural combinatorial appeal and is reminiscent of the work of Buckheister \cite{buck} and Bender \cite{bend} enumerating invertible matrices over $\F_q$ with trace zero (see also \cite[Prop. 1.10.15]{ec1}).  It also falls naturally into two broader contexts, the study of $q$-analogues of permutations and the study of polynomiality results for certain counting problems related to algebraic varieties over $\F_q$.

In the former context, we consider the following situation: fix $m,n \geq 1$, $r
\geq 0$, and $S \subset \{(i,j) \mid 1 \le i \le m, 1 \le j \le
n\}$. Let $T_q$ be the set of $m \times n$ matrices $A$ over $\F_q$ 
with rank $r$ and support contained in the complement of
$S$. Also, let $T_1$ be the set of $0$-$1$ matrices with exactly $r$
$1$'s, no two of which lie in the same row or column, and with support
contained in the complement of $S$ (i.e., the set of rook placements avoiding $S$).  We have that $T_q$ is a {\bf $q$-analogue} of $T_1$, in the following precise sense:

\begin{recapprop*} 
  We have $\# T_q \equiv \# T_1 \cdot (q-1)^r \pmod {(q-1)^{r+1}}$.
\end{recapprop*}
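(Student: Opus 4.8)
The plan is to use a torus action on $T_q$ to reduce the congruence to counting the smallest orbits. Let the torus $\mathbf{T} = (\F_q^\times)^m \times (\F_q^\times)^n$ act on $m \times n$ matrices over $\F_q$ by $((s,t) \cdot A)_{ij} = s_i A_{ij} t_j$. Scaling entries by nonzero scalars changes neither the rank nor the support, so $\mathbf{T}$ preserves $T_q$, and the support is constant along each orbit. If $A$ has support $\Sigma$, encode it by the bipartite \emph{support graph} $G = G(\Sigma)$ on $[m] \sqcup [n]$, with an edge $\{i,j\}$ for each $(i,j) \in \Sigma$. The stabilizer of $A$ is then cut out by the equations $s_i t_j = 1$ over the edges of $G$: on each connected component containing an edge, $s_i$ is forced to a common value on that component's rows and $t_j$ to the reciprocal value on its columns, while the variables at isolated vertices are free. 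Counting solutions gives $|\mathrm{Stab}(A)| = (q-1)^{m+n-p-p'+c}$ and hence that the orbit of $A$ has size exactly $(q-1)^{v(G)}$, where $v(G) = p + p' - c$, with $p,p'$ the numbers of non-isolated vertices of $G$ on the two sides and $c$ the number of connected components among those vertices.

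The heart of the argument is the inequality $v(G) \ge r$ for every $A \in T_q$, together with its equality case. Since $A$ has rank $r$, expanding a nonzero $r \times r$ minor produces $r$ cells of $\Sigma$ with pairwise distinct rows and pairwise distinct columns, i.e.\ a matching of size $r$ in $G$. Splitting this matching across the components $C_1,\dots,C_c$ of $G$ and using that the part inside a component with $p_k$ and $p_k'$ vertices on the two sides has size at most $\min(p_k,p_k') \le p_k + p_k' - 1$ (both $p_k,p_k' \ge 1$), we get $r \le \sum_k (p_k + p_k' - 1) = v(G)$. Equality forces $p_k = p_k' = 1$ for all $k$ and the matching to saturate every component, so $\Sigma$ consists of exactly $r = c$ cells, no two sharing a row or column — precisely a placement of $r$ non-attacking rooks avoiding $S$. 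Conversely, any such placement, filled with arbitrary nonzero entries, lies in $T_q$, has rank exactly $r$, and forms a single $\mathbf{T}$-orbit; so the $\mathbf{T}$-orbits on $T_q$ of minimum size $(q-1)^r$ are in bijection with the rook placements enumerated by $T_1$.

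To finish, partition $T_q$ into $\mathbf{T}$-orbits and write $\# T_q = \sum_O (q-1)^{v(O)}$: every summand is divisible by $(q-1)^r$, and modulo $(q-1)^{r+1}$ only the orbits with $v(O) = r$ survive, each contributing $(q-1)^r$, and there are $\# T_1$ of these, giving $\# T_q \equiv \# T_1 \cdot (q-1)^r \pmod{(q-1)^{r+1}}$. I expect the genuine content to be concentrated in the inequality $v(G) \ge r$ and the analysis of its equality case; the orbit-size computation and the final assembly should be routine once the action is set up, and the degenerate situations (isolated vertices of $G$, and the empty or zero cases when $r = 0$ or $r > \min(m,n)$) are easily checked to be consistent.
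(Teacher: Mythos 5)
Your proposal is correct and takes essentially the same route as the paper: the same torus action on $T_q$ (the paper writes it as $(X,Y)\cdot A = XAY^{-1}$), the same stabilizer/orbit-size computation via connected components of the bipartite support graph, and the same identification of the minimal orbits, of size $(q-1)^r$, with the rook placements counted by $T_1$. The only cosmetic difference is how the key inequality is obtained — you extract a size-$r$ matching from a nonzero $r\times r$ minor and count component by component, while the paper simply notes that at least $r$ row vertices and $r$ column vertices are non-isolated — but these are interchangeable estimates with the same equality case.
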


In particular, when $\# T_q$ is a polynomial function of $q$ we have that $\# T_q$ is
divisible by $(q-1)^r$ and $\#T_q / (q-1)^r |_{q=1} = \# T_1$.  Thus,
rank $r$ matrices whose support avoids the set $S$ can be seen as a
$q$-analogue of rook placements that avoid $S$. Applying this to our
situation where $S$ is the set of diagonal entries, we get that the set of
invertible matrices avoiding the diagonal is a $q$-analogue of the set of
derangements, a fact that can also be seen directly from the explicit
formula above.  We will give a more conceptual explanation for this in
Section~\ref{sec:bruhat} using the Bruhat decomposition of
$\GL(n,q)$.

Note that for an arbitrary set $S$ of positions, the function $\#T_q$ need not be a polynomial in $q$.  (Stembridge
\cite{stem} gives an example of non-polynomial $\#T_q$ with $n=m=7$, $r=7$, and a set $S$ with $\#S=28$ -- see Figure \ref{fig:Fano}.)  The second context concerns the question of which sets $S$ give a polynomial $\#T_q$ and is deeply related to a speculation of Kontsevich from 1997 (see Stanley \cite{spt} and Stembridge \cite{stem}) that was proven false by Belkale and Brosnan \cite{bb}. We provide further background on this topic in Section~\ref{sec:polynom}.

We close this introduction with a summary of the results of our paper.

Section~\ref{sec:zerodiagonal} is concerned with Stanley's question on
the enumeration of matrices in $\GL(n,q)$ with zero diagonal. We
attack this problem by enumerating larger classes of matrices.  We provide two recursions,
one based on the size of the matrix and the other based on the rank
of the matrix, and we provide a closed-form solution for the first
recursion. 

In Section~\ref{sec:skewsym}, we 
enumerate symmetric matrices in $\GL(n,q)$ whose support avoids
the diagonal in the case that $n$ is even. These matrices may be viewed as a
$q$-analogue of symmetric permutation matrices with zero diagonal,  i.e., fixed point-free involutions.  A curious byproduct of
our formula is that it also counts the number of symmetric matrices in
$\GL(n-1,q)$ as well as the number of skew-symmetric matrices in
$\GL(n,q)$. (This latter equality was obtained earlier by Jones \cite{oj}.) We remark that there is an obvious bijection between
skew-symmetric matrices and symmetric matrices with $0$ diagonal
obtained by reversing some signs, but this map does not preserve the
property of being invertible. In fact, the varieties associated to these three classes of matrices are pairwise non-isomorphic, and we have not found a satisfactory reason that their solution sets have the same size.

In Section~\ref{sec:symmetricrank}, we attack the general
problem of enumerating symmetric matrices with zeroes on the diagonal
with given rank. We provide recursions for arbitrary rank and for full
rank and solve the latter one to obtain the enumeration of symmetric
matrices in $\GL(n,q)$ when $n$ is odd. The situation in this case is
significantly more complicated than in Sections \ref{sec:zerodiagonal} and \ref{sec:skewsym}.

Finally, in Section~\ref{sec:polynom} we prove Proposition \ref{RL:qprop},
discuss the two broader contexts mentioned above and give some open questions
about families of sets $S$ for which $\#T_q(m\times n, S,r)$ is a polynomial in $q$.

While similar methods of proof are used in the various sections, each section is essentially self-contained and so they can be read independently, if desired.

\subsection*{Notation}

Given an integer $n$, we define the $q$-number $[n]_q = \frac{q^n - 1}{q-1}$, 
the $q$-factorial $[n]_q!  = [n]_q \cdot [n-1]_q \cdot [n-2]_q \cdots$
and the $q$-double factorial $[n]_q!! = [n]_q \cdot [n-2]_q \cdot [n-4]_q \cdots$ (with $[0]_q!!=[-1]_q!!=1$).  In addition, we use a number of invented notations; to avoid confusion and for easy reference, we include a table of these functions here.  The last column indicates the sections in which the notation is used.

~

\noindent
\begin{tabular}{llp{3.25in}l} 
set             & $\#$ set      & description & section\\ \hline

$\Matz(n,k,r)$  & $\matz(n,k,r)$ & set of $n\times n$ matrices of rank $r$ over $\F_q$ with first $k$ diagonal entries equal to zero & \ref{sec:Greta's recursion}\\
                & $\matz(r; n, r', *)$ & number of $n \times n$ matrices $A$ of rank $r'$ with lower-right $(n - 1) \times (n - 1)$ block of rank $r$ & \ref{sec:Greta's recursion}\\
                & $\matz(r; n, r', 0)$ & ditto, where in addition we require $A_{1,1} = 0$& \ref{sec:Greta's recursion}\\
$\Sym(n)$       & $\sym(n)$   & set of $n\times n$ symmetric invertible  matrices over $\F_q$ & \ref{sec:skewsym}, \ref{sec:symmetricrank}\\
$\Sym(n, r)$       & $\sym(n, r)$   & set of $n\times n$ symmetric matrices over $\F_q$ of rank $r$ & \ref{sec:symmetricrank} \\
$\Sym_0(n,r)$   & $\sym_0(n,r)$  & set of $n\times n$ symmetric matrices with rank $r$ over $\F_q$ with diagonal entries equal to zero &\ref{sec:symmetricrank} \\
$\Symz(n,k)$ & $\symz(n,k)$ & set of $n\times n$ symmetric invertible matrices with first $k$ diagonal entries equal to zero & \ref{sec:symmetriczero}, \ref{sec:4.2}\\ 
$\Sym_0(n,k,r)$ & $\sym_0(n,k,r)$ & set of $n\times n$ symmetric matrices with rank $r$ with first $k$ diagonal entries equal to zero & \ref{sec:symmetricrank}\\
                & $\sym_0(r; n, r', *)$ & number of $n \times n$ symmetric matrices $A$ of rank $r'$ with lower-right $(n - 1) \times (n - 1)$ block of rank $r$ & \ref{sec:4.1}\\
                & $\sym_0(r; n, r', 0)$ & ditto, where in addition we require $A_{1,1} = 0$& \ref{sec:4.1}\\
$T_q(m\times n,S,r)$ & $\#T_q(m\times n,S,r)$ & set of $m\times n$ matrices over $\F_q$ with rank $r$ and support contained in the complement of $S$ & \ref{sec:intro}, \ref{sec:polynom}\\
$T_1(m\times n, S,r)$ & $\#T_1(m\times n, S,r)$ & set of $0$-$1$ matrices with exactly $r$ $1$'s, no two of which lie in the same row or column, and with support contained in the complement of $S$ & \ref{sec:intro}, \ref{sec:polynom}
\end{tabular}

\subsection*{Acknowledgements} 

We thank Richard Stanley for suggesting the original problem and
Alexander Postnikov for several insights and suggestions. We also
thank David Speyer and John Stembridge for helpful discussions and
comments, and David Blum for technical support. Steven Sam was
supported by an NSF graduate research fellowship and an NDSEG
fellowship. Ricky Liu and Yan X Zhang were supported by NSF graduate research fellowships.

\section{Matrices with zeroes on the
  diagonal} \label{sec:zerodiagonal}

In this section, we consider the problem of counting invertible matrices over
$\F_q$ with zero diagonal. In Section~\ref{rickyrec} we recursively count full rank matrices of rectangular shape with all-zero diagonal and in Section~\ref{sec:Greta's recursion} we recursively count square matrices by rank and number of zeroes on the diagonal. We solve the first
recursion and obtain a closed form formula for the number of
invertible matrices with zeroes on the diagonal.  These numbers give an enumerative $q$-analogue of the derangements, i.e., dividing all factors of $q - 1$ and setting $q = 1$ in the result gives the number of derangements, and we give a conceptual proof of this fact in Section~\ref{sec:bruhat}.

\subsection{Recursion by size} \label{rickyrec}

For $1 \leq k \leq n$, denote by $f_{k,n}$ the number of $k \times n$ matrices $A$ over $\F_q$ such that $A$ has full rank $k$ and such that $A_{ii}=0$ for $1\leq i \leq k$. We first show that $f_{k,n}$ satisfies a simple
recurrence, and then we solve this recurrence for an explicit formula
for $f_{k,n}$. In particular, we will have a formula for $f_{n,n}$,
the number of invertible $n \times n$ matrices with zeroes on
the diagonal.

\begin{proposition} \label{RL:prop1}
For $1 \leq k < n$, the number $f_{k, n}$ of $k\times n$ matrices with rank $k$ and diagonal entries equal to $0$ satisfies the recursion 
\[
f_{k+1,n} = q^{k-1}(q-1)(f_{k, n} \cdot [n-k]_q  -  f_{k,n-1})
\]
with initial values $f_{1,n} = q^{n-1}-1$.
\end{proposition}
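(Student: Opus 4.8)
The plan is to build a $(k+1) \times n$ matrix of rank $k+1$ with zero diagonal out of a $k \times n$ matrix of rank $k$ with zero diagonal in its first $k$ rows, by adjoining a new bottom row. First I would fix such a $k \times n$ matrix $A$, counted by $f_{k,n}$; its row space $R$ is a $k$-dimensional subspace of $\F_q^n$. To extend $A$ to rank $k+1$, the new row $v = (v_1, \dots, v_n)$ must lie outside $R$, and to respect the zero-diagonal condition I need $v_{k+1} = 0$. So the first step is to count vectors $v \in \F_q^n$ with $v_{k+1} = 0$ and $v \notin R$. The number of such $v$ is $q^{n-1}$ minus the number of vectors in $R$ whose $(k+1)$-st coordinate vanishes, i.e.\ $q^{n-1} - \#\{w \in R : w_{k+1} = 0\}$. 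The subtlety is that this count depends on whether the coordinate functional $e_{k+1}^*$ restricted to $R$ is zero or not: if $e_{k+1}^*|_R \equiv 0$ then all $q^k$ vectors of $R$ qualify, otherwise exactly $q^{k-1}$ do.

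The key step, then, is to organize $f_{k,n}$ according to this dichotomy. The functional $e_{k+1}^*$ vanishes on the row space of $A$ precisely when the $(k+1)$-st column of $A$ is zero. Let $f_{k,n}^{0}$ be the number of matrices counted by $f_{k,n}$ whose $(k+1)$-st column is entirely zero, and $f_{k,n}^{\ne}$ the number where it is not (so $f_{k,n} = f_{k,n}^0 + f_{k,n}^{\ne}$). A matrix with zero $(k+1)$-st column, after deleting that column, is exactly a $k \times (n-1)$ matrix of rank $k$ whose first $k$ diagonal entries are zero — and deleting column $k+1 > k$ does not disturb the diagonal positions $1, \dots, k$ — so $f_{k,n}^0 = f_{k,n-1}$, hence $f_{k,n}^{\ne} = f_{k,n} - f_{k,n-1}$. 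Therefore the number of valid new rows $v$ summed over all choices of $A$ is
\[
f_{k,n}^0 \cdot (q^{n-1} - q^k) + f_{k,n}^{\ne} \cdot (q^{n-1} - q^{k-1}) = f_{k,n-1}(q^{n-1} - q^k) + (f_{k,n} - f_{k,n-1})(q^{n-1} - q^{k-1}).
\]
Simplifying, this equals $q^{k-1}(q-1)\bigl(f_{k,n}[n-k]_q - f_{k,n-1}\bigr)$, since $q^{n-1} - q^{k-1} = q^{k-1}(q^{n-k}-1) = q^{k-1}(q-1)[n-k]_q$ and $q^{n-1}-q^k - (q^{n-1}-q^{k-1}) = -q^{k-1}(q-1)$.

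The main obstacle — really the only thing to be careful about — is the claim that extending $A$ by each admissible bottom row $v$ yields distinct matrices of rank exactly $k+1$ with the right zero pattern, and that every such $(k+1)\times n$ matrix arises exactly once this way. That the rank is $k+1$ is immediate from $v \notin R$; that the diagonal is zero follows because rows $1,\dots,k$ keep their zero diagonal entries and $v_{k+1}=0$; and the decomposition "top $k$ rows $\oplus$ bottom row" is a genuine bijection, so no overcounting occurs. Finally, the initial value $f_{1,n} = q^{n-1}-1$ counts nonzero row vectors in $\F_q^n$ with first coordinate zero, which is immediate. I would close by remarking that the same column-deletion bookkeeping is what makes the term $f_{k,n-1}$ appear, and this is the structural feature the later closed-form solution will exploit.
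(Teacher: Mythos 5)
Your proposal is correct and follows essentially the same route as the paper's proof: extend a rank-$k$ matrix by a new row chosen from the hyperplane $\{v_{k+1}=0\}$ outside the row space, and split the count according to whether the $(k+1)$-st column of the top block vanishes (which is exactly what produces the $f_{k,n-1}$ term). The case analysis, the identification of the zero-column matrices with $f_{k,n-1}$, and the final algebraic simplification all match the paper.
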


\begin{proof}
  Let $A$ be a $k \times n$ matrix of rank $k$ and such that $A_{ii}=0$ for $1\leq i\leq k$,
  let $V \subset \F_q^n$ be the row span of $A$, and let $W
  \subset \F_q^n$ be the subspace of vectors with $(k+1)$-st
  coordinate $0$. To extend $A$ to a $(k+1)\times n$ full rank matrix with zero diagonal,
  we choose for the final row any vector in $W \backslash V$. We have
  two cases:
\begin{description}
\item[Case 1:] the $(k+1)$-st column of $A$ is not entirely $0$.  In this case, $V \not \subset W$.  Since $W$ is the kernel of the linear form $x_{k + 1}$, $V \cap W$ is the kernel of
  the linear form $x_{k+1}|_V$.  This form is not identically zero, so $\dim (V \cap W) = \dim V - 1 = k-1$. 
  Thus, there are $\#(W \backslash V) = q^{n-1}-q^{k-1}$ choices for
  the final row.
\item[Case 2:] the $(k+1)$-st column of $A$ is entirely $0$.  In this case, $V \subset
  W$ and so $\dim (V \cap W) = \dim V = k$.  Thus, there are $\#(W \backslash V) = q^{n-1}-q^k$ choices for the final row.
\end{description}

The total number of matrices $A$ is $f_{k,n}$, while the number with
$(k+1)$-st column entirely 0 is $f_{k,n-1}$ (as we can remove the
column to get a full rank $k \times (n-1)$ matrix). It follows that
\begin{align*}
  f_{k+1,n} & = (f_{k,n} - f_{k,n-1})(q^{n-1}-q^{k-1}) + f_{k,n-1}(q^{n-1}-q^k) \\
  & = f_{k,n}(q^{n-1}-q^{k-1})-f_{k,n-1}(q^{k}-q^{k-1}) \\
  & = q^{k-1}(q-1)(f_{k,n}\cdot[n-k]_q-f_{k,n-1}),
\end{align*}
as desired.
\end{proof}

With this observation, we can calculate the number $f_{k,n}$ explicitly.
\begin{proposition} \label{RL:prop2}
For $1 \leq k \leq n$, 
\[
f_{k,n} = q^{\binom{k-1}{2}}(q-1)^k \left( q^{-1} \sum_{i=0}^k
  (-1)^i\binom{k}{i}\frac{[n-i]_q!}{[n-k]_q!}\right)
\]
is the number of $k \times n$ matrices of rank $k$ with zeroes on
the diagonal. In particular, 
\[
f_{n,n} = q^{\binom{n-1}{2}}(q-1)^n \left( q^{-1} \sum_{i=0}^n
  (-1)^i\binom{n}{i}[n-i]_q!\right)
\]
is the number of invertible $n \times n$ matrices with zeroes on
the diagonal.
\end{proposition}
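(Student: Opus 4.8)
The plan is to prove the explicit formula for $f_{k,n}$ by induction on $k$, using the recursion from Proposition~\ref{RL:prop1}. Write $g_{k,n} = q^{-1} \sum_{i=0}^k (-1)^i \binom{k}{i} \frac{[n-i]_q!}{[n-k]_q!}$, so that the claim is $f_{k,n} = q^{\binom{k-1}{2}} (q-1)^k g_{k,n}$. For the base case $k=1$, we have $g_{1,n} = q^{-1}([n]_q! / [n-1]_q! - [n-1]_q!/[n-1]_q!) = q^{-1}([n]_q - 1) = q^{-1} \cdot \frac{q^n - q}{q - 1} = q^{-1} \cdot q \cdot [n-1]_q$, wait — more directly $[n]_q - 1 = \frac{q^n - 1}{q-1} - 1 = \frac{q^n - q}{q-1} = q \cdot \frac{q^{n-1}-1}{q-1}$, so $q^{\binom{0}{2}}(q-1)^1 g_{1,n} = (q-1) \cdot q^{-1} \cdot q \cdot \frac{q^{n-1}-1}{q-1} = q^{n-1} - 1 = f_{1,n}$, matching the stated initial value.

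For the inductive step, assume the formula holds for some $k$ with $1 \le k < n$ (and for all relevant values of $n$, including $n-1$, noting we need $k < n-1$ or else handle $f_{k,k}$; in fact the recursion and formula should be checked to interoperate correctly at the boundary). Substitute the inductive hypothesis into the recursion:
\[
f_{k+1,n} = q^{k-1}(q-1)\Bigl( q^{\binom{k-1}{2}}(q-1)^k g_{k,n} \cdot [n-k]_q - q^{\binom{k-1}{2}}(q-1)^k g_{k,n-1} \Bigr).
\]
Factoring out $q^{\binom{k-1}{2}}(q-1)^{k+1}$ and collecting the power of $q$, since $\binom{k-1}{2} + (k-1) = \binom{k}{2}$, this becomes $q^{\binom{k}{2}}(q-1)^{k+1}\bigl( g_{k,n}\,[n-k]_q - g_{k,n-1}\bigr)$. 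So the entire problem reduces to the purely $q$-combinatorial identity
\[
g_{k+1,n} = g_{k,n} \cdot [n-k]_q - g_{k,n-1}.
\]

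The heart of the argument is therefore verifying this last identity, which is the main obstacle — though it is a routine (if slightly fiddly) manipulation of $q$-binomial and $q$-factorial expressions rather than anything deep. The approach is to multiply through by $q \cdot [n-k-1]_q!$ to clear denominators: the identity becomes
\[
\sum_{i=0}^{k+1} (-1)^i \binom{k+1}{i} \frac{[n-i]_q!}{1} = [n-k]_q \sum_{i=0}^k (-1)^i \binom{k}{i} \frac{[n-i]_q!}{[n-k]_q!}\cdot[n-k-1]_q! \;-\; [n-k-1]_q!\sum_{i=0}^k (-1)^i \binom{k}{i}\frac{[n-1-i]_q!}{[n-1-k]_q!},
\]
but it is cleaner to instead divide the target identity through by $[n-k-1]_q!/q$ on both sides, reducing everything to the Pascal-type recurrence $\binom{k+1}{i} = \binom{k}{i} + \binom{k}{i-1}$ for ordinary binomial coefficients together with the telescoping observation that $[n-i]_q!/[n-k]_q! \cdot [n-k]_q = [n-i]_q!/[n-k-1]_q!$ and $[n-1-i]_q!/[n-1-k]_q! = [n-1-i]_q!/[n-k-1]_q!$. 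Matching the coefficient of $[n-i]_q!/[n-k-1]_q!$ on each side then yields exactly the Pascal identity after reindexing the shifted sum $i \mapsto i-1$; one checks the extreme terms $i=0$ and $i=k+1$ separately (they come only from the first sum on the right and the new top term, respectively). The final claim $f_{n,n}$ is just the case $k=n$, where $[n-k]_q! = [0]_q! = 1$, so $\frac{[n-i]_q!}{[n-k]_q!} = [n-i]_q!$ as stated; but note this requires that the formula for $f_{k,n}$ actually extend to $k = n$, which is not covered by Proposition~\ref{RL:prop1}'s range $k < n$ — so I would close by checking the $k = n$ case is consistent, e.g. by a separate direct argument running the extension-by-one-row computation one final time (Case 2 cannot occur when the matrix is square of full rank in the relevant way, or rather one tracks it honestly) or by verifying the recursion's derivation in fact remains valid at $k = n-1 \to n$.
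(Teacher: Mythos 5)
Your proof is correct and follows essentially the same route as the paper: induction on $k$ via the recursion of Proposition~\ref{RL:prop1}, with the inductive step reducing to Pascal's identity $\binom{k+1}{i}=\binom{k}{i}+\binom{k}{i-1}$ after reindexing the shifted sum. Your closing worry about the case $k=n$ is unnecessary: $f_{n,n}$ is produced by the recursion with $k=n-1$, which lies squarely in the stated range $1\le k<n$, and the inductive hypothesis at step $k$ (taken uniformly over all $n\ge k$) already covers the square case $f_{k,k}$ that appears as $f_{k,n-1}$ when $n=k+1$.
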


\begin{proof}
We proceed by induction on $k$. When $k=1$, we need to show that 
\[
f_{1,n}=(q-1)q^{-1}([n]_q - 1)=(q-1)[n-1]_q=q^{n-1}-1.
\]
This is clear since $f_{1,n}$ counts nonzero elements of $\F_q^n$ with first coordinate $0$.

For the inductive step, assuming the claim holds for $k$ we have by Proposition \ref{RL:prop1} that 
\begin{align*}
  f_{k+1,n}&=q^{k-1}(q-1)(f_{k,n}\cdot[n-k]_q-f_{k,n-1})\\ 
  &= q^{\binom{k}{2}}(q-1)^{k+1}\cdot q^{-1}\left( \sum_{i=0}^k
    (-1)^i\binom{k}{i}\frac{[n-i]_q!}{[n-k]_q!}\cdot
    [n-k]_q-\sum_{i=0}^k
    (-1)^i\binom{k}{i}\frac{[n-i-1]_q!}{[n-k-1]_q!}\right)\\  
  &=q^{\binom{k}{2}}(q-1)^{k+1}\cdot q^{-1}\left( \sum_{i=0}^k
    (-1)^i\binom{k}{i}\frac{[n-i]_q!}{[n-k-1]_q!}+\sum_{i=1}^{k+1}
    (-1)^i\binom{k}{i-1}\frac{[n-i]_q!}{[n-k-1]_q!}\right)\\ 
  &=q^{\binom{k}{2}}(q-1)^{k+1}\left( q^{-1}\sum_{i=0}^{k+1}
    (-1)^i\binom{k+1}{i}\frac{[n-i]_q!}{[n-k-1]_q!}\right)
\end{align*}
as desired.
\end{proof}

\begin{remark} \label{remark:qderangements} In the expression for
  $f_{n,n}$, the $q=1$ specialization of the alternating sum is
  \[
  \sum_{i=0}^n (-1)^i\binom{n}{i}(n-i)! = n!\sum_{i=0}^n
  \frac{(-1)^i}{i!},
  \]
  which is the $n$-th derangement number $d_n$. The above proof does
  not ``explain'' this fact, so we provide another proof that better
  elucidates the result.
\end{remark}

\subsection{The Bruhat decomposition} \label{sec:bruhat}

The square matrices with all-zero diagonal of full rank are exactly the matrices in
$\GL(n,q)$ with all-zero diagonal.  This slight rephrasing motivates us to recall the Bruhat decomposition of $\GL(n,q)$: let $B$
be the subgroup of $\GL(n, q)$ of lower triangular matrices.  We have a double
coset decomposition
\[
\GL(n,q) = \coprod_{w \in \mathfrak{S}_n} B w B,
\]
into Bruhat cells, where we abuse notation by using $w \in \mathfrak{S}_n$ to also denote
the corresponding permutation matrix. Hence, every matrix $g \in
\GL(n,q)$ can be uniquely written in the form $bg^w$ where $b \in B$,
$w \in \mathfrak{S}_n$, and $g^w$ is a matrix with a 1 in location
$(w(i),i)$ for all $i$, and such that $g^w_{w(i),j} = 0$ whenever $j >
i$ and $g^w_{k,i} = 0$ whenever $k > w(i)$. For a fixed $w$, call this
set of matrices $B(n,w)$. We will enumerate the number of elements in
each of these sets with zeroes on the diagonal.

First we need a lemma. Let $z(N,q)$ be the number of solutions to the equation $\sum_{j=1}^N A_i B_i = 0$ where $A_i, B_i \in \F_q$, and let $y(N,q)$ be the number of solutions to the equation $\sum_{j=1}^N A_i B_i = \alpha$ where $\alpha$ is some nonzero element of $\F_q$. This number is independent of which $\alpha$ we choose. 

\begin{lemma}\label{Bru:lem1}
We have $ z(N,q) \equiv 1 \pmod{(q-1)}$ and $ y(N,q) \equiv 0 \pmod{(q-1)} $.
\end{lemma}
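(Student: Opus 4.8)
The plan is to compute $z(N,q)$ and $y(N,q)$ explicitly as polynomials in $q$ and then reduce modulo $q-1$; since $z(N,q)+(q-1)\cdot y(N,q) = q^{2N}$ (partitioning the $q^{2N}$ choices of $(A_i,B_i)$ by the value of the bilinear form), it suffices to nail down one of the two. First I would set up a recursion on $N$: conditioning on the values of $A_N$ and $B_N$, we get $z(N,q) = (2q-1)\,z(N-1,q) + (q-1)^2 y(N-1,q)$ for the case $A_N B_N = 0$, and a complementary relation for $y$. Using $z+(q-1)y = q^{2N}$ to eliminate $y$, this collapses to a single first-order recursion in $z(N,q)$, with base case $z(0,q)=1$, $y(0,q)=0$.

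Solving that recursion (a routine geometric-series computation) yields closed forms; the upshot is that $z(N,q) \equiv 1$ and $y(N,q)\equiv 0 \pmod{q-1}$. In fact one can avoid solving it entirely: reduce the recursion modulo $q-1$. Modulo $q-1$ we have $q\equiv 1$, so the relation $z(N,q)+(q-1)y(N,q)=q^{2N}$ already gives $z(N,q)\equiv 1 \pmod{q-1}$ directly, and then $(q-1)y(N,q) = q^{2N} - z(N,q)$ forces $y(N,q) \equiv 0 \pmod{q-1}$ as soon as we know $z(N,q) \equiv 1$ — but that last step needs the congruence for $z$, which we just obtained, so we must be careful the reasoning isn't circular. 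The clean way: prove $z(N,q)\equiv 1\pmod{q-1}$ first by an independent argument, then deduce the statement for $y$.

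For the independent argument on $z$: count solutions of $\sum A_iB_i = 0$ by stratifying on the vector $(A_1,\dots,A_N)\in\F_q^N$. If $(A_i)=0$, all $q^N$ choices of $(B_i)$ work. If $(A_i)\neq 0$, the equation $\sum A_iB_i=0$ is a single nontrivial linear equation in $(B_i)$, with exactly $q^{N-1}$ solutions. Hence $z(N,q) = q^N + (q^N-1)q^{N-1} = q^{2N-1}+q^N - q^{N-1}$, which is $\equiv 1+1-1 = 1 \pmod{q-1}$. Then $y(N,q) = \frac{q^{2N}-z(N,q)}{q-1} = \frac{q^{2N}-q^{2N-1}-q^N+q^{N-1}}{q-1}$, an integer (indeed a polynomial in $q$) that visibly vanishes at $q=1$, hence $\equiv 0 \pmod{q-1}$.

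The only mild subtlety — and the step I would flag — is the assertion that $y(N,q)$ is independent of the choice of nonzero $\alpha$; this follows because scaling $B_1\mapsto \lambda B_1$ is a bijection on solution sets of $\sum A_iB_i=\alpha$ and $\sum A_iB_i=\lambda\alpha$, and $\lambda$ ranges over all of $\F_q^\times$. Once that is noted, everything else is a short finite-geometry count; there is no real obstacle here, and the lemma reduces to the elementary fact that a nontrivial linear equation over $\F_q$ in $N$ unknowns has $q^{N-1}$ solutions.
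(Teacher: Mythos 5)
Your proof is correct, but it takes a different route from the paper's. The paper's argument is a one-line orbit count: the group $\F_q^\times$ acts on the solution set of $\sum A_iB_i = 0$ (respectively $=\alpha$) by $A_i \mapsto xA_i$, $B_i \mapsto x^{-1}B_i$; every orbit has size $q-1$ except the single fixed point $A_i = B_i = 0$, which exists only when the right-hand side is $0$. This gives both congruences simultaneously with no computation and no need to know that $y$ is independent of $\alpha$. You instead compute $z(N,q) = q^{2N-1}+q^N-q^{N-1}$ exactly by stratifying on the vector $(A_i)$, and then recover $y$ from $z + (q-1)y = q^{2N}$; this is the computation the paper relegates to a remark ("it isn't too hard to prove the formulas... but we will not need these exact counts"). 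Your version buys the explicit closed forms for free and is entirely elementary; the paper's version is shorter, generalizes immediately to any equation invariant under such a torus action, and sidesteps the $\alpha$-independence issue that you correctly flag as the one subtle point in your approach (and which the paper asserts without proof, though your scaling bijection $B_1 \mapsto \lambda B_1$ justifies it). Both arguments are complete; you were right to worry about circularity in the shortcut "reduce $z+(q-1)y=q^{2N}$ mod $q-1$" and right that the fix is to establish $z \equiv 1$ independently first.
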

\begin{proof}
  In both cases, the multiplicative group $\F_q^\times$ acts
  on the set of solutions: for any $x \in \F_q^\times$ and a
  set of solutions $\{A_i\} \cup \{B_i\}$, we can multiply all the
  $A_i$ by $x$ and all the $B_i$ by $x^{-1}$ to get another
  solution. Each orbit has size $q-1$ except for one, namely when all
  the $A_i$ and $B_i$ are zero. The result follows.
\end{proof}

\begin{remark} It isn't too hard to prove the formulas 
  \[
  z(n,q) = q^{n-1} (q^n + q - 1), \quad y(n,q) = q^{n-1} (q^n - 1),
  \]
  but we will not need these exact counts here.
\end{remark}

Now we are ready to give an alternative explanation for
Remark~\ref{remark:qderangements}.
\begin{theorem}
Consider the number of matrices in $B(n,w)$ with zero diagonal. In the case that $w$ is a derangement, this number will be of the form $(q-1)^n(q^i + (q-1)f(q))$ for some polynomial $f$ and some $i$. In all other cases, this number will be divisible by $(q-1)^{n+1}$. 
\end{theorem}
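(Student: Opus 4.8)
The plan is to use the Bruhat factorization to break the count over the cell $BwB$ into a product of $n$ independent one-variable contributions, one per diagonal entry, and then to read off the power of $q-1$ from which indices are fixed points of $w$.

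First I would fix $w$ and write an arbitrary matrix of $BwB$ (i.e., an arbitrary matrix with Bruhat type $w$) uniquely as $g = b\,g^w$ with $b\in B$ and $g^w\in B(n,w)$. Because $b$ is lower triangular, $g_{ii}=\sum_{k=1}^{i}b_{ik}\,g^w_{ki}$ involves only the $i$-th row of $b$ and the first $i$ entries of the $i$-th column of $g^w$. The rows of $b$ vary independently (row $i$ contributes $b_{i1},\dots,b_{i,i-1}\in\F_q$ freely and $b_{ii}\in\F_q^\times$), and the free entries of $g^w$ lying in different columns occupy disjoint positions; hence the conditions $g_{ii}=0$ decouple over $i$, and the number we want equals $N_w:=\prod_{i=1}^{n}M_i$, where $M_i$ counts the choices of (the $i$-th row of $b$; the free entries of $g^w$ in column $i$) for which $g_{ii}=0$.

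Next I would evaluate $M_i$, the crucial point being the coefficient $g^w_{ii}$ of $b_{ii}$ in the linear equation $g_{ii}=0$. If $w(i)\ne i$, the $1$ of column $i$ of $g^w$ lies in row $w(i)\ne i$, so $g^w_{ii}$ is either a forced $0$ (when $w(i)<i$ or $w^{-1}(i)<i$) or one of the free parameters of $g^w$ (when $w(i)>i$ and $w^{-1}(i)>i$). In the first case $b_{ii}$ does not appear in the equation, hence is free in $\F_q^\times$; in the second case the substitution $t\mapsto b_{ii}t$ (with $t=g^w_{ii}$) is a bijection of $\F_q$ for $b_{ii}\ne 0$, so again $b_{ii}$ ranges freely over $\F_q^\times$. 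Either way a direct computation gives $M_i=(q-1)\,\widetilde M_i$ with $\widetilde M_i\equiv 1\pmod{q-1}$; in particular $M_i\ne 0$ and $(q-1)^2\nmid M_i$. If $w(i)=i$, the coefficient of $b_{ii}$ is $g^w_{ii}=1$, so $g_{ii}=0$ forces $b_{ii}=-\sum_{k<i}b_{ik}g^w_{ki}$, which is a legal choice only when it is nonzero; the residual count is then $q^{2a_i}-z(a_i,q)=(q-1)\,y(a_i,q)$ (with $a_i$ the number of free entries of $g^w$ in column $i$), and since $y(a_i,q)\equiv 0\pmod{q-1}$ by Lemma~\ref{Bru:lem1}, we get $(q-1)^2\mid M_i$ (explicitly $M_i=q^{i-2}(q-1)^2[a_i]_q$, and $M_i=0$ if $a_i=0$).

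Finally I would assemble the product. If $w$ is a derangement, every factor carries exactly one $q-1$ and $M_i/(q-1)\equiv 1\pmod{q-1}$, so $N_w=(q-1)^n\prod_i(M_i/(q-1))$ with the product $\equiv 1\pmod{q-1}$; after extracting the largest power of $q$ dividing it, the product has the form $q^{i}+(q-1)f(q)$, which is what is claimed. If $w$ is not a derangement, pick a fixed point $i_0$: then $(q-1)^2\mid M_{i_0}$ while $(q-1)\mid M_i$ for all $i$, so $(q-1)^{n+1}\mid N_w$. The step I expect to be the main obstacle is the computation of $M_i$ when $w(i)\ne i$: one must correctly determine, from the relative order of $i$, $w(i)$, $w^{-1}(i)$ and from the set $\{i'>i:w(i')<w(i)\}$, which of the first $i$ entries of column $i$ of $g^w$ are forced zeros, which is the forced $1$, and which are free parameters, and then track how this interacts with the constraint $b_{ii}\in\F_q^\times$ --- it is precisely this interaction that produces a second factor of $q-1$ at every fixed point of $w$, and at no other index.
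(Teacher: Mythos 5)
Your proposal is correct and is essentially the paper's own argument: you factor $g=bg^w$, observe that the $n$ diagonal equations involve disjoint sets of variables and hence decouple into a product of per-index counts, and then analyze each index according to whether $w(i)=i$ and where the $1$ and forced zeros sit in column $i$ of $g^w$, using Lemma~\ref{Bru:lem1} (i.e., $z\equiv 1$, $y\equiv 0 \pmod{q-1}$) to extract exactly one factor of $q-1$ at each non-fixed point and two at each fixed point. Your bookkeeping (including the explicit fixed-point count $q^{i-2}(q-1)^2[a_i]_q$) is, if anything, slightly more careful than the paper's, but the route is the same.
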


\begin{proof}
  Write $g = bg^w$. Then the diagonal terms of $g$ are $g_{d,d} =
  \sum_{i=1}^n b_{d,i} g^w_{i,d}.$ For each $d=1,\dots,n$, the
  variables appearing in the sum are distinct, so we can count the
  solutions to each equation $g_{d,d} = 0$ independently of one
  another. We need to consider three cases, corresponding to $w(d) >
  d$, $w(d) = d$, and $w(d) < d$. Let $N_w^d$ be the number of terms
  in the $d$-th column of $g^w$ that are strictly above the diagonal
  and not forced to be 0 by our definition of $g^w$.

\begin{compactenum}[1.]
\item Suppose that $w(d) > d$. Then the equation $g_{d,d} = 0$ becomes 
\[
-b_{d,d}g^w_{d,d} = \sum_{i=1}^{d-1} b_{d,i} g^w_{i,d}.
\] 
We know that $b_{d,d} \ne 0$ and $g^w_{d,d}$ is either 0 and could be nonzero if $w(j) \ne d$ for $j < i$. If $g^w_{d,d} = 0$, then we are counting solutions to the equation 
\[
\sum_{j=1}^{N_w^d} A_j B_j = 0
\]
where $A_j, B_j \in \F_q$ are arbitrary. If $g^w_{d,d} \ne 0$, we are counting solutions to the equation $\sum_{j=1}^{N_w^d} A_j B_j = \alpha$ where $\alpha$ is some nonzero element of $\F_q$. So we conclude that the contribution is either $(q-1)(z(N_w^d, q) + y(N_w^d, q))$ or $(q-1)z(N_w^d, q)$. 
\item Suppose that $w(d) < d$. Then the equation $g_{d,d} = 0$ becomes 
\[
-b_{d,w(d)} = \sum_{i=1}^{w(d)-1} b_{d,i} g^w_{i,d}.
\] 
We know that $b_{d,w(d)}$ can be arbitrary. So the number of solutions
is of the form $z(N_w^d, q) + (q-1)y(N_w^d, q)$. The variables
$b_{d,j}$ for $w(d) < j \le d$ have not been involved, so we can
choose them arbitrarily subject to $b_{d,d} \ne 0$. Hence we get a
contribution of
\[
(q-1)q^{d-1-w(d)}(z(N_w^d, q) + (q-1)y(N_w^d, q)).
\] 
\item Finally, suppose that $w(d) = d$. Then the equation $g_{d,d} = 0$ becomes 
\[
-b_{d,d} = \sum_{i=1}^{d-1} b_{d,i} g^w_{i,d},
\] 
and $b_{d,d} \ne 0$. Hence the contribution is $(q-1)y(N_w^d, q)$. 
\end{compactenum}

Since the count for each $d$ is independent of the other $d$, we
multiply the above contributions to get an expression for the number of
matrices in $B(n,w)$ with zero diagonal. Using Lemma \ref{Bru:lem1}, we get the desired
conclusion.
\end{proof}

Hence, when we specialize $f_{n,n}$ to $q=1$ after dividing by
$(q-1)^n$, we will get a contribution of $1$ for each derangement and
$0$ for all other permutations, which gives us the derangement
numbers.

\subsection{Recursion by rank}\label{sec:Greta's recursion}

In this section, we use recursive methods to attack the problem of
enumerating square matrices with a prescribed number of zeroes on the
diagonal by rank.  We use the following strategy: each $n \times n$
matrix can be inflated to $q^{2n + 1}$ different $(n + 1) \times (n +
1)$ matrices, and we count these by keeping careful track of what
their rank is and how many zeroes they have on the
diagonal.  Unfortunately, in this case we are unable to solve the 
recursion to provide a closed formula for the result.

Let $\matz(n,k,r)$ be the number of $n\times n$ matrices over $\F_q$ 
of rank $r$ whose first $k$ diagonal entries are zero (and the other 
diagonal entries may or may not be zero).

\begin{proposition} \label{prop:Greta's recursion with k zeroes} We
  have the following recursion:
  \[
  \matz(n+1,k+1,r+1) = \frac{1}{q}\matz(n+1,k,r+1) +
  (q^{r+1}-q^r)\matz(n,k,r+1) - (q^r-q^{r-1})\matz(n,k,r)
  \]
  with initial conditions 
  \[
  \matz(n,0,r) = \frac{q^{\binom{r}{2}}(q-1)^r}{[r]_q!}
  \left(\prod_{i=0}^{r-1} [n-i]_q\right)^2.
  \]
\end{proposition}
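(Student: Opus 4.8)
The plan is to establish the recursion via an inflation argument: given an $(n+1)\times(n+1)$ matrix $A$ whose first $k$ diagonal entries vanish, we track what happens when we delete its first row and first column to obtain an $n\times n$ matrix $A'$ (whose first $k$ diagonal entries then also vanish, after reindexing). Conversely, each $n\times n$ matrix $A'$ of rank $r'$ can be inflated in $q^{2n+1}$ ways by prepending a row and a column, and the rank of the resulting $A$ is either $r'$ or $r'+1$ or $r'+2$, so the counts of inflations by target rank give linear relations among the $\matz(n,k,r)$. To get the stated recursion one wants to isolate, among matrices $A$ of rank $r+1$ with first $k+1$ diagonal entries zero, the dependence on the rank $r$ of the lower-right $n\times n$ block; this is exactly the purpose of the auxiliary quantities $\matz(r;n+1,r',*)$ and $\matz(r;n+1,r',0)$ introduced in the notation table. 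First I would prove a sub-lemma counting, for fixed rank $r$ of the deleted block, how many ways the new first row/column/corner entry can be chosen so that the full matrix has rank $r+1$, splitting according to whether the corner entry $A_{1,1}$ is forced to be zero or free.

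The key case analysis is the linear-algebra heart: if $A'$ has rank $r$ and row space $V'\subseteq\F_q^n$, column space $U'$, then prepending a column $c$ and a row $(a, r)$ (with $a=A_{1,1}$ the corner) raises the rank by $2$ iff $c\notin U'$ \emph{and} the new row is independent of the old rows together with $c$; it raises by $1$ iff exactly one independence condition is violated; it stays the same iff $c\in U'$ and, after expressing $c$ in the old columns, the forced value of the corner and the compatibility of the new row all hold. Counting each of these three buckets is a standard subspace-counting computation in terms of $q$, $n$, and $r$ (using $\#(W\setminus V)$-type formulas as in the proof of Proposition~\ref{RL:prop1}), but it must further be refined by the condition $A_{1,1}=0$, since that is what distinguishes $\matz(n+1,k+1,\cdot)$ from $\matz(n+1,k,\cdot)$. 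Assembling these refined counts and summing over the rank $r$ of the deleted block yields an identity expressing $\matz(n+1,k+1,r+1)$ as a combination of $\matz(n,k,r+1)$, $\matz(n,k,r)$, and $\matz(n+1,k,r+1)$; matching coefficients gives the displayed recursion, and the factor $\frac1q$ on the $\matz(n+1,k,r+1)$ term reflects that exactly a $1/q$ fraction of matrices with first $k$ diagonal entries zero additionally have the $(k+1)$-st diagonal entry zero, \emph{after} one accounts correctly for the interaction with the rank constraint — this is the subtle point.

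For the initial conditions, with $k=0$ there is no diagonal restriction, so $\matz(n,0,r)$ is simply the number of $n\times n$ matrices over $\F_q$ of rank $r$, a classical quantity. I would derive it by choosing an $r$-dimensional column space (a Grassmannian count), then choosing an $n\times r$ matrix of rank $r$ whose columns span it is not quite the right bookkeeping; cleaner is: the number of rank-$r$ matrices equals $\binom{n}{r}_q$ times the number of $n\times r$ full-rank matrices, which is $\binom{n}{r}_q \prod_{i=0}^{r-1}(q^n-q^i)$. A direct manipulation rewrites $\binom{n}{r}_q\prod_{i=0}^{r-1}(q^n-q^i)$ as $\frac{q^{\binom r2}(q-1)^r}{[r]_q!}\bigl(\prod_{i=0}^{r-1}[n-i]_q\bigr)^2$, since $\prod_{i=0}^{r-1}(q^n-q^i) = q^{\binom r2}(q-1)^r\prod_{i=0}^{r-1}[n-i]_q$ and $\binom nr_q = \frac{\prod_{i=0}^{r-1}[n-i]_q}{[r]_q!}$.

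The main obstacle I anticipate is not any single computation but the careful bookkeeping in the refined case analysis: tracking simultaneously (i) the rank jump ($0$, $1$, or $2$) upon inflation, (ii) whether the corner entry $A_{1,1}$ is a free parameter or is forced, and (iii) whether that forced or free value is zero — and then verifying that the three-term-plus-correction combination collapses exactly to the clean recursion stated, with the $\frac1q$ appearing as claimed. Getting the degenerate sub-cases right (e.g. when $c\in U'$ but the new row still increases the rank, or when $r=0$) is where errors are most likely to creep in.
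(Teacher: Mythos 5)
Your proposal follows essentially the same route as the paper: inflate/deflate by the first row and column, reduce the count to the auxiliary quantities $\matz(r;n+1,r',*)$ and $\matz(r;n+1,r',0)$ (which depend only on the rank of the deleted block, after a symmetric reduction of $B$ to $\diag(1^r,0^{n-r})$), do the three-way case analysis on the rank jump refined by whether $A_{1,1}=0$, and sum over the rank of the block; your derivation of the initial condition $\matz(n,0,r)=\binom{n}{r}_q\prod_{i=0}^{r-1}(q^n-q^i)$ and its rewriting also matches the standard count the paper cites. The outline is correct, with the only remaining work being the explicit bucket counts you yourself flag as routine.
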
 

\begin{proof} 
  Let $B$ be any (fixed) $n \times n$ matrix of rank $r$ whose first
  $k$ diagonal entries are zero.  There are $q^{2n + 1}$ $(n + 1)
  \times (n + 1)$ matrices of the form $A = \begin{bmatrix} a & u \\ v
    & B \end{bmatrix}$ where $a$ is an element of $\F_q$, $u$ is a row
  vector over $\F_q$ of length $n$, and $v$ is a column vector over
  $\F_q$ of length $n$.  We seek to enumerate these matrices by rank,
  taking into account whether or not $a = 0$.  Afterwards, we will sum
  over all matrices $B$ to arrive at the desired recursion.  Observe
  that the number of matrices $A$ of rank $r'$ summed over all such
  $B$, $a$, $u$ and $v$ is $\matz(n + 1, k, r')$, while the number of
  these matrices where in addition we require $a = 0$ is $\matz(n + 1,
  k + 1, r')$.

  As a simplifying step, we show that the number of such $A$ for a
  fixed $B$ depends only on $n$ and $r$. Since $B$ is of rank $r$,
  there are some matrices $X, Y \in \GL(n,q)$ such that $XBY =
  \diag(1^r, 0^{n - r})$.
  It follows that for any matrix $A$ of the form
\[
A = \begin{bmatrix} a & u \\ v & B \end{bmatrix}
\]
we have for $D = \diag(1^r, 0^{n - r})$ that
\[
\begin{bmatrix} 1 & 0 \\ 0 & X \end{bmatrix} \cdot A \cdot \begin{bmatrix} 1 & 0 \\ 0 & Y \end{bmatrix} = \begin{bmatrix} a & uY \\ Xv & D \end{bmatrix},
\]
where by an abuse of notation we use the symbol $0$ to represent both
the all-zero column and row vector of length $n$.  Since $X$ and $Y$
are invertible and we are interested in what happens as $u$ and $v$
vary over all possible $n$-vectors, this computation reduces our
problem to considering the case that $B = D = \diag(1^r, 0^{n - r})$,
regardless of the value of $k$. Thus, we define $\matz(r; n + 1, r',
*)$ to be the number of $(n + 1) \times (n + 1)$ matrices $A$ of rank
$r'$ associated to the matrix $B$ of rank $r$, and we define $\matz(r;
n + 1, r', 0)$ to be the number of such matrices where in addition $a
= 0$.

Observe that when we ``glue'' an extra row or column to a matrix, we increase its rank by either $1$ or $0$.  Since every matrix of the form that interests us arises by gluing one row and one column to $B$, we have that the rank of $A$ is one of $r$, $r + 1$ and $r + 2$.  We now separately consider these three cases.

\begin{description}
\item[Case 1:] $\rk(A)=r$.  In order to have $\rk A = r$, all the entries of $u$ and $v$ after the first $r$ must be equal to zero, i.e., $u=(u_1,\ldots,u_r,0,\ldots,0)$ and $v=(v_1,\ldots,v_r,0,\ldots,0)^T$.  In addition, if we apply Gaussian elimination and use the entries of $B$ to eliminate $u$ and $v$, the $(1, 1)$-entry in the result is $a - uBv = a - \sum_{i=1}^r u_iv_i$, and this entry must be equal to $0$.  Conversely, whenever $a$, $u$ and $v$ satisfy these conditions we have $\rk A = r$.  Thus we have in this case the following conclusions:
\begin{compactenum}[(i)]
\item If we do not restrict $a$ to be zero, the total number of matrices $A$ is $q^{2r}$.  In other words, $\matz(r; n + 1, r, *) = q^{2r}$.

\item Under the additional restriction $a = 0$, either $u=0$ and $v$
  is arbitrary or $u\neq 0$ and $v$ is orthogonal to $u$, so the total
  number of matrices $A$ is $q^r + (q^r - 1)q^{r - 1} = q^{2r - 1} +
  q^r - q^{r - 1}$.  In other words, $\matz(r; n + 1, r, 0) = q^{2r -
    1} + q^r - q^{r - 1}$.
\end{compactenum}

The above two pieces of information imply the equation
\begin{equation}\label{gr:rec1}
\matz(r; n+1,r,0) = \frac{1}{q} \matz(r; n+1,r,*) + q^r - q^{r-1}.
\end{equation}

\item[Case 2:] $\rk(A)=r+2$. In order to have $\rk(A) = r + 2$, both
  $u$ and $v$ must have a nonzero entry among their last $n-r$
  entries.  This is also clearly sufficient, regardless of the value
  of $a$.  Thus, we have in this case the following conclusions:
  \begin{compactenum}[(i)]
  \item If we do not restrict $a$ to be zero, the total number of
    matrices $A$ is $q \cdot q^{2r} \cdot (q^{n - r} - 1)^2 = q(q^n -
    q^r)^2$.  In other words, $\matz(r; n + 1, r + 2, *) = q(q^n -
    q^r)^2$.
  \item Under the additional restriction $a = 0$, the number of
    matrices $A$ is $(q^n - q^r)^2$.  In other words, $\matz(r; n + 1,
    r + 2, 0) = (q^n - q^r)^2$.
  \end{compactenum}

  The above two pieces of information imply the equation
  \begin{equation} \label{gr:rec2} \matz(r; n+1,r+2,0) = \frac{1}{q}
    \matz(r; n+1,r+2,*).
  \end{equation}

\item[Case 3:] $\rk(A) = r + 1$.  The case $\rk(A) = r + 1$ consists
  of all possible choices of $a$, $u$ and $v$ that do not fall into
  either of the preceding cases.  Thus, we have the following
  conclusions:
  \begin{compactenum}[(i)]
  \item If we do not restrict $a$ to be zero, the total number of
    matrices $A$ is 
    \[
    q^{2n + 1} - q^{2r} - q(q^n - q^r)^2 = 2q^{n+r+1} - q^{2r+1} -
    q^{2r}.
    \]
    In other words, $\matz(r; n + 1, r + 1, *) = 2q^{n+r+1} - q^{2r+1}
    - q^{2r}$.
  \item Under the additional restriction $a = 0$, the number of
    matrices $A$ is 
    \[
    q^{2n} - (q^{2r - 1} + q^r - q^{r - 1}) - (q^n - q^r)^2 = 2q^{n+r}
    - q^{2r} - q^{2r-1} - q^r +q^{r-1}.
    \]
    In other words, $\matz(r; n + 1, r + 1, 0) = 2q^{n+r} - q^{2r} -
    q^{2r-1} - q^r +q^{r-1}$.
\end{compactenum}

The above two pieces of information imply the equation
\begin{equation} \label{gr:rec3} \matz(r; n+1,r+1,0) = \frac{1}{q}
  \matz(r; n+1,r+1,*)-q^r + q^{r-1}.
\end{equation}

\end{description}

We now change our perspective and consider the set of all $(n + 1)
\times (n + 1)$ matrices $A$ of rank $r + 1$ whose first $k + 1$
diagonal entries are equal to $0$.  Parametrizing these matrices by
the $n \times n$ submatrix $B$ that results from removing the first
row and first column, we have
\begin{equation} \label{greta:pfreceqA}
\matz(n + 1, k + 1, r + 1) = \sum_{B} \matz(\rk(B); n + 1, r + 1, 0)
\end{equation}
where the sum is over all $n \times n$ matrices $B$ whose first $k$ diagonal entries are zero.  The summands on the right are zero unless $\rk(B) \in \{r - 1, r, r + 1\}$, and so splitting the right-hand side according to the rank of $B$ gives
\begin{multline} \label{greta:pfreceqB}
\matz(n + 1, k + 1, r + 1) = \matz(n, k, r - 1) \cdot \matz(r - 1; n + 1, r + 1, 0) + \\ 
+ \matz(n, k, r + 1) \cdot \matz(r + 1; n + 1, r + 1, 0) + \matz(n, k, r) \cdot \matz(r; n + 1, r + 1, 0).
\end{multline}
Now we may substitute from Equations \eqref{gr:rec1}, \eqref{gr:rec2}, and \eqref{gr:rec3} and collect the terms with coefficient $\frac{1}{q}$ to conclude that 
\[
\matz(n+1,k+1,r+1) = \frac{1}{q}\matz(n+1,k,r+1) + (q^{r+1}-q^r)\matz(n,k,r+1) - (q^r-q^{r-1})\matz(n,k,r),
\]
as desired.  

The initial values for this recursion are the numbers $\matz(n, 0, r)$ of $n \times n$ matrices of rank $r$ with no prescribed values.  A simple counting argument (see, for example, \cite[Section 1.7]{mor}) gives that the number of these is $\displaystyle \matz(n,0,r) = \frac{q^{\binom{r}{2}}(q-1)^r}{[r]_q!}\left(\prod_{i = 0}^{r - 1}[n - i]_{q}\right)^2$.
\end{proof}

The preceding recursion works by reducing the number of zeroes required to lie on the diagonal.  However, we can easily modify the proof to work only with matrices of all-zero diagonal. 

\begin{corollary} \label{prop:Greta's recursion} For $r \ge 0$, the number $\greta(n,r)$ of $n \times n$ matrices over $\F_q$ of rank $r$ and with zero diagonal satisfies the recursion
  \begin{multline*}
    \greta(n+1,r+1) = (q^n - q^{r-1})^2 \greta(n, r - 1)  + (q^{2r+1} + q^{r+1} - q^r) \greta(n, r + 1)  \\
     + (2q^{n+r} - q^{2r} - q^{2r-1} - q^r + q^{r-1}) \greta(n,r) 
  \end{multline*}
with initial conditions $\greta(n,0) = 1$, $\greta(n, -1) = 0$ and $\greta(1,1) = 0$.
\end{corollary}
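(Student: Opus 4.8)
The plan is to specialize Proposition~\ref{prop:Greta's recursion with k zeroes} to the case of matrices with all diagonal entries zero, that is, to take $k = n$ there. Setting $g(n,r) = \matz(n,n,r)$, the recursion of the proposition relates $\matz(n+1,k+1,r+1)$ to $\matz(n+1,k,r+1)$ and to quantities on $n \times n$ matrices, so it is not immediately closed on the diagonal of the array $(n,k) \mapsto \matz(n,k,r)$. To close it, I would re-examine the proof of that proposition: the three case equations \eqref{gr:rec1}, \eqref{gr:rec2}, \eqref{gr:rec3} together with \eqref{greta:pfreceqB} already express $\matz(n+1,k+1,r+1)$ as a combination of the $\matz(n,k,\cdot)$ and the universal local counts $\matz(r;n+1,r+1,0)$, etc. Taking $k = n$ makes $\matz(n,n,\cdot) = g(n,\cdot)$ and $\matz(n+1,n+1,r+1) = g(n+1,r+1)$, so I would simply substitute the closed-form values of $\matz(r-1;n+1,r+1,0) = (q^n - q^{r-1})^2$, $\matz(r+1;n+1,r+1,0) = \matz(r+1;n+1,r+1,0)$ and $\matz(r;n+1,r+1,0) = 2q^{n+r} - q^{2r} - q^{2r-1} - q^r + q^{r-1}$ (the first from Case~2(ii), the last from Case~3(ii), and the middle from Case~1(ii) with the rank indices shifted appropriately) directly into \eqref{greta:pfreceqB} with $k = n$.

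Concretely, the middle coefficient needs a small computation: $\matz(r+1;n+1,r+1,0)$ counts $(n+1)\times(n+1)$ matrices of rank $r+1$ built atop a rank-$(r+1)$ block with upper-left entry zero, which by the Case~1 analysis (applied with $r$ replaced by $r+1$) equals $q^{2(r+1)-1} + q^{r+1} - q^r = q^{2r+1} + q^{r+1} - q^r$. Plugging the three local counts into \eqref{greta:pfreceqB} specialized at $k=n$ yields exactly
\[
g(n+1,r+1) = (q^n - q^{r-1})^2 g(n,r-1) + (q^{2r+1} + q^{r+1} - q^r) g(n,r+1) + (2q^{n+r} - q^{2r} - q^{2r-1} - q^r + q^{r-1}) g(n,r),
\]
which is the claimed recursion. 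The initial conditions are then immediate: $g(n,0) = \matz(n,n,0) = 1$ since the only rank-$0$ matrix is the zero matrix (which has zero diagonal); $g(n,-1) = 0$ as a convention since there are no matrices of negative rank; and $g(1,1) = 0$ because the only $1\times 1$ matrix with zero diagonal is $[0]$, which has rank $0$, not $1$.

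The main point requiring care — and the only real obstacle — is bookkeeping the rank shifts in the three local quantities: in Proposition~\ref{prop:Greta's recursion with k zeroes} the "Case~$i$" counts are written as $\matz(r;n+1,r+j,0)$ for a block of rank $r$, so when assembling $g(n+1,r+1)$ via \eqref{greta:pfreceqB} one must correctly pair $\matz(n,n,r-1)$ with the Case~2 count for a rank-$(r-1)$ block (giving rank $r+1 = (r-1)+2$), $\matz(n,n,r+1)$ with the Case~1 count for a rank-$(r+1)$ block, and $\matz(n,n,r)$ with the Case~3 count for a rank-$r$ block. Once these substitutions are lined up, the coefficients are read off verbatim from the Case analysis in the proof of Proposition~\ref{prop:Greta's recursion with k zeroes}, and nothing further needs to be proven. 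I would present this as a short derivation rather than a from-scratch argument, explicitly citing the relevant case counts.
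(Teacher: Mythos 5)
Your proposal is correct and takes essentially the same route as the paper: its own proof likewise reuses the setup of Proposition~\ref{prop:Greta's recursion with k zeroes}, invokes Equation~\eqref{greta:pfreceqB} with all diagonal entries of $B$ equal to zero (your $k=n$ specialization), and substitutes the three case-(ii) counts $\matz(r-1;n+1,r+1,0)=(q^n-q^{r-1})^2$, $\matz(r+1;n+1,r+1,0)=q^{2r+1}+q^{r+1}-q^r$, and $\matz(r;n+1,r+1,0)=2q^{n+r}-q^{2r}-q^{2r-1}-q^r+q^{r-1}$, with the $r=0$ case handled via $\greta(n,-1)=0$ just as you do. (The tautological line for the middle coefficient in your first paragraph is only a slip, corrected by the computation in your second paragraph.)
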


\begin{proof} 
  We use the same setup as in Proposition~\ref{prop:Greta's recursion
    with k zeroes}. That is, we let $B$ be any (fixed) $n \times n$
  matrix of rank $r$ whose all diagonal entries are zero.  There are
  $q^{2n}$ $(n + 1) \times (n + 1)$ matrices of the form $A
  = \begin{bmatrix} 0 & u \\ v & B \end{bmatrix}$ where $u$ is a row
  vector over $\F_q$ of length $n$, and $v$ is a column vector over
  $\F_q$ of length $n$.  We seek to enumerate these matrices by
  rank. Observe that the number of matrices $A$ of rank $r'$ summed
  over all such $B$, $u$ and $v$ is $\greta(n,r')$.

  By the same simplifying step as in the proof of
  Proposition~\ref{prop:Greta's recursion with k zeroes}, it is enough to count matrices of the form $\begin{bmatrix} 0
    & u \\ v & D \end{bmatrix}$ where case where $D = \diag(1^r, 0^{n - r})$.  We re-use the notation $\matz(r; n + 1, r', 0)$ from the proof
  of Proposition~\ref{prop:Greta's recursion with k zeroes} for the
  number of such matrices.
  
  By Equation \eqref{greta:pfreceqB} we have for $r \ge 1$ the recursion
\begin{multline*}
  \greta(n+1,r+1)= \greta(n, r - 1) \cdot \matz(r - 1; n + 1, r + 1,
  0) \\+ \greta(n, r + 1) \cdot \matz(r + 1; n + 1, r + 1, 0)+
  \greta(n,r) \cdot \matz(r; n + 1, r + 1, 0).
\end{multline*}
 When $r = 0$, the same recursion holds if we interpret $\greta(n, -1) = 0$.

 We obtain the desired result by substituting the formulas for
 $\matz( \,\cdot\, ; \,\cdot\, , \,\cdot\, , 0)$ from the conclusions (ii) of the three
 cases in the proof of Proposition \ref{prop:Greta's recursion with k zeroes}.
\end{proof}

\begin{corollary}
For $n \geq r \geq 0$, the number $g(n, r) = \matz(n, n, r)$ of $n \times n$ matrices over $\F_q$ of rank $r$ with zero diagonal is
\[
g(n, r) = (q - 1)^r \sum_{k = 0}^{n - r} \sum_{i = 0}^n (-1)^{k + r + n + i} q^{\binom{n}{2} + \binom{k}{2} - nk - r} \binom{n}{i} \frac{[n + k - i]_q!}{([k]_q!)^2 \cdot [n - r - k]_q!}.
\]
\end{corollary}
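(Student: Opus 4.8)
The plan is to solve the rank recursion from Corollary~\ref{prop:Greta's recursion} (the one for $g(n,r)$, the number of $n\times n$ matrices of rank $r$ with zero diagonal) by an explicit guess-and-verify argument. First I would observe that the stated closed form already carries an overall factor $(q-1)^r$, as predicted by Proposition~\ref{RL:qprop}, so it is natural to write $g(n,r) = (q-1)^r\,h(n,r)$ and rewrite the three-term recursion of Corollary~\ref{prop:Greta's recursion} in terms of $h$. Dividing through by $(q-1)^{r+1}$ turns each coefficient $(q^{r+1}-q^r)$, $(q^r-q^{r-1})$, etc.\ into something with at most one explicit factor of $q-1$ cancelled, so the recursion for $h$ should have polynomial-in-$q$ coefficients that are more transparent. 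The goal is then to verify that the double sum
\[
h(n,r) = \sum_{k=0}^{n-r}\sum_{i=0}^{n}(-1)^{k+r+n+i}\,q^{\binom{n}{2}+\binom{k}{2}-nk-r}\binom{n}{i}\frac{[n+k-i]_q!}{([k]_q!)^2\,[n-r-k]_q!}
\]
satisfies both the recursion and the initial conditions $h(n,0)=1$, $h(n,-1)=0$, $h(1,1)=0$.

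The main technical work is the induction step. I would proceed by induction on $n$ (with the rank $r$ ranging freely), assuming the formula for $h(n,r-1)$, $h(n,r)$, $h(n,r+1)$ and deriving $h(n+1,r+1)$. Substituting the three instances of the double sum into the right-hand side of the recursion produces a triple-indexed expression in $q$; the task is to reorganize it, via shifts in the summation indices $k$ and $i$ and the Pascal-type identity $\binom{n}{i}+\binom{n}{i-1}=\binom{n+1}{i}$ for $q$-free binomials (note the outer binomials here are \emph{ordinary}, not Gaussian, which simplifies matters), into the claimed double sum for $h(n+1,r+1)$. The $q$-factorial ratios $[n+k-i]_q!/([k]_q!)^2[n-r-k]_q!$ will need to be matched up across the three terms by absorbing factors like $[n+1-i]_q$, $[n-r-k]_q$, and powers of $q$ coming from the coefficients $q^{2r+1}+q^{r+1}-q^r$, $(q^n-q^{r-1})^2$, and $2q^{n+r}-q^{2r}-q^{2r-1}-q^r+q^{r-1}$; the exponent bookkeeping with $\binom{n}{2}$, $\binom{k}{2}$, and $nk$ under the shift $n\mapsto n+1$, $k\mapsto k$ or $k\mapsto k-1$, is where errors are easiest to make, so I would track the powers of $q$ separately from the factorial parts.

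An alternative, possibly cleaner, route is to avoid the recursion entirely and instead combine Proposition~\ref{prop:Greta's recursion with k zeroes} at $k=0$ (which gives the closed form for $\matz(n,0,r)$) with an inclusion–exclusion over which diagonal entries are zero: writing $g(n,r)=\matz(n,n,r)$ as an alternating sum over subsets of the diagonal of the counts of rank-$r$ matrices with a \emph{prescribed nonzero} pattern on a complementary set of diagonal positions. One would need a formula for the number of rank-$r$ matrices with a specified subset of diagonal entries equal to fixed nonzero values; by the $\GL$-symmetry argument already used in the proof of Proposition~\ref{prop:Greta's recursion with k zeroes}, such a count depends only on the size of the subset, and summing a geometric-type series over the nonzero values should reproduce the inner sum over $k$. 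I expect the binomial $\binom{n}{i}$ and the sign $(-1)^{n+i}$ to come from this inclusion–exclusion, while the sum over $k$ and the factor $1/([k]_q!)^2$ come from the rank-counting formula for $\matz(n,0,r)$ with $n$ replaced by $n-$(something). The hard part either way is the same: matching the $q$-power exponents exactly, and I would expect to spend most of the verification confirming that $\binom{n}{2}+\binom{k}{2}-nk-r$ is the correct exponent by checking small cases ($n=1,2$ and $r=0,1$) against the recursion before committing to the general manipulation.
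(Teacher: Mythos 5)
Your plan is essentially the paper's proof: the paper likewise establishes the closed form by a guess-and-verify induction, checking that the stated formula satisfies the recursion of Corollary \ref{prop:Greta's recursion}. The only minor difference is the anchoring: the paper takes as base cases $r=n+1$ (trivially zero, empty sum) and $r=n$ (which reduces to Proposition \ref{RL:prop2}), thereby sidestepping the separate identity that the double sum equals $1$ at $r=0$, which your induction on $n$ would have to verify directly.
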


\begin{proof}
Induct, taking as base cases $r = n + 1$ and $r = n$.  (The former is
trivial and the latter easily reduces to Proposition \ref{RL:prop2}.) Then check that the formula above satisfies the recursion of Corollary \ref{prop:Greta's recursion}.
\end{proof}

\section{Symmetric and skew-symmetric matrices} \label{sec:skewsym}

A natural next step is to consider symmetric matrices, which are (at least morally) a $q$-analogue of involutions, suggesting the possibility of interesting combinatorial results. This also brings us closer to a speculation by Kontsevich (see Section~\ref{sec:polynom}). In this section, we begin by enumerating symmetric invertible matrices over $\F_q$ whose diagonal is all zero, a $q$-analogue of involutions with no fixed points.  This leads to two very unintuitive facts: in Section~\ref{sec:symmetriczero}, we show that the number of these matrices of size $2n$ is the same as the number of invertible symmetric matrices of size $2n-1$;  in Section~\ref{sec:curious} and Section~\ref{sec:curious2}, we show in two different ways that both of these numbers equal the number of invertible skew-symmetric matrices of the size $2n$.

While extending the approach of Section \ref{rickyrec} to the case of symmetric matrices seems impossible, the ideas of Section \ref{sec:Greta's recursion} can be adjusted to work in this context.  The major complicating factor is that the bilinear form $u B v$ that we worked with implicitly in Section \ref{sec:Greta's recursion} must be replaced with the quadratic form $v B v^T$.  Quadratic forms behave very differently in even and odd characteristic, so we give the following \emph{proviso}:

\begin{remark}
The proofs in this section are only valid for $q$ odd, unless otherwise noted.
\end{remark}

Of course, some results still hold when $q$ is even: for example,
Proposition~\ref{prop:curious} is equivalent to
Theorem~\ref{thm:clover} for the silly reason that in even
characteristic, the skew-symmetric invertible matrices are exactly the
symmetric invertible matrices with zeroes on the diagonal. For a more
thorough treatment of the case $q$ even, see \cite{macwilliams} and
\cite{spt}.

\subsection{Symmetric matrices with zeroes on the
  diagonal} \label{sec:symmetriczero} 

Let $\Sym_0(n)$ denote the set of $n \times n$ symmetric matrices in $\GL(n, q)$ with zero diagonal and let $\sym_0(n) = \# \Sym_0(n)$.  Similarly, let $\Symz(n, k)$ be the set of $n \times n$ symmetric matrices in $\GL(n, q)$ whose first $k$ diagonal entries are zero and let $\symz(n, k) = \# \Sym_0(n, k)$, so $\Symz(n, n) = \Sym_0(n)$.

In \cite[Theorem 2]{macwilliams}, MacWilliams shows

\begin{theorem*} The number of symmetric invertible matrices (for any characteristic) is
  \begin{equation} \label{eqn:invsymmetric} 
 \sym(n) = q^{\binom{n+1}{2}} \prod_{j=1}^{\lceil n/2 \rceil} (1-q^{1-2j}).
\end{equation}
\end{theorem*}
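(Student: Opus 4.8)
The plan is to prove the formula by induction on $n$, via a recursion expressing $\sym(n)$ in terms of $\sym(n-1)$ and $\sym(n-2)$ obtained by peeling off the first row and column. Write a symmetric matrix as $A = \begin{bmatrix} a & u^T \\ u & B \end{bmatrix}$ with $a \in \F_q$, $u$ a column vector in $\F_q^{n-1}$, and $B$ an $(n-1)\times(n-1)$ symmetric matrix, and split the count of invertible such $A$ according to whether $a \neq 0$ or $a = 0$.

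When $a \neq 0$, the Schur complement identity $\det A = a \det(B - \tfrac{1}{a}uu^T)$ shows that $A$ is invertible exactly when the symmetric matrix $B - \tfrac{1}{a}uu^T$ is; since for fixed $a,u$ the assignment $B \mapsto B - \tfrac{1}{a}uu^T$ is a bijection on symmetric $(n-1)\times(n-1)$ matrices, this case contributes $(q-1)\,q^{n-1}\,\sym(n-1)$.

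When $a = 0$, the matrix $A$ can be invertible only if $u \neq 0$, and I claim the number $C$ of symmetric $B$ making $A$ invertible is then independent of $u$: replacing $A$ by $\diag(1,h)^T A\,\diag(1,h)$ for $h \in \GL(n-1,q)$ sends $(u,B)$ to $(h^Tu,\,h^TBh)$, preserves invertibility, and the map $h \mapsto h^T u$ already realizes every nonzero vector. Taking $u = (1,0,\dots,0)^T$ and writing $B = \begin{bmatrix} b & c^T \\ c & B' \end{bmatrix}$, the leading $2\times 2$ block of $A$ is $\begin{bmatrix} 0 & 1 \\ 1 & b \end{bmatrix}$, and a short computation shows that its Schur complement in $A$ is exactly $B'$; hence $A$ is invertible iff $B'$ is, which gives $C = q^{n-1}\,\sym(n-2)$ and a contribution of $(q^{n-1}-1)\,q^{n-1}\,\sym(n-2)$. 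Adding the two cases,
\[
\sym(n) = (q-1)\,q^{n-1}\,\sym(n-1) + q^{n-1}(q^{n-1}-1)\,\sym(n-2) \qquad (n \geq 2),
\]
with $\sym(0) = 1$ and $\sym(1) = q-1$. Both reductions hold in every characteristic, matching the scope of the theorem.

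It then remains to check that the claimed closed form satisfies this recursion and the base cases. Using $\binom{n+1}{2} - \binom{n}{2} = n$ and $\binom{n+1}{2} - \binom{n-1}{2} = 2n-1$, and treating $n$ even and $n$ odd separately (the parity only governs whether $\lceil n/2 \rceil$ exceeds $\lceil (n-1)/2 \rceil$, i.e.\ whether an extra factor $1 - q^{1-n}$ appears), this is a routine identity among products of the factors $1 - q^{1-2j}$. The main obstacle is the $a = 0$ case: one must recognize that it reduces to $\sym(n-2)$ rather than $\sym(n-1)$, which needs both the transitivity argument that makes $C$ independent of $u$ and the cancellation making the Schur complement of the hyperbolic corner equal to $B'$ exactly.
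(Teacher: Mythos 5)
Your proof is correct, and it is worth noting that the paper does not actually prove this statement at all: it is quoted from MacWilliams [Mac, Theorem 2], so what you have produced is an independent, self-contained derivation. Both of your reductions are sound. In the $a\neq 0$ case the Schur-complement bijection $B\mapsto B-\tfrac1a uu^T$ on symmetric matrices gives the factor $(q-1)q^{n-1}\sym(n-1)$; in the $a=0$ case the congruence action $(u,B)\mapsto(h^Tu,h^TBh)$ does make the count independent of $u\neq 0$, and with $u=e_1$ the correction term $Q^TP^{-1}Q$ for the hyperbolic corner $P=\left[\begin{smallmatrix}0&1\\1&b\end{smallmatrix}\right]$ really does vanish, so $A$ is invertible iff $B'$ is, giving $q^{n-1}\sym(n-2)$ for each of the $q^{n-1}-1$ choices of $u$. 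Everything uses only the invertibility of $a$ and of $P$ (whose determinant $-1$ is a unit even in characteristic $2$), so the argument is uniform in the characteristic, matching the scope of the theorem. The step you label routine also checks out: writing $P_m=\prod_{j=1}^m(1-q^{1-2j})$, the recursion
\[
\sym(n) = (q-1)q^{n-1}\sym(n-1) + q^{n-1}(q^{n-1}-1)\sym(n-2)
\]
reduces for $n=2m$ to $(1-q^{-1})P_m+(q^{-1}-q^{-2m})P_{m-1}=P_m$ and for $n=2m+1$ to $(1-q^{-1})+(q^{-1}-q^{-2m-1})=1-q^{1-2(m+1)}$, both immediate. For comparison, MacWilliams' original proof counts symmetric matrices of each rank via row spaces and the classification of symmetric forms, while your peel-off-the-first-row-and-column recursion is closer in spirit to the techniques the paper itself uses elsewhere (e.g.\ the recursion $\sk(n)=q^{n-2}(q^{n-1}-1)\sk(n-2)$ for skew-symmetric matrices in Section 3.2), so your route is both more elementary than the cited source and consonant with the paper's methods.
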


We now show that when $n$ is even, $\sym(n-1)$ also counts $n\times n$ symmetric invertible matrices with zero diagonal. Observe that this implies that $\sym_0(n)$ is an enumerative $q$-analogue of $(n-1)!!$, the number of fixed point-free involutions in $\mathfrak{S}_n$.

\begin{theorem} \label{thm:clover} When $n$ is even, the number of $(n
  - 1) \times (n - 1)$ symmetric invertible matrices is equal to the
  number of $n \times n$ symmetric invertible matrices with zero
  diagonal, i.e., $\sym(n - 1) = \sym_0(n)$.
\end{theorem}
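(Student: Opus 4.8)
The plan is to adapt the recursive "gluing" strategy of Section~\ref{sec:Greta's recursion} to the symmetric setting, producing a recursion that relates $\symz(n,k)$ and $\symz(n,k-1)$, and then to exploit the fact that adding one prescribed zero to the diagonal behaves very much like decreasing the size by one. Concretely, write an $n \times n$ symmetric matrix whose first $k$ diagonal entries vanish in block form $\begin{bmatrix} 0 & v^T \\ v & B \end{bmatrix}$, where $B$ is $(n-1)\times(n-1)$ symmetric with its first $k-1$ diagonal entries zero. Since we are in odd characteristic, we may conjugate $B$ by $X \in \GL(n-1,q)$ so that $X B X^T$ is diagonal, and this conjugation (applied simultaneously to the bordered matrix) shows that the count of invertible completions depends only on $\rk B$ and on whether $B$ is invertible; I would carry out this normalization first, reducing to $B$ a diagonal matrix of $1$'s and $0$'s (up to squares, which in odd characteristic means at most one entry is a fixed nonsquare $\varepsilon$). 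Then I would count, for each value of $\rk B$, the number of vectors $v$ making the bordered matrix invertible and having the prescribed zero in the $(1,1)$ slot — this is exactly a count of solutions to a quadratic equation $v B v^T = 0$ with side conditions, i.e. the analogue of Lemma~\ref{Bru:lem1}'s quadratic-form bookkeeping, except here the relevant function is a Gauss-sum/quadratic-character count rather than the simple $z,y$ of the bilinear case.

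The key structural step is to show that this recursion, together with the analogous unrestricted recursion (with $k=0$, giving the MacWilliams count $\sym(n)$ and its rank-stratified refinement), collapses so that $\symz(2n, 2n)$ equals $\sym(2n-1)$. I expect the cleanest route is to prove by induction on the number $k$ of prescribed zeros the identity $\symz(n,k) = q^{?}\,\sym(n-1)\cdot(\text{something})$ — more precisely, to guess and verify a closed relationship of the form $\symz(n,k)$ is a fixed $q$-power multiple of a count of invertible symmetric matrices of smaller size with some zeros prescribed, so that after peeling off all $n$ diagonal constraints one lands exactly on $\sym(n-1)$. The parity hypothesis ($n$ even) enters because the quadratic-form counts that appear in the recursion have different leading behavior depending on the discriminant/Arf-type invariant, and only for $n$ even do the "+" and "$-$" type contributions combine to give the clean MacWilliams-style product; for $n$ odd there is a leftover term, which is precisely why Section~\ref{sec:symmetricrank} treats that case separately and finds it "significantly more complicated."

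An alternative, possibly slicker, approach would be to give a direct (near-)bijection: given an $n \times n$ invertible symmetric matrix with zero diagonal, $n$ even, use the first row/column to pivot and exhibit the lower-right $(n-1)\times(n-1)$ block, or a controlled modification of it, as an arbitrary invertible symmetric $(n-1)\times(n-1)$ matrix, tracking exactly how many choices of the border vector $v$ correspond to each target; if that fiber count is constant and equal to $1$ after the right normalization, one gets the equality on the nose. I would try this bijective version in parallel, since it may also clarify the later equality with skew-symmetric matrices in Sections~\ref{sec:curious} and~\ref{sec:curious2}. The main obstacle in either approach is controlling the quadratic-form solution counts uniformly across all ranks of $B$ and both discriminant types — getting the case analysis ($\rk B$ even vs. odd, $B$ "split" vs. "nonsplit", the $(1,1)$-entry forced to zero) to aggregate into a single clean recursion is where the real work lies, and it is the step most likely to hide an off-by-a-$q$-power error, so I would double-check it against small cases ($n=2,4$) numerically before trusting the induction.
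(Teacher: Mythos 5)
Your plan heads in the right general direction (strip off the prescribed diagonal zeros one at a time and compare the end result with MacWilliams' formula \eqref{eqn:invsymmetric}), but it has a genuine gap at exactly the point you flag as ``where the real work lies'': you never establish the quantitative fact that makes the whole thing collapse, namely that for $n$ even each additional prescribed diagonal zero divides the count by \emph{exactly} $q$, i.e.\ $\symz(n,k) = q\,\symz(n,k+1)$, hence $\sym_0(n)=\symz(n,n)=q^{-n}\sym(n)$, which by a two-line computation with \eqref{eqn:invsymmetric} equals $\sym(n-1)$. Moreover, the main route you propose --- normalizing the $(n-1)\times(n-1)$ block $B$ by rank and discriminant type and aggregating the quadratic-form counts $\sq^{\pm}$ over all ranks into a recursion in $k$ --- is precisely the machinery of Section~\ref{sec:symmetricrank} (Proposition~\ref{symm:recranknumzeroes}), which the paper is \emph{unable} to solve in closed form in general; leaning on it for the even case is taking the hard road, and ``guess and verify a closed relationship'' is not yet a proof since no candidate identity is produced or verified.

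The idea that actually closes the gap (and which your ``alternative, slicker'' paragraph comes close to but does not execute) is the fiber count in the case where $B$ is invertible. Writing $A=\begin{bmatrix}\alpha & v\\ v^T & B\end{bmatrix}$: if $B$ is singular, $\det A$ is independent of $\alpha$, so setting $\alpha=0$ is an exact $q$-to-$1$ map and no quadratic-form analysis is needed at all. If $B$ is invertible, the completions are parametrized by $c$ with $v=c^TB$, invertibility amounts to $\alpha\neq c^TBc$, and the whole question reduces to showing that $c^TBc=0$ for exactly $q^{n-2}$ of the $q^{n-1}$ choices of $c$ --- \emph{uniformly in} $B$, with no dependence on its discriminant class. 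This is where the parity of $n$ enters, and in a sharper way than the ``$+$ and $-$ contributions combine'' heuristic you offer: since $n$ is even, $\det\bigl((\det B)B\bigr)=(\det B)^n$ is a square, so $(\det B)B=MM^T$ for some $M\in\GL(n-1,q)$, and the count becomes the number of isotropic vectors of the standard form $d^Td$ in $n-1$ variables; because $n-1$ is \emph{odd}, this number is $q^{n-2}$ regardless of type (Table~\ref{table:sq}, \cite[Thm 1.26]{wan}). Without this observation (or an equivalent one), your recursion does not visibly telescope, and the induction you hope to run has no verified base or step; with it, the rank-stratified apparatus you propose is unnecessary. This is exactly the content of Lemma~\ref{lemma:numberzeroes} in the paper.
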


Note that the case for $q$ even was done by MacWilliams \cite[Theorems 2, 3]{macwilliams} (see also \ref{mac:qeveneq2}). To prove this for $q$ odd, we will use a lemma.  Trivially, a $q^{-k}$-fraction of
all matrices have first $k$ diagonal entries equal to zero.  Naively,
one might expect this to carry over approximately to other classes of
matrices, e.g., one might guess that about $q^{-k}$ of all matrices in
$\Sym(n) = \Symz(n, 0)$ belong to $\Symz(n, k)$.  Our lemma shows
that, remarkably, this estimate is actually exact when $n$ is even.

\begin{lemma} \label{lemma:numberzeroes} 
 When $n$ is even, $q^k \symz(n, k) = \symz(n, 0)$.
\end{lemma}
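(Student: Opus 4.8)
The plan is to prove the one-step identity $q\,\symz(n,k+1)=\symz(n,k)$ for $0\le k\le n-1$; iterating it gives $\symz(n,0)=q\,\symz(n,1)=\cdots=q^{k}\,\symz(n,k)$, which is the lemma. First note that simultaneously permuting the rows and columns of a matrix by the same permutation preserves symmetry, invertibility, and the multiset of diagonal entries, so the number of symmetric matrices in $\GL(n,q)$ with a prescribed set of $k$ diagonal entries equal to $0$ is independent of which $k$ positions are chosen. Hence $\symz(n,k)$ counts symmetric $A\in\GL(n,q)$ with $A_{22}=\cdots=A_{k+1,k+1}=0$, while $\symz(n,k+1)$ counts those with in addition $A_{11}=0$. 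Write such an $A$ in block form
\[
A=\begin{bmatrix} a & w\\ w^{T} & C\end{bmatrix},
\]
with $a=A_{11}\in\F_q$, $w$ a row vector of length $n-1$, and $C$ an $(n-1)\times(n-1)$ symmetric matrix whose first $k$ diagonal entries vanish. For fixed such $C$ and $w$, let $\phi(C,w)$ be the number of $a\in\F_q$ for which $A$ is invertible. Then $\symz(n,k)=\sum_{C,w}\phi(C,w)$, whereas $\symz(n,k+1)$ is the number of pairs $(C,w)$ for which $A$ is invertible when $a=0$; so it suffices to prove, for each admissible $C$, the termwise identity $\sum_{w}\phi(C,w)=q\cdot\#\{\,w\in\F_q^{n-1}: A\text{ is invertible when }a=0\,\}$.

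Since adjoining one row and one column to $C$ raises the rank by at most $2$, $A$ can be invertible only when $\rk C\in\{n-2,n-1\}$, and for $\rk C\le n-3$ both sides vanish. If $C$ is invertible, the Schur complement shows that $A$ is invertible exactly when $a\ne wC^{-1}w^{T}$, so $\phi(C,w)=q-1$ for every $w$, giving $\sum_{w}\phi(C,w)=q^{n-1}(q-1)$; moreover $A$ with $a=0$ is invertible exactly when $wC^{-1}w^{T}\ne0$. The map $w\mapsto wC^{-1}w^{T}$ is a nondegenerate quadratic form in $n-1$ variables, and since $q$ is odd and $n-1$ is odd, the classical count of zeros of such a form gives exactly $q^{n-2}$; hence $\#\{w: wC^{-1}w^{T}\ne0\}=q^{n-1}-q^{n-2}$, and indeed $q^{n-1}(q-1)=q\,(q^{n-1}-q^{n-2})$. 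If instead $\rk C=n-2$, let $z$ span the one-dimensional kernel of $C$ (so also $z^{T}C=0$, by symmetry). Solving $Av=0$ for $v=(x,y)$ with $x\in\F_q$ and $y\in\F_q^{n-1}$ shows: when $wz\ne0$ every null vector has $x=0$, which then forces $y\in\ker C$ and hence $y=0$, so $A$ is invertible for \emph{all} $a$; when $wz=0$ the vector $v=(0,z)$ lies in $\ker A$ regardless of $a$, so $A$ is invertible for \emph{no} $a$. Thus $\phi(C,w)\in\{0,q\}$, with $\phi(C,w)=q$ precisely when $wz\ne0$, which is also precisely when $A$ with $a=0$ is invertible; therefore $\sum_{w}\phi(C,w)=q\cdot\#\{w: wz\ne0\}$, as required.

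Summing the termwise identity over all admissible $C$ gives $q\,\symz(n,k+1)=\symz(n,k)$, and telescoping over $k$ yields the lemma. The crux is the invertible-$C$ case: one needs the nondegenerate quadratic form $w\mapsto wC^{-1}w^{T}$ to have \emph{exactly} $q^{n-2}$ zeros, with no correction term. This is where the hypotheses are used in an essential way: $q$ odd allows the form to be diagonalized, and $n$ even (equivalently, $n-1$ odd) makes the number of zeros independent of the form's discriminant; for $n$ odd the zero-count would pick up a term $\pm(q^{(n-1)/2}-q^{(n-3)/2})$ and the clean factor of $q$ would be lost. The rank-$(n-2)$ analysis, while elementary, must be carried out carefully, keeping track of how the kernel vector $z$ pairs with $w$.
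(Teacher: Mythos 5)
Your proof is correct and takes essentially the same route as the paper: reduce to the one-step identity $\symz(n,k)=q\,\symz(n,k+1)$, split according to whether the lower-right $(n-1)\times(n-1)$ block is invertible, and in the invertible case use that a nondegenerate quadratic form in the odd number $n-1$ of variables (here $w\mapsto wC^{-1}w^{T}$, with $q$ odd) has exactly $q^{n-2}$ zeros. The only cosmetic differences are that you handle the singular-block case by an explicit kernel computation at rank $n-2$ where the paper simply observes that $\det A$ is independent of the corner entry when the block is singular, and you invoke the odd-variable zero count directly rather than the paper's reduction via $C=(\det B)B=MM^{T}$ to the standard form.
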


\begin{proof} It is enough to show that $\symz(n, k) = q \cdot  \symz(n, k +
  1)$. To do this, we will partition both $\Symz(n,k)$ and
  $\Symz(n,k+1)$ into two disjoint sets and construct a $q$-to-1
  mapping from each piece of $\Symz(n,k)$ to a piece of
  $\Symz(n,k+1)$. The two pieces will be defined by whether or not the
  bottom-right $(n-1) \times (n-1)$ submatrix is invertible.

  Recall that $\Symz(n,k)$ is the set of invertible symmetric $n
  \times n$ matrices whose first $k$ diagonal entries are required to
  be 0. If we only care about the cardinality of this set, then the
  actual position of the forced zero entries on the diagonal is
  irrelevant because we can permute rows and columns, so for
  convenience let's temporarily redefine $\symz(n, k)$ to be the
  number of matrices whose diagonal pattern is $(\alpha, a_1, \ldots,
  a_{n - k-1}, 0, \ldots, 0)$, where the $a_i$ are ``free'' entries
  with $k$ zeroes trailing them. So write a generic matrix in $\Symz(n,
  k)$ as $A = \begin{bmatrix} \alpha & v \\ v^T & B \end{bmatrix}$
  where $B$ is an $(n-1) \times (n-1)$ matrix whose last $k$ diagonal entries are zero.

  If $\det B = 0$ then $\det A$ does not depend on $\alpha$.  Thus, changing $\alpha$ to $0$
  results in a new matrix with the same determinant as $A$; in particular, the resulting matrix is also invertible.  
  This operation gives a $q$-to-$1$ map from $\Symz(n, k)$ to $\Symz(n, k+1)$ in the case that $B$ is not invertible.

  Otherwise, when $B$ is invertible, we want to count matrices in
  $\Symz(n, k)$ having $B$ in the bottom-right corner. We can build
  such matrices as follows. For every choice of $v$, there is a unique
  $(n-1)$-tuple $c$ such that $c^T B = v$. Then we can choose $\alpha$
  to be anything other than $c^T \cdot v^T = c^T B^T c = c^T B c$ to
  get a nonsingular matrix.

  We want to show the following: the number of matrices in
  $\Symz(n, k)$ with $B$ as the lower right corner is $q$ times the number of matrices in $\Symz(n, k)$ with $B$ in
  the lower right corner  and $\alpha
  = 0$. The latter number is $q^{n-1}(q-1)$ and the
  former number is $q^{n-1} - N$ where $N$ is the number of choices
  for $c$ such that $c^T B c=0$. Thus, what we want to show is
  equivalent to the statement that $c^T B c = 0$ for $1/q$ of the total number
  of choices for $c$, or that $c^T B c = 0$ for $q^{n-2}$ values of $c$.

  To see this, we proceed as follows. Let $C = (\det B) B$. Clearly
  $c^T C c = 0$ if and only if $c^T B c = 0$, so we can work with $C$ in place of $B$.  We have
  $\det C = (\det B)^n$, and since $n$ is even this is a square in $\F_q$. Using the
  classification of symmetric bilinear forms over fields of odd
  characteristic \cite[Theorem 1.22]{wan}, we can write $C = MM^T$ for
  some nonsingular matrix $M$. So setting $d=M^Tc$, we only need to count the number of
  $d$ such that $d^T d = 0$, but this number is
  $q^{n-2}$ since $n-1$ is odd \cite[Theorem 1.26]{wan}. 

  Thus, we have provided a $q$-to-$1$ map from a subset of $\Symz(n,
  k)$ into $\Symz(n, k+1)$ such that the complement of the
  domain has cardinality $q$ times that of the complement of the image. This proves the lemma.
\end{proof}


We use this lemma to prove the main result of this section.

\begin{proof}[Proof of Theorem~\ref{thm:clover}] 
  Let $n = 2m$.  We have 
\begin{align*}
  \symz(n,n) & = q^{-n} \symz(n,0) \\
  & = q^{-n} \sym(n) \\
  & =  q^{-2m} \prod_{i=1}^m \frac{q^{2i}}{q^{2i}-1} \prod_{i=0}^{2m-1} (q^{2m-i} -1) \\
  & = \prod_{i=1}^{m-1} \frac{q^{2i}}{q^{2i}-1} \prod_{i=0}^{2m-2}(q^{2m-1-i}-1) \\
  & = \sym(n-1),
\end{align*}
where in the first line we use Lemma~\ref{lemma:numberzeroes} and in
the rest we use Equation \eqref{eqn:invsymmetric}.
\end{proof}

\subsection{Skew-symmetric matrices} \label{sec:curious}

Let $\sk(n)$ denote the number of invertible $n \times n$
skew-symmetric matrices over $\F_q$.  It is not clear \emph{a priori} that there is any connection between these matrices and invertible symmetric matrices, but there is a curious relation between the two. In particular, we show that for $n$ even, the number of $n \times n$ invertible skew-symmetric matrices is the same as the number of $(n - 1) \times (n - 1)$ invertible symmetric matrices (and so, by Theorem~\ref{thm:clover}, also the same as the number of $n \times n$ invertible symmetric matrices with all-zero diagonal).  In addition, we explicitly count the invertible skew-symmetric matrices by rank, obtaining a $q$-analogue of involutions with a certain number of fixed points.

To begin, we prove a helpful technical lemma which states that the number of skew-symmetric matrices of a given rank is nearly independent of the first row of the matrix.

\begin{lemma} \label{lemma:skewfirstrow}
  The number of $n \times n$ skew-symmetric matrices of rank $r$ with first row $v$ is the same for all $v \neq (0, \ldots, 0)$.
\end{lemma}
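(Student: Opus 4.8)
The plan is to exploit the action of the group $\GL(n, q)$ by congruence $M \mapsto P M P^T$ on the set of $n \times n$ skew-symmetric matrices, which preserves both rank and skew-symmetry. The key observation is that if $P$ has the block form $P = \begin{bmatrix} 1 & 0 \\ w & I_{n-1} \end{bmatrix}$ for a column vector $w \in \F_q^{n-1}$, then congruence by $P$ fixes the entry $M_{1,1} = 0$ and sends the first row $(\,0, v_2, \ldots, v_n\,)$ of $M$ to itself (since the $(1, j)$-entry of $PMP^T$ for $j \geq 2$ is unchanged by adding multiples of the first row/column, which are zero in the first slot). So this particular subgroup does not help move the first row around; instead I would use congruence by a matrix of the block form $P = \begin{bmatrix} 1 & 0 \\ 0 & Q \end{bmatrix}$ with $Q \in \GL(n-1, q)$, which sends the first row $(0, v')$ to $(0, v' Q^T)$ while preserving rank and skew-symmetry. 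Since $\GL(n-1, q)$ acts transitively on the nonzero vectors $v' \in \F_q^{n-1}$, this shows that the count depends only on whether the ``tail'' $(v_2, \ldots, v_n)$ is zero or nonzero.

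That almost finishes it, but there is a gap: the first row of a skew-symmetric matrix is $(\,0, v_2, \ldots, v_n\,)$, so the above argument shows the count is constant over all $v$ with $(v_2, \ldots, v_n) \neq 0$; it does not yet rule out the possibility that $v_2 = \cdots = v_n = 0$, i.e.\ $v = (0,\ldots,0)$, gives a different count — but that case is exactly the one excluded in the statement, so in fact the argument as given suffices. Wait: I should double-check that the hypothesis $v \neq (0,\ldots,0)$ together with skew-symmetry forces $(v_2, \ldots, v_n) \neq 0$. Indeed $v_1 = M_{1,1} = 0$ automatically (the diagonal of a skew-symmetric matrix is zero, using $q$ odd so that $2 \neq 0$, or more simply $M_{ii} = -M_{ii}$), so $v \neq 0$ is equivalent to $(v_2, \ldots, v_n) \neq 0$. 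Hence the transitivity of $\GL(n-1, q)$ on nonzero vectors is exactly what is needed.

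So the steps, in order, are: (1) record that $M \mapsto P M P^T$ preserves rank and the skew-symmetry property; (2) specialize to $P = \operatorname{diag}(1, Q)$ with $Q \in \GL(n-1, q)$ and compute that the first row transforms as $(0, v') \mapsto (0, v' Q^T)$; (3) invoke transitivity of $\GL(n-1, q)$ on $\F_q^{n-1} \setminus \{0\}$ to conclude the count is the same for all nonzero first rows. The main (and really only) obstacle is the bookkeeping in step (2) — verifying that conjugating by a block-diagonal matrix genuinely acts just by $Q^T$ on the first row and does not disturb the rank — which is a short direct computation. No deep input is required; this is purely an orbit argument.
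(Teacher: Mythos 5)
Your proposal is correct and is essentially the paper's argument: both proofs move one admissible first row $(0,v')$ to another by congruence transformations $M \mapsto PMP^T$ with $P$ of the block form $\operatorname{diag}(1,Q)$, which preserve skew-symmetry and rank and act on the tail of the first row by $v' \mapsto v'Q^T$. The only difference is one of packaging: the paper realizes the transitivity of this action on nonzero tails through explicit generators (symmetric permutations, symmetric scalings, and adding row $i+1$ to row $i+2$ together with the corresponding column operation), whereas you invoke the transitivity of $\GL(n-1,q)$ on $\F_q^{n-1}\setminus\{0\}$ directly; your handling of the forced zero in the $(1,1)$ position matches the paper's implicit assumption as well.
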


\begin{proof}
  Let $v$ be any nonzero vector whose first entry is zero.  
  Permuting rows and columns symmetrically preserves skew-symmetry and rank, 
  so the number of skew-symmetric matrices over $\F_q$ of rank $r$ having $v$ 
  as first row is equal to the number of such matrices with any permutation 
  of $v$ as the first row.  
  Similarly, multiplying a row and the corresponding column by a scalar 
  preserves skew-symmetry and rank, so for any nonzero entry $v_i$ of $v$, 
  the number of skew-symmetric matrices of rank $r$ having $v$ as first row 
  is the same as the number of such matrices with $v_i$ replaced by any other 
  nonzero element of $\F_q$.  Thus, we only have to consider first rows of 
  the form $(0, 1, \ldots, 1, 0, \ldots, 0)$.  

We now give a bijection between skew-symmetric matrices of rank $r$ with first row $(0, 1^i, 0^{n - i - 1})$ and skew-symmetric matrices of rank $r$ with first row $(0, 1^{i + 1}, 0^{n - i - 2})$.  For a skew-symmetric matrix of rank $r$ with first row $(0, 1^i, 0^{n - i - 1})$, add row $i+1$ to row $i+2$ and then add column $i+1$
  to column $i+2$.  The resulting matrix is skew-symmetric of rank $r$ and has first row $(0, 1^{i + 1}, 0^{n - i - 2})$.  Moreover, this operation is reversible, so it is our desired bijection.  Together with the previous paragraph, this completes our result.
\end{proof}

\begin{proposition} \label{prop:curious} For $n$ even, $\sk(n) = \sym(n-1)$.
\end{proposition}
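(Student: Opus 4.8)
The plan is to mimic the recursive strategy of Section~\ref{sec:Greta's recursion} and Lemma~\ref{lemma:skewfirstrow}: set up a recursion for $\sk(n)$ in $n$ (or, more precisely, for the count of skew-symmetric matrices of each rank $r$), solve it in closed form, and compare with the MacWilliams formula~\eqref{eqn:invsymmetric} for $\sym(n-1)$. Concretely, I would let $\mathrm{sk}(n,r)$ denote the number of $n\times n$ skew-symmetric matrices over $\F_q$ of rank $r$; note $\mathrm{sk}(n,r)=0$ unless $r$ is even, and $\sk(n)=\mathrm{sk}(n,n)$ when $n$ is even. To build a recursion, write an $(n+1)\times(n+1)$ skew-symmetric matrix as $A=\begin{bmatrix} 0 & v \\ -v^T & B\end{bmatrix}$ where $B$ is $n\times n$ skew-symmetric and $v$ is a row vector of length $n$, so there are $\sum_B q^n$ such matrices. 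I would then count, for fixed $B$ of rank $r$, how many choices of $v$ give $\rk A \in\{r, r+2\}$ (these are the only two possibilities, since a skew-symmetric matrix has even rank and gluing a symmetric row/column pair changes the rank by $0$ or $2$). By Lemma~\ref{lemma:skewfirstrow} this count depends only on $n$ and $r$ (not on $B$ or on $v$ beyond whether $v=0$), so I can reduce to the canonical $B=\begin{bmatrix} J_r & 0 \\ 0 & 0\end{bmatrix}$ with $J_r$ the standard rank-$r$ skew form and just do the bookkeeping: $\rk A = r$ forces the last $n-r$ coordinates of $v$ to vanish and forces a (automatically satisfied, by alternation) compatibility condition, while $\rk A = r+2$ requires some nonzero entry among the last $n-r$ coordinates.

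This yields a two-term recursion expressing $\mathrm{sk}(n+1, r')$ in terms of $\mathrm{sk}(n, r'-2)$ and $\mathrm{sk}(n,r')$ with explicit $q$-power coefficients, together with base case $\mathrm{sk}(1,0)=1$. I would then solve this recursion. The cleanest route is probably to guess the closed form from the known answer: since I am claiming $\sk(2m) = \sym(2m-1) = q^{\binom{2m-1+1}{2}}\prod_{j=1}^{m}(1-q^{1-2j})$... (with the appropriate ceiling), I would propose a formula for $\mathrm{sk}(n,r)$ of the shape $q^{(\text{binomial})}\cdot$ (a product of $(q^{2i}-1)$ type factors) $\cdot$ (a $q$-binomial-coefficient-like factor counting the position of the rank-$r$ block), and verify by induction that it satisfies the recursion and base case. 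Alternatively, and perhaps more illuminatingly, I could avoid solving for all $r$ at once: restrict attention to the full-rank case and show directly that $\mathrm{sk}(2m, 2m)$ and $\sym(2m-1)$ satisfy the same recursion in $m$ with the same initial value, using Theorem~\ref{thm:clover} to rephrase the target if convenient. Either way, the final comparison with~\eqref{eqn:invsymmetric} is a routine manipulation of products of $q$-powers.

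The main obstacle I anticipate is the casework in establishing the recursion — specifically, verifying that when $B$ has rank $r$ and we append the skew row/column pair, the rank-$r$ case really does impose no condition beyond vanishing of the tail of $v$ (the would-be $(1,1)$-entry obstruction $vBv^T$ from the symmetric/matrix setting is identically zero here by alternation, which is exactly why skew-symmetric matrices behave more simply), and correctly counting the $\rk A = r+2$ completions. There is also a small subtlety in that $\sk(n)=0$ for $n$ odd, so the recursion genuinely lives on even ranks and I must track parity carefully; and Lemma~\ref{lemma:skewfirstrow} is stated for a fixed rank, so I should make sure I only ever invoke it rank-by-rank. Once the recursion is pinned down, solving it and matching the MacWilliams formula should be straightforward algebra, not a conceptual difficulty.
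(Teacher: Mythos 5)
Your approach is correct in outline, but it is a genuinely different route from the paper's proof of Proposition~\ref{prop:curious}. The paper does not set up a rank recursion for this proposition: it uses Lemma~\ref{lemma:skewfirstrow} to equidistribute invertible skew-symmetric matrices over their (nonzero) first rows, reduces to first row $(0,1,0,\dots,0)$, observes that then $\det A = \det B$ where $B$ is the lower-right $(n-2)\times(n-2)$ block and that the remaining $n-2$ entries of the second row are free, and so obtains the two-step recursion $\sk(n) = q^{n-2}(q^{n-1}-1)\sk(n-2)$ on invertible matrices only; Equation~\eqref{eqn:invsymmetric} shows $\sym(n-1)$ satisfies the same recursion, and $\sym(1)=\sk(2)=q-1$ finishes the induction. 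Your single-row gluing recursion $\sk(n+1,r) = q^{r}\sk(n,r) + (q^{n}-q^{r-2})\sk(n,r-2)$ is exactly what the paper derives in the proposition immediately following, where it is solved in closed form; specializing that closed form at $r=n=2m$ and comparing with \eqref{eqn:invsymmetric} does prove the statement, and it buys you more (the full distribution by rank, the $q$-analogue of partial fixed-point-free involutions) at the cost of having to guess and verify a closed formula, whereas the paper's two-row peeling is shorter and stays entirely within invertible matrices. Two cautions. First, the fact that the completion count depends only on $n$ and $r$ is not what Lemma~\ref{lemma:skewfirstrow} says (that lemma fixes the first row and varies the rest of the matrix); what you need is that all skew-symmetric matrices of rank $r$ are congruent, $B = M\begin{bmatrix} J_r & 0\\ 0 & 0\end{bmatrix}M^{T}$, i.e.\ the simplifying step of Proposition~\ref{prop:Greta's recursion with k zeroes} adapted to congruence --- or simply the direct observation that $\rk A = r$ iff $v\in\rowspace(B)$, with the $(1,1)$-obstruction vanishing by alternation as you note. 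Second, your proposed shortcut of restricting to full rank does not close up under single-row gluing: it expresses $\sk(2m,2m)$ in terms of $\sk(2m-1,2m-2)$, which in turn involves the non-full-rank count $\sk(2m-2,2m-4)$, so you must either solve the recursion for all ranks (your primary plan, which works) or peel off two rows and columns at once, which is precisely the paper's argument.
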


\begin{remark}
After the first write-up of this paper we found that this was proven by Jones \cite[Theorems 1.7, 1.7$'$, 1.8$'$, 1.9]{oj} using topological methods. 
\end{remark}

\begin{proof} We proceed by showing that the two sides of the claimed equality satisfy the same recursion.
  By Lemma~\ref{lemma:skewfirstrow}, the number of
  invertible skew-symmetric matrices of size $n$ is equidistributed
  with respect to the nonzero choices for the first row (and there 
  are no invertible matrices with first row all zero).  Thus to 
  compute $\sk(n)$ we multiply $q^{n - 1}-1$, the number of choices 
  for the first row, by the number of invertible skew-symmetric 
  matrices with first row $(0, 1, 0, \ldots, 0)$.  Let 
   \[
    A = \begin{bmatrix} 
    0      & 1        & 0       & \cdots & 0       \\ 
    -1     & 0        & a_{2,3} & \cdots & a_{2,n} \\ 
    0      & -a_{2,3} &         &        &         \\ 
    \vdots & \vdots   &         & B      &         \\ 
    0      & -a_{2,n} &         &        &
    \end{bmatrix}
   \] 
  be one such matrix, where $B$ is the lower-right $(n - 2)\times(n - 2)$ 
  block of $A$.  The matrix $B$ is certainly skew-symmetric; in addition, 
  $\det A = \det B$ and so $A$ is invertible if and only if $B$ is invertible.  Furthermore, the determinant of $A$ is independent of the $n - 2$ values $a_{2, 3}, \ldots, a_{2, n}$ that appear in the second row and column of $A$, so for each $B$ we can choose these values freely. Thus, we get the recurrence 
  \[ 
  \sk(n) = q^{n-2}(q^{n-1} - 1)\sk(n-2).
  \]
  For $n$ even, Equation \eqref{eqn:invsymmetric} implies 
  \[
  \sym(n - 1) = q^{n-2} (q^{n-1} - 1) \sym(n-3).
  \]
  Since $\sym(1) = \sk(2) = q-1$, we're done by induction.
\end{proof}

In fact, it is not difficult to enumerate skew-symmetric matrices of arbitrary rank over $\F_q$.  We do this in the following result.

\begin{proposition} Let $\sk(n,r)$ be the number of $n\times n$
  skew-symmetric matrices of rank $r$. When $r$ is odd we have $\sk(n, r) = 0$ and when $r$ is even we have
\[ \sk(n, r) = q^{r(r-2)/4}(1-q)^{r/2} \cdot \frac{[n]_q!}{[n - r]_q! \cdot [r]_q!!} .
\]
\end{proposition}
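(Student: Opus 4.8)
The plan is to refine, by rank, the recursion used to prove Proposition~\ref{prop:curious} (which is the case $r = n$). The odd case is immediate: over a field of odd characteristic a skew-symmetric matrix has zero diagonal and is congruent to a direct sum of copies of $\begin{bmatrix} 0 & 1 \\ -1 & 0 \end{bmatrix}$ together with a zero block (the symplectic normal form), so its rank is even; hence $\sk(n,r) = 0$ for $r$ odd. Assume now that $r$ is even.

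I would partition the $n\times n$ skew-symmetric matrices of rank $r$ according to whether the first row vanishes. If it does, then the first column vanishes as well (skew-symmetry forces $A_{1,1} = 0$ and $A_{i,1} = -A_{1,i}$), and removing the first row and column leaves an arbitrary $(n-1)\times(n-1)$ skew-symmetric matrix of the same rank; this accounts for $\sk(n-1,r)$ of them. If the first row is nonzero, it lies in the hyperplane $A_{1,1} = 0$, so there are $q^{n-1} - 1$ possibilities, and by Lemma~\ref{lemma:skewfirstrow} each occurs as the first row of the same number of rank-$r$ skew-symmetric matrices; it therefore suffices to count those with first row $(0, 1, 0, \dots, 0)$. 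Writing such a matrix in the block form used in Proposition~\ref{prop:curious},
\[
A = \begin{bmatrix} 0 & 1 & 0 \\ -1 & 0 & w \\ 0 & -w^T & B \end{bmatrix},
\]
with $B$ the (automatically skew-symmetric) lower-right $(n-2)\times(n-2)$ block and $w \in \F_q^{n-2}$ unconstrained, one uses the first row and column to clear $w$ and $w^T$ by elementary operations without disturbing $B$, leaving a block matrix with the invertible $2\times 2$ corner and $B$ in the lower right; hence $\rk A = \rk B + 2$ and $w$ is irrelevant to the rank. So the number with a fixed nonzero first row is $q^{n-2}\,\sk(n-2, r-2)$, and we obtain the recursion
\[
\sk(n,r) = \sk(n-1,r) + q^{n-2}(q^{n-1} - 1)\,\sk(n-2, r-2),
\]
with the conventions $\sk(n,0) = 1$ for all $n \ge 0$, $\sk(n,r) = 0$ for $r > n$, and $\sk(n,r) = 0$ for $r$ odd.

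It then remains to verify, by induction on $n$ (small cases being immediate), that the claimed formula satisfies this recursion. Writing $r = 2s$, one uses $q^{n-2}(q^{n-1}-1) = q^{n-2}(q-1)[n-1]_q$ to produce the extra factor of $q-1$, rewrites the $q$-factorials via $[n]_q! = [n]_q\,[n-1]_q!$ and $[r]_q!! = [r]_q\,[r-2]_q!!$, and collapses the two terms using the identity $[n]_q - [n-r]_q = q^{\,n-r}[r]_q$ — the $[r]_q$ it produces is exactly what reconciles $[r-2]_q!!$ with $[r]_q!!$, and the surviving $[n]_q$ is what upgrades $[n-1]_q!/[n-1-r]_q!$ to $[n]_q!/[n-r]_q!$. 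The only slightly fiddly point is the exponent of $q$: after the dust settles everything reduces to the numerical identity $s(s-1) + (n-2s) = (n-2) + (s-1)(s-2)$. I expect this bookkeeping of $q$-powers — rather than the rank computation, which is forced once one observes that the invertible $2\times 2$ corner splits off rank exactly $2$ regardless of $w$ and $B$ — to be the only place where care is needed.
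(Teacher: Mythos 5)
Your proof is correct, but it runs on a genuinely different recursion from the paper's. The paper adds a single row and column: writing $A=\begin{bmatrix} 0 & v\\ -v^T & B\end{bmatrix}$ with $B$ skew-symmetric of size $n-1$, it uses evenness of rank to see $\rk B\in\{r,r-2\}$, observes that $\rk A=\rk B$ exactly when $v\in\rowspace(B)$ (giving $q^r$ such $v$), and so obtains $\sk(n,r)=q^r\sk(n-1,r)+(q^{n-1}-q^{r-2})\sk(n-1,r-2)$, which it solves by induction; Lemma~\ref{lemma:skewfirstrow} is not used there at all. You instead split on whether the first row vanishes and, when it does not, combine Lemma~\ref{lemma:skewfirstrow} with the two-row reduction from the proof of Proposition~\ref{prop:curious}, splitting off the invertible $2\times 2$ corner so that $w$ is irrelevant to the rank; this gives the different (also correct) recursion $\sk(n,r)=\sk(n-1,r)+q^{n-2}(q^{n-1}-1)\sk(n-2,r-2)$, and your bookkeeping identities ($[n]_q-[n-r]_q=q^{\,n-r}[r]_q$ and $s(s-1)+(n-2s)=(n-2)+(s-1)(s-2)$) are exactly what closes the induction. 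What your route buys is that it is a rank refinement of an argument already in the paper, so the equidistribution lemma does double duty and the rowspace criterion is never needed; the paper's recursion decreases only $n$ and by one step, which makes its induction marginally leaner. One caveat, which applies verbatim to the paper's own proof as well: the displayed formula with the factor $(1-q)^{r/2}$ is $(-1)^{r/2}$ times the actual count when $r\equiv 2 \pmod 4$ (e.g.\ $\sk(2,2)=q-1$, while the formula evaluates to $1-q$), and it is the variant with $(q-1)^{r/2}$ that satisfies either recursion; so your concluding ``verify by induction'' step, like the paper's, goes through only after this sign is corrected.
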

\begin{proof}
  It is a well-known fact that all skew-symmetric matrices have even
  rank (see for example \cite[\S XV.8]{lang}). Given an $n \times n$
  skew-symmetric matrix $A$ of rank $r$, we write $A = \begin{bmatrix}
    0 & v \\ -v^T & B\end{bmatrix}$ where $B$ is an $(n - 1) \times (n
  - 1)$ skew-symmetric matrix.  As we observed in the proof of
  Proposition~\ref{prop:Greta's recursion with k zeroes}, we have $\rk
  A - \rk B \in \{0, 1, 2\}$.  Since the rank of both matrices is even
  we have that $\rk B = r$ or $\rk B = r - 2$.  Thus, we have that
  $\rk B = r$ if and only if $v$ is in the rowspace of $B$.  It
  follows immediately that
  \[
    \sk(n,r) = q^r\sk(n-1,r) + (q^{n-1}-q^{r-2})\sk(n-1,r-2),
  \] 
  with initial values $\sk(n, 0) = 1$. One can easily verify by induction that the solution to this recursion is
  \[ 
    \sk(n, r) = q^{r(r-2)/4}(1-q)^{r/2} \cdot \frac{[n]_q!}{[n - r]_q! \cdot [r]_q!!}
  \]
  for $r$ even. 
\end{proof}

One interesting observation is that this is a $q$-analogue of $\binom{n}{r}(r-1)!!$, the number of ``partial involutions of rank $r$'' with no fixed points, i.e., the number of pairs of an $r$-subset of $\{1, \ldots, n\}$ together with a fixed point-free involution on that set.

\subsection{The curious relation via Schubert
  varieties} \label{sec:curious2} 

Let $n$ be even. In this section, we give another proof of
Proposition~\ref{prop:curious} via Schubert varieties. The idea is to
first realize both as the intersections of certain Schubert cells and
opposite Schubert cells. Such intersections are indexed by intervals
in certain Bruhat orders and the number of $\F_q$-valued points is
given by the (parabolic) R-polynomials of Deodhar. We then show that
the intervals in question are isomorphic as abstract posets and use
combinatorial invariance properties of these R-polynomials to get the
desired result. We will use \cite[Chapters 6 and 7]{smt} as a
reference for the following.

We let $I_m$ be the $m \times m$ identity matrix and $J_m$ be the $m
\times m$ matrix with $1$'s on the antidiagonal and $0$'s
elsewhere. Assume that $q$ is odd. (When $q$ is even,
Proposition~\ref{prop:curious} becomes Theorem~\ref{thm:clover}.) Let
$K$ be an algebraic closure of $\F_q$. Let $V$ be a
$2(n-1)$-dimensional vector space over $K$ with a nondegenerate
skew-symmetric bilinear form $\langle\, , \rangle$.  Let
\[
X = \{ V' \subset V \mid \dim V' = n-1, \text{ the restriction of the
  form to } V' \text{ is zero.} \}.
\]
be the Lagrangian Grassmannian. This is a closed subvariety of the
usual Grassmannian of $(n-1)$-dimensional subspaces of $V$.

Choose an ordered basis $\{e_1, \dots, e_{n-1}, e^*_{n-1}, \dots,
e^*_1\}$ of $V$ such that $\langle e_i, e^*_j \rangle =
\delta_{i,j}$. So we can think of elements of $X$ as $2(n-1) \times
(n-1)$ matrices where the columns of the matrix are some basis for the
given point. This also gives us a choice of Borel and opposite Borel
subgroup of the symplectic group $\mathbf{Sp}(V)$ acting on $X$, so we
can define Schubert varieties and opposite Schubert varieties. Then we
can embed symmetric $(n-1) \times (n-1)$ matrices into $X$ via the map
\[
M \mapsto \begin{bmatrix} I_{n-1} \\ J_{n-1}M \end{bmatrix}.
\]
The image of this map is the opposite big cell, and to get the set of
symmetric matrices with rank at most $r$ for some given $r$, we
intersect with an appropriate Schubert variety. So instead of counting
invertible symmetric matrices, we can count symmetric matrices with
rank at most $n-2$. We can index the Schubert varieties of $X$ by the
set
\[
\{ (a_1, \dots, a_{n-1}) \mid 1 \le a_1 < \cdots < a_{n-1} \le
2(n-1),\ \#(\{i,2n-1-i\} \cap \{a_1, \dots, a_{n-1}\}) = 1 \text{ for
  all } i \}.
\]
and the Bruhat order is given by termwise comparison, i.e., $(a_1,
\dots, a_{n-1}) \le (a'_1, \dots, a'_{n-1})$ if and only if $a_i \le
a'_i$. In particular, the Schubert variety we intersect with to get
symmetric matrices with rank at most $r$ is indexed by $(r+1, r+2,
\dots, n-1, 2n-1-r, 2n-r, \dots, 2n-2)$.

~

We can also do the same setup for skew-symmetric matrices. Let $W$ be
a $2n$-dimensional vector space over $K$ with a nondegenerate
symmetric bilinear form $(\, ,)$. Let
\[
Y' = \{ W' \subset W \mid \dim W' = n, \text{ the restriction of the
  form on } W' \text{ is zero.} \}
\]
be the orthogonal Grassmannian.

Choose an ordered basis $\{e_1, \dots, e_{n}, e^*_{n}, \dots, e^*_1\}$
of $W$ such that $(e_i, e^*_j) = \delta_{i,j}$. Then $Y'$ has two
connected components, each isomorphic to the spinor variety.  Let $Y$
be the component which contains the subspace spanned by $\{e_1, \dots,
e_{n-1}\}$. So we can think of elements of $Y$ as $2n \times n$
matrices where the columns of the matrix are some basis for the given
point. This also gives us a choice of Borel and opposite Borel
subgroup of the special orthogonal group $\mathbf{SO}(V)$ acting on
$Y$, so we can define Schubert varieties and opposite Schubert
varieties. Then we can embed skew-symmetric $n \times n$ matrices into
$Y$ via the map
\[
M \mapsto \begin{bmatrix} I_n \\ J_nM \end{bmatrix}.
\]
The image of this map is the opposite big cell, and to get the set of
skew-symmetric matrices with rank at most $r$ for some even integer
$r$, we intersect with an appropriate Schubert variety. So instead of
counting invertible skew-symmetric matrices, we can count
skew-symmetric matrices with rank at most $n-2$. We can index the
Schubert varieties of $Y$ by the set
\begin{align*}
  \left\{ (a_1, \dots, a_{n}) \;\middle\vert\; \begin{array}{l} 1
      \le a_1 < \cdots < a_{n} \le 2n,\\
      \#(\{i,2n+1-i\} \cap \{a_1, \dots, a_n\}) = 1 \text{
        for all } i, \\
      \#(\{a_1, \dots, a_n\} \cap \{n+1, \dots, 2n\}) \text{ is
        even } \end{array} \right\},
\end{align*}
and the Bruhat order is given by termwise comparison, i.e., $(a_1,
\dots, a_n) \le (a'_1, \dots, a'_n)$ if and only if $a_i \le
a'_i$. In particular, the Schubert variety we intersect with to get
skew-symmetric matrices with rank at most $r$ ($r$ even) is indexed by
$(r+1, r+2, \dots, n, 2n+1-r, 2n+2-r, \dots, 2n)$.

We also note that these two Bruhat orders are isomorphic in such a way
that the Schubert varieties corresponding to matrices of rank at most
$n-2$ in the two cases correspond to one another. First define
$\phi(i) = i$ if $ i \le n$ and $\phi(i) = i+2$ if $ i > n-1$. The map
from the first Bruhat order to the second is $(a_1, \dots, a_{n-1})
\mapsto (\phi(a_1), \dots, \phi(a_{n-1})) \cup \{x\}$ where $x \in
\{n,n+1\}$ is whichever element is needed to ensure that the result
satisfies the right evenness condition from above.

In both cases, we can count the number of points over $\F_q$ in the
intersection using the parabolic R-polynomials of Deodhar
\cite[Proposition 4.2]{deodhar}. These only depend on the
combinatorics of the Bruhat order since both the Lagrangian
Grassmannian and the spinor variety are cominuscule homogeneous spaces
(see for example \cite[Corollary 4.8]{brenti}). Hence the number of
matrices in both cases of rank at most $n-2$ are the same. Since $n$
is even, a non-invertible skew-symmetric $n \times n$ matrix has rank
at most $n-2$. It is clear that the number of skew-symmetric matrices
$n \times n$ matrices is the same as the number of symmetric $(n-1)
\times (n-1)$ matrices, so we also get that the number of invertible
such matrices are the same.

\section{Refined enumeration of symmetric
  matrices} \label{sec:symmetricrank}

In this section, we attack the problem of computing the number of $n \times n$ symmetric matrices over $\F_q$ with all-zero diagonal by rank.  Roughly speaking, we should expect this problem to be a $q$-analogue of counting fixed point-free involutions, or of ``partial fixed point-free involutions'' when we consider matrices of less than full rank.  As in the preceding sections, we construct a recursion to count the desired objects.  Our basic approach is the same as in Section \ref{sec:Greta's recursion}.  The main difference is that the symmetry of our matrices forces us to introduce a sort of parity condition depending on whether or not we can write a matrix in the form $M \cdot M^T$ for some other matrix $M$. The details on whether or not we can do this are different for odd and even characteristic. We begin by mentioning both cases to then restrict our attention and results to the odd case.

\begin{remark}[{\bf$q$ even}] It was shown by Albert \cite[Thm. 7]{albert} that a symmetric matrix $A$ in $\GL(n,q)$ can be factored in the form $A = M\cdot M^T$ for some matrix $M$ in $\GL(n,q)$ if and only if $A$ has at least one nonzero diagonal entry. Thus
\[
\#\{ A \in \Sym(n) \mid A = M\cdot M^T \text{ for some } M\in
\GL(n,q)\} = \sym(n) - \sym_0(n).
\] 
MacWilliams \cite{macwilliams} gave an elementary proof of Albert's theorem and also calculated $\sym_0(n,r)$, the number of $n \times n$ symmetric matrices of rank $r$ with zero diagonal, when $q$ is even.

\begin{theorem*}[{\cite[Thm. 3]{macwilliams}}] 
For $q$ even, if $r=2s+1$ is odd then
\begin{equation} \label{mac:qeveneq1} 
\sym_0(n,2s+1)=0
\end{equation}
while if $r=2s$ is even then
\begin{equation} \label{mac:qeveneq2}
\sym_0(n,2s)=\prod_{i=1}^{s} \frac{q^{2i-2}}{q^{2i}-1} \prod_{i=0}^{2s-1}(q^{n-i}-1).
\end{equation}
\end{theorem*}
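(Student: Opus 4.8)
The plan is to reduce the statement to the enumeration of \emph{alternating} matrices and then run a rank recursion exactly parallel to the one proved for $\sk(n,r)$ in Section~\ref{sec:curious}. The first step is the observation that in characteristic $2$ one has $-1 = 1$, so a symmetric matrix with zero diagonal is precisely an alternating matrix, i.e.\ the Gram matrix of a bilinear form $B$ satisfying $x^T B x = 0$ for all $x$. Thus $\Sym_0(n,r)$ is exactly the set of $n \times n$ alternating matrices over $\F_q$ of rank $r$. One must be careful here: in characteristic $2$, ``alternating'' is strictly stronger than ``symmetric''---a general symmetric matrix equals its own negative transpose but need not have zero diagonal---so one cannot simply invoke the skew-symmetric results of Section~\ref{sec:skewsym}.

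Next, the structure theory of alternating forms---valid over any field, an alternating form is an orthogonal direct sum of hyperbolic planes together with its radical---shows that every alternating matrix has even rank, which gives \eqref{mac:qeveneq1}: $\sym_0(n,2s+1) = 0$.

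For \eqref{mac:qeveneq2} I would mimic the proof of the last proposition of Section~\ref{sec:curious}. Writing an alternating matrix $A$ of rank $r$ in block form
\[
A = \begin{bmatrix} 0 & v \\ v^T & B \end{bmatrix},
\]
the lower-right $(n-1)\times(n-1)$ block $B$ is again alternating (it is symmetric, and its diagonal entries are among those of $A$), and, as in Section~\ref{sec:Greta's recursion}, $\rk A - \rk B \in \{0,1,2\}$. Since both ranks are even, $\rk B \in \{r, r-2\}$, and---exactly as in the skew-symmetric case, using $x^T B x = 0$---one checks that $\rk A = \rk B$ if and only if $v \in \rowspace(B)$. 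Counting the $q^r$ choices of $v$ in the first case and the $q^{n-1} - q^{r-2}$ choices in the second yields, for $r$ even,
\[
\sym_0(n,r) = q^r\, \sym_0(n-1,r) + (q^{n-1} - q^{r-2})\, \sym_0(n-1,r-2),
\]
with $\sym_0(n,0) = 1$. This is the identical recursion satisfied by $\sk(n,r)$ (unsurprisingly, since the classification of alternating forms does not see the characteristic), so it has the same solution, and one verifies by induction that this solution equals the product in \eqref{mac:qeveneq2}. A less computational alternative for \eqref{mac:qeveneq2} is to parametrize a rank-$2s$ alternating form by its radical (an $(n-2s)$-dimensional subspace, $\frac{[n]_q!}{[2s]_q!\,[n-2s]_q!}$ choices) together with a nondegenerate alternating form on the $2s$-dimensional quotient, of which there are $|\GL(2s,q)| / |\mathbf{Sp}(2s,q)|$; substituting the group orders and simplifying gives the same answer.

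The only genuinely delicate point is the one flagged above: keeping the characteristic-$2$ distinction between alternating and symmetric matrices straight, and confirming that the rank/rowspace dichotomy used for skew-symmetric matrices carries over verbatim to characteristic $2$ (it does, and is in fact slightly cleaner there, since $x^T B x = 0$ holds identically rather than only for the symmetric part). Everything after that---solving the recursion, or simplifying the group-order quotient---is routine.
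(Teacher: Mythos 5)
Your proof is correct, but note that the paper does not prove this statement at all: it is imported verbatim from MacWilliams \cite[Thm.~3]{macwilliams} as a known result (the paper's own recursions in Sections~\ref{sec:skewsym}--\ref{sec:symmetricrank} are explicitly restricted to $q$ odd), so there is no internal proof to compare against. What you supply is a legitimate independent argument, and your key structural point is exactly the right one: in characteristic $2$ the condition ``symmetric with zero diagonal'' is equivalent to ``alternating'' (since $x^TAx=\sum_i A_{ii}x_i^2$ there), alternating forms have even rank over any field, and the bordering argument of the paper's skew-symmetric proposition goes through verbatim because the only fact used to clear the corner entry is $c^TBc=0$, which holds identically for alternating $B$. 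I checked the resulting recursion $\sym_0(n,r)=q^r\sym_0(n-1,r)+(q^{n-1}-q^{r-2})\sym_0(n-1,r-2)$ and verified that the product in \eqref{mac:qeveneq2}, i.e.\ $q^{s(s-1)}\prod_{i=0}^{2s-1}(q^{n-i}-1)\big/\prod_{i=1}^{s}(q^{2i}-1)$, satisfies it with $\sym_0(n,0)=1$, so the induction closes. One small practical caution: do the induction directly against \eqref{mac:qeveneq2}, as you propose, rather than quoting the paper's displayed closed form for $\sk(n,r)$ --- as printed, the factor $(1-q)^{r/2}$ there introduces a sign $(-1)^{r/2}$ relative to MacWilliams' (manifestly positive) product, so citing it literally would create a spurious discrepancy when $r\equiv 2\pmod 4$. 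Your alternative count --- choosing the $(n-2s)$-dimensional radical in $\frac{[n]_q!}{[2s]_q!\,[n-2s]_q!}$ ways and a nondegenerate alternating form on the quotient in $\#\GL(2s,q)/\#\mathbf{Sp}(2s,q)$ ways --- also simplifies to exactly \eqref{mac:qeveneq2} and is arguably the cleanest route, and closer in spirit to a structure-theoretic proof than to MacWilliams' elementary row-by-row count.
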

Henceforth, we will always assume that $q$ is odd.
\end{remark}

For $q$ odd, define $\F_q^{*2}$
to be the set of perfect squares in $\F_q$, i.e., $x \in \F_q^{*2}$ if
and only if there is some $y \in \F_q$ such that $y^2 = x$.  Define
$\psi \colon \F_q^\times \to \{+, -\}$ by $\psi(\delta) = +$ if and only if
$\delta \in \F_q^{*2}$.  In other words, $\psi$ is the Legendre symbol for $\F_q$.  This notion can be extended to symmetric matrices in a natural way, as the following remark shows.

\begin{remark} \label{symm:symmdiagon} 
By applying symmetric row and column reductions, every $n \times n$ symmetric matrix $A$ of rank $r > 0$ can be written either in the form $A = M \cdot \diag(1^r, 0^{n - r}) \cdot M^T$ for some $M \in \GL(n, q)$ or in the form $M \cdot \diag(1^{r - 1}, z, 0^{n - r}) \cdot M^T$ for some $z \in \F_q \smallsetminus \F_q^{*2}$ and some $M \in \GL(n, q)$. 
\end{remark}

In the former case we say that $A$ has \textbf{(quadratic) character} $\psi(A) = \psi(1) = +$ and in the latter case we say it has character $\psi(A) = \psi(z) = -$. By convention $\psi({\bf 0})=+$ where ${\bf 0}$ is the zero matrix.  (This will be used in the proof of Proposition \ref{symm:recranknumzeroes}.)  Two notable special cases are that if $A \in \GL(n, q)$ then $\psi(A) = \psi(\det A)$, while if $A$ is diagonal then $\psi(A) = +$ if and only if the product of the nonzero diagonal entries of $A$ is a square in $\F_q$.

Let $\sq^{+}(m)$ be the number of solutions to the equation $\sum_{i = 1}^m x_i^2 = 0$ in $\F_q^m$, and similarly let $\sq^{-}(m)$ be the number of solutions to the equation $\sum_{i = 1}^{m - 1} x_i^2 + z x_m^2= 0$ where $z$ is some (fixed) non-square in $\F_q$.  Simple formulas are known for $\sq^{+}$ and $\sq^{-}$ (see \cite[Thm 1.26]{wan}) and we list them in Table \ref{table:sq}.
  
\begin{table}
\begin{center}$
\begin{array}{|l||l|l|l|}
\hline 
         & m \text{ odd} & m \text{ even} \\ 
\hline
\sq^{+}(m) & q^{m-1} & \begin{cases}
q^{m-1} + q^{m/2} - q^{m/2-1} &\text{if } (-1)^{m/2}\in \F_q^{*2},  \\ 
q^{m-1} -q^{m/2} + q^{m/2-1} &\text{otherwise }
\end{cases}\\ 
\hline
\sq^{-}(m) & q^{m-1} & \begin{cases}
q^{m-1} - q^{m/2} + q^{m/2-1}
&\text{if } (-1)^{m/2} \in \F_q^{*2},\\
q^{m-1} + q^{m/2} - q^{m/2-1}
& \text{otherwise }
\end{cases} \\ 
\hline
\end{array}
$\end{center}
\caption{Values of the functions $\sq^{+}$ and $\sq^{-}$, from \cite[Thm 1.26]{wan}.  Note that the case $\sq^{-}(m)$ for odd $m$ is not mentioned explicitly in the reference, but that the proof method for $m$ odd does not discriminate between the two cases $\sq^{\pm}(m)$.}
\label{table:sq}
\end{table}

Let $\sym(n,r)$ be the number of $n\times n$ symmetric matrices with rank $r$, and let $\sym^{\psi}(n,r)$ be the number of $n\times n$ symmetric matrices with rank $r$ and character $\psi$.  We will make substantial use the following results of MacWilliams.
\begin{theorem*}[{\cite[Thm. 2]{macwilliams}}] 
  We have
\begin{align}
   \sym(n,r) &= \prod_{i=1}^{\lfloor r/2\rfloor} \frac{q^{2i}}{q^{2i} - 1} \prod_{i=0}^{r-1} (q^{n-i}-1), \label{macwill:eq1} \\
   \sym^{+}(n,2s+1) &= \frac{1}{2}\sym(n,2s+1), \label{macwill:eq2} \\
 \intertext{and} 
   \sym^{+}(n,2s) &= \begin{cases}
     \displaystyle \frac{q^s + 1}{2q^s}\sym(n,2s) &\text{ if  $-1$ is a
      square  in } \F_q\\ 
     \displaystyle \frac{q^s + (-1)^s}{2q^s}\sym(n,2s) &\text{ otherwise}.
    \end{cases}
 \label{macwill:eq3}
\end{align}
\end{theorem*}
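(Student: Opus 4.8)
The plan is to reduce all three formulas to the case of \emph{invertible} symmetric matrices and then invoke the classification of symmetric bilinear forms over $\F_q$. First, a symmetric matrix $A$ of rank $r$ has a well-defined radical $\ker A = (\im A)^{\perp}$, of dimension $n-r$, and giving $A$ amounts to giving the subspace $R = \ker A$ together with a nondegenerate symmetric form on the quotient $\F_q^n/R$; moreover $\psi(A)$ depends only on the latter (take a basis in which $A = \diag(A',0)$ and apply the convention of Remark~\ref{symm:symmdiagon} to $A'$). Hence $\sym^{\psi}(n,r) = \binom{n}{r}_q\,\sym^{\psi}(r,r)$ and $\sym(n,r) = \binom{n}{r}_q\,\sym(r,r)$, so \eqref{macwill:eq1} follows from its $r=n$ case, i.e. from \eqref{eqn:invsymmetric}, by a routine $q$-factorial manipulation; alternatively one proves $\sym(n,n)$ directly from the recursion
\[
\sym(n,n) = (q-1)q^{n-1}\sym(n-1,n-1) + (q^{n-1}-1)q^{n-1}\sym(n-2,n-2),
\]
obtained by symmetric Gaussian elimination on the first row and column according to whether the $(1,1)$-entry vanishes. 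It then remains only to determine $\sym^{+}(r,r)$.

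Next I would handle the odd-rank formula \eqref{macwill:eq2} by a scaling bijection. Fix a nonsquare $z \in \F_q^{\times}$; then $A \mapsto zA$ is a rank- and symmetry-preserving bijection on symmetric matrices of rank $r$. Writing $A \sim \diag(1^{r-1},\delta,0^{n-r})$ with $\psi(\delta)=\psi(A)$ as in Remark~\ref{symm:symmdiagon}, we see that $zA$ is equivalent to a diagonal matrix with $r-1$ entries equal to $z$ and one entry $z\delta$, so $\psi(zA) = \psi(z^{r}\delta)$, which is $-\psi(\delta) = -\psi(A)$ when $r$ is odd and $\psi(\delta) = \psi(A)$ when $r$ is even. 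Thus for odd rank this map interchanges the two characters, so $\sym^{+}(n,2s+1) = \sym^{-}(n,2s+1) = \tfrac12\sym(n,2s+1)$. (The computation for even $r$ is exactly why the even case requires genuine work.)

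For the even-rank formula \eqref{macwill:eq3} I would argue by orbit counting. Over $\F_q$ with $q$ odd there are exactly two isometry classes of nondegenerate symmetric bilinear forms in dimension $2s$, distinguished by the square class of the discriminant \cite[Ch.~1]{wan}, and $\GL(2s,q)$ acts transitively on each class via $A \mapsto gAg^T$; hence $\sym^{\psi}(2s,2s)$ is $|\GL(2s,q)|$ divided by the order of the orthogonal group of the corresponding class. Since $\det\begin{bmatrix}0 & I_s\\ I_s & 0\end{bmatrix} = (-1)^s$, the split (hyperbolic) class has character $\psi((-1)^s)$, which is what produces the dependence on whether $-1$ is a square and on the parity of $s$. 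Substituting the known orders $|\mathrm{O}^{\pm}_{2s}(q)| = 2q^{s(s-1)}(q^s \mp 1)\prod_{i=1}^{s-1}(q^{2i}-1)$ and $|\GL(2s,q)| = q^{\binom{2s}{2}}\prod_{i=1}^{2s}(q^i-1)$, and using $\sym(2s,2s) = |\GL(2s,q)|\bigl(|\mathrm{O}^+_{2s}| + |\mathrm{O}^-_{2s}|\bigr)\big/\bigl(|\mathrm{O}^+_{2s}|\,|\mathrm{O}^-_{2s}|\bigr)$, the ratio $\sym^{+}(2s,2s)/\sym(2s,2s)$ collapses to $|\mathrm{O}^{\mp}_{2s}|\big/\bigl(|\mathrm{O}^+_{2s}| + |\mathrm{O}^-_{2s}|\bigr) = \tfrac{q^s \pm 1}{2q^s}$; tracking which class carries the $+$ character in each of the cases ($-1$ a square; $-1$ a nonsquare with $s$ even; $-1$ a nonsquare with $s$ odd) then yields exactly the two branches of \eqref{macwill:eq3}.

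The main obstacle is the finite-field quadratic-form bookkeeping in the last step: correctly recalling the orders of the groups $\mathrm{O}^{\pm}_{2s}(q)$ and, more delicately, matching ``character $+$'' to the split or to the non-split class in each congruence/parity case. A route that stays closer to the techniques of Section~\ref{sec:Greta's recursion}, and that avoids citing orthogonal group orders, is instead to peel off the first row and column of an $n\times n$ symmetric matrix and set up a recursion for $\sym^{\psi}(n,r)$ that simultaneously tracks the change in rank and the change in character; this turns the needed input into the evaluations of $\sq^{\pm}$ recorded in Table~\ref{table:sq}, at the cost of a substantially longer case analysis.
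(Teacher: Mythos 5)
Your plan is correct, but note that the paper itself gives no proof of this statement: it is quoted verbatim from MacWilliams \cite{macwilliams}, whose original argument is an elementary row-by-row recursive count of symmetric matrices of each rank. Your route is genuinely different and it checks out. The radical/quotient reduction $\sym^{\psi}(n,r)=\binom{n}{r}_q\,\sym^{\psi}(r,r)$ is valid (the character depends only on the nondegenerate form induced on $\F_q^n/\ker A$), and it does recover \eqref{macwill:eq1} from the invertible case; your auxiliary recursion $\sym(n,n)=(q-1)q^{n-1}\sym(n-1,n-1)+(q^{n-1}-1)q^{n-1}\sym(n-2,n-2)$ is correct (the $a=0$ case gives a free $(1,1)$-entry and a free length-$(n-2)$ vector in the second row after the hyperbolic $2\times 2$ pivot, hence the factor $q^{n-1}$), which is worth having since deducing \eqref{macwill:eq1} from \eqref{eqn:invsymmetric} alone would be citing the same MacWilliams theorem. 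The nonsquare-scaling involution $A\mapsto zA$ cleanly forces \eqref{macwill:eq2}, and the orbit--stabilizer computation for even rank is right: the split class has discriminant class $\psi((-1)^s)$, the orders $|\mathrm{O}^{\pm}_{2s}(q)|=2q^{s(s-1)}(q^s\mp 1)\prod_{i=1}^{s-1}(q^{2i}-1)$ are correctly quoted, and the resulting ratios $\tfrac{q^s+1}{2q^s}$, $\tfrac{q^s-1}{2q^s}$ match the two branches of \eqref{macwill:eq3} in all three parity/square cases. What your approach buys is conceptual transparency (the $q$-binomial factor, the equidistribution of characters in odd rank, and the source of the $(-1)^s$ dependence all become visible), at the cost of importing the classification of nondegenerate forms and the orthogonal group orders; MacWilliams' proof, or your suggested alternative of a character-tracking first-row recursion in the spirit of Section~\ref{sec:4.1}, stays self-contained and elementary but is longer and less illuminating about where the case distinctions come from.
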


We now give several propositions that culminate in a recurrence for the number of symmetric matrices over $\F_q$ of rank $r$ with $k$ zeroes on the diagonal (Proposition \ref{symm:recranknumzeroes}). We use this recurrence to enumerate invertible symmetric matrices over $\F_q$ with zero diagonal (Corollary \ref{symm:enumsymminv}).  As before, we proceed by building larger matrices by adding rows and columns to smaller matrices; for each $n \times n$ symmetric matrix $B$ with zero diagonal, we consider the $(n + 1) \times (n + 1)$ matrices with zero diagonal of the form
\[
A = \begin{bmatrix} 0 & v \\ v^T & B\end{bmatrix}
\]
and analyze these matrices to write down a recursion.  First, as a warm-up, we consider matrices with all-zero diagonal by rank; the appearance of the functions $\sq^{\pm}$ indicate that a finer result is needed to reach our goal.  

\begin{proposition}\label{prop:first symmetric}
Let $B$ be a symmetric $n \times n$ matrix of rank $r\geq 1$ and quadratic character $\psi(\delta)$.  Of the $q^n$ symmetric matrices $A = \begin{bmatrix} 0 & v \\ v^T & B\end{bmatrix}$ we have 
\begin{compactenum}[\rm (i)]
\item $\sq^{\psi(\delta)}(r)$ matrices of rank $r$, 
\item $q^r - \sq^{\psi(\delta)}(r)$ matrices of rank $r + 1$, and
\item $q^n - q^r$ matrices of rank $r + 2$.
\end{compactenum}
\end{proposition}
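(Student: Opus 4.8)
The plan is to reduce $B$ to diagonal form by a symmetric change of basis and then pivot on its invertible block, thereby converting the rank question into a question about zeros of a quadratic form.

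First I would invoke Remark~\ref{symm:symmdiagon} to write $B = MDM^T$ with $M \in \GL(n,q)$ and $D = \diag(1^r, 0^{n-r})$ when $\psi(\delta) = +$, or $D = \diag(1^{r-1}, z, 0^{n-r})$ for a fixed nonsquare $z$ when $\psi(\delta) = -$. Replacing $A$ by $\begin{bmatrix} 1 & 0 \\ 0 & M^{-1} \end{bmatrix} A \begin{bmatrix} 1 & 0 \\ 0 & M^{-T} \end{bmatrix}$ is a congruence, so it preserves symmetry and rank, keeps the $(1,1)$-entry equal to $0$, and replaces the first row $(0, v)$ by $(0, vM^{-T})$; since $v \mapsto vM^{-T}$ permutes $\F_q^n$, it suffices to establish the three counts for $A = \begin{bmatrix} 0 & v \\ v^T & D \end{bmatrix}$ as $v$ ranges over $\F_q^n$. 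Write $v = (v', v'')$ with $v' = (v_1, \dots, v_r)$ and $v'' = (v_{r+1}, \dots, v_n)$, and let $D_r$ be the invertible upper-left $r \times r$ block of $D$.

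Next I would carry out symmetric Gaussian elimination, using the rows and columns through $D_r$ to clear $v'$ from the first row and (by symmetry) from the first column: this is the congruence $A \mapsto PAP^T$ with $P = \begin{bmatrix} 1 & -v'D_r^{-1} & 0 \\ 0 & I_r & 0 \\ 0 & 0 & I_{n-r} \end{bmatrix}$, and a short computation shows that $PAP^T$ equals (after moving the index $1$ past the block of $D_r$) the block-diagonal matrix $D_r \oplus \begin{bmatrix} c & v'' \\ (v'')^T & 0 \end{bmatrix}$, where $c = -v'D_r^{-1}(v')^T$ and $v''$ is unchanged. Hence $\rk A = r + \rk \begin{bmatrix} c & v'' \\ (v'')^T & 0 \end{bmatrix}$. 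If $v'' \neq 0$ this last matrix is symmetric, nonzero, and cannot have rank $1$ (a rank-one symmetric matrix $\lambda w w^T$ with zero lower-right block must have $w$ supported on its first coordinate, forcing $v'' = 0$), so it has rank exactly $2$ and $\rk A = r+2$; the number of such $v$ is $q^r(q^{n-r}-1) = q^n - q^r$, which is (iii). If $v'' = 0$ the matrix is $\diag(c, 0, \dots, 0)$, so $\rk A = r$ when $c = 0$ and $\rk A = r+1$ when $c \neq 0$.

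It then remains to count $v' \in \F_q^r$ with $c = 0$. Since $c = -\bigl(v_1^2 + \cdots + v_{r-1}^2 + d_r^{-1} v_r^2\bigr)$, where $d_r$ is the last diagonal entry of $D_r$ (namely $1$ if $\psi(\delta) = +$ and $z$ if $\psi(\delta) = -$), the condition $c = 0$ describes the zero set of a nondegenerate diagonal quadratic form in $r$ variables whose discriminant is a square precisely when $\psi(\delta) = +$. By the classification of quadratic forms over $\F_q$ and the evaluation of their zero counts (\cite[Thm.~1.26]{wan}), after a harmless rescaling of $v_r$ this form has exactly $\sq^{\psi(\delta)}(r)$ zeros, which gives (i); and (ii) follows by subtraction, $q^n - (q^n - q^r) - \sq^{\psi(\delta)}(r) = q^r - \sq^{\psi(\delta)}(r)$. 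The only steps requiring care are the bookkeeping in the elimination (that pivoting on $D_r$ leaves $v''$ intact and produces exactly the quadratic-form value $c$) and matching the zero count of that form with $\sq^{\psi(\delta)}(r)$ via the standard classification; I do not expect either to be a genuine obstacle.
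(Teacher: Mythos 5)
Your proposal is correct and follows essentially the same route as the paper: reduce $B$ to $D$ by congruence via Remark~\ref{symm:symmdiagon}, observe that rank $r+2$ occurs exactly when the tail $v''$ is nonzero (giving $q^n-q^r$), and otherwise eliminate $v'$ against the invertible block so that the rank-$r$ case is governed by the vanishing of the quadratic form $v_1^2+\cdots+v_{r-1}^2+\delta^{-1}v_r^2$, counted by $\sq^{\psi(\delta)}(r)$, with rank $r+1$ obtained by subtraction. Your write-up merely supplies more explicit bookkeeping (the matrix $P$ and the rank-two argument) than the paper does.
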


\begin{proof} 
The proof proceeds along the same lines as Proposition \ref{prop:Greta's recursion with k zeroes}.  As noted in Remark \ref{symm:symmdiagon}, if $r = \rk(B) > 0$ and $\delta \in \F_q^*$ is such that $\psi(B) = \psi(\delta)$ then there exists $M \in \GL(n, q)$ such that $B = M \cdot \diag(1^{r - 1},\delta, 0) \cdot M^T$.  In this case, setting $D=\diag(1^{r-1},\delta,0)$, we have 
\[
\begin{bmatrix} 1 & 0 \\ 0 & M \end{bmatrix} \cdot  \begin{bmatrix} 0 & v \\ v^T & B \end{bmatrix} \cdot \begin{bmatrix} 1 & 0 \\ 0 & M^T \end{bmatrix} = \begin{bmatrix} 0 & v M^T \\ (vM^T)^T & D \end{bmatrix}.
\]
Since $M$ is invertible and we are interested in letting $v$ vary over all row vectors of length $n$, we may define $w = v M^T$ and let $w$ vary over all row vectors of length $n$, instead.
\begin{description}
\item[Case 1:] $\rk(A) = r + 2$.  As in the results of Section \ref{sec:Greta's recursion}, we have $\rk(A) = r + 2$ if and only if $w$ has a nonzero entry among its last $n - r$ entries (or equivalently, if $v \not \in \rowspace(B)$).  There are $q^n - q^r$ such choices for $w$, so $q^n - q^r$ matrices $A$ have rank $r + 2$.

\item[Case 2:] $\rk(A) = r$.  As in Case 1 of the proof of Proposition
  \ref{prop:Greta's recursion with k zeroes}, in order to have $\rk(A)
  = r$, we must have both that $w = (w_1, \ldots, w_r, 0, \ldots, 0)$
  and that using Gaussian elimination to kill the entries of $w$ and
  $w^T$ results in a matrix whose $(1, 1)$-entry is equal to $0$.
  This $(1, 1)$-entry is equal to $-(w_1^2 + \cdots + w_{r - 1}^2 +
  \delta^{-1} w_r^2)$, so there are $\sq^{\psi(\delta^{-1})}(r) =
  \sq^{\psi(\delta)}(r)$ choices of $w$ for which this entry is zero
  and thus $\sq^{\psi(\delta)}(r)$ matrices $A$ of rank $r$.

\item[Case 3:] $\rk(A) = r + 1$.  All $q^r - \sq^{\psi(\delta)}(r)$ choices of $w$ not yet accounted for result in a matrix $A$ of rank $r + 1$.
\end{description}

These three cases are exhaustive, so we have the claimed result.
\end{proof}

The appearance of the function $\sq^{\psi(B)}$ means that we cannot use Proposition \ref{prop:first symmetric} to write down a recursion.  Instead, we need a finer enumeration that also relates the character of the larger matrix $A$ to the character of $B$ and the choice of $v$.  The following result provides this recursion.  As usual, consider $B$ to be a fixed $n \times n$ symmetric matrix with all-zero diagonal.

\begin{proposition} \label{prop:second symmetric}
Let $B$ have rank $r\geq 1$ and quadratic character $\psi(\delta)$.  Of the $q^n$ total choices for $A = \begin{bmatrix} 0 & v \\ v^T & B \end{bmatrix}$, we have
\begin{compactenum}[\rm (i)]
\item $q^n-q^r$ matrices of rank $r+2$ and character $\psi(-\delta)$,

\item $\sq^{\psi(\delta)}(r)$ matrices of rank $r$ and character $\psi(\delta)$,

\item $\frac{1}{2}(\sq^{\psi(\delta)}(r+1) - \sq^{\psi(\delta)}(r))$ matrices of rank $r+1$ and character $\psi(\delta)$, and

\item the remaining $q^r - \sq^{\psi(\delta)}(r) - \frac{1}{2}(\sq^{\psi(\delta)}(r+1) - \sq^{\psi(\delta)}(r)) = q^r - \frac{1}{2} (\sq^{\psi(\delta)}(r + 1) + \sq^{\psi(\delta)}(r))$ matrices of rank $r+1$ and character $-\psi(\delta)$.
\end{compactenum}
\end{proposition}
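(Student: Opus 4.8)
The plan is to carry out the same case analysis as in the proof of Proposition \ref{prop:first symmetric}, but now tracking the quadratic character of $A$ alongside its rank, and to reduce the one genuinely new point — splitting the rank-$(r+1)$ matrices by character — to a question about the value distribution of a fixed nondegenerate quadratic form.

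First I would apply the normalization used in the proof of Proposition \ref{prop:first symmetric}: by Remark \ref{symm:symmdiagon} there is $M \in \GL(n,q)$ with $B = M\,\diag(1^{r-1},\delta,0^{n-r})\,M^T$, and the block congruence fixing the first coordinate and carrying $B$ to $D := \diag(1^{r-1},\delta,0^{n-r})$ replaces $A$ by $\begin{bmatrix}0 & w\\ w^T & D\end{bmatrix}$, with $w$ now ranging over all of $\F_q^n$. Since congruence preserves both rank and character, it suffices to analyze this normal form. Write $w = (w_{(1)},w_{(2)})$ with $w_{(1)}\in\F_q^r$ and $w_{(2)}\in\F_q^{n-r}$, and use the invertible block $D_r := \diag(1^{r-1},\delta)$ to clear $w_{(1)}$ by a symmetric reduction: $A$ becomes congruent to $\diag(c)\oplus D_r\oplus 0_{n-r}$ with the extra row and column $w_{(2)}$ still attached to the $\diag(c)$ slot, where $c = -(w_1^2+\cdots+w_{r-1}^2+\delta^{-1}w_r^2)$. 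This splits into three cases. If $w_{(2)}\neq 0$ (which happens for $q^n-q^r$ vectors $w$), a further symmetric reduction turns the $c$-slot together with a nonzero coordinate of $w_{(2)}$ into a nondegenerate $2\times2$ block of determinant $-1$, decoupled from $D_r$, so $A$ has rank $r+2$ and character $\psi(\delta)\psi(-1)=\psi(-\delta)$; this gives (i). If $w_{(2)}=0$ and $c=0$ ($\sq^{\psi(\delta)}(r)$ vectors $w$, exactly as in Proposition \ref{prop:first symmetric}), then $A$ is congruent to $D_r\oplus 0$, of rank $r$ and character $\psi(\delta)$; this gives (ii). If $w_{(2)}=0$ and $c\neq0$, then $A$ is congruent to $\diag(c,1^{r-1},\delta,0^{n-r})$, of rank $r+1$ and character $\psi(c\delta)$, which equals $\psi(\delta)$ precisely when $c$ is a nonzero square.

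The one remaining count is the number of $w_{(1)}\in\F_q^r$ with $c$ a nonzero square, i.e. with the nondegenerate form $Q(w):=w_1^2+\cdots+w_{r-1}^2+\delta^{-1}w_r^2$ (of character $\psi(\delta^{-1})=\psi(\delta)$) attaining a nonzero value $t$ with $\psi(t)=\psi(-1)$. The substitution $w\mapsto\lambda w$ shows that the number $N(t)$ of representations of a nonzero $t$ by $Q$ depends only on $\psi(t)$; write it $N_+$ or $N_-$. To evaluate $N_{\psi(t)}$ I would adjoin a variable: counting the zeros of $Q(w)-tx^2$ in $r+1$ variables by the value of $x$ gives $\sq^{\psi(\delta)}(r)+(q-1)N_{\psi(t)}$, while $Q(w)-tx^2$ is a nondegenerate form of character $\psi(-\delta t)$, so its zero count is also $\sq^{\psi(-\delta t)}(r+1)$; hence $N_{\psi(t)}=\bigl(\sq^{\psi(-\delta t)}(r+1)-\sq^{\psi(\delta)}(r)\bigr)/(q-1)$. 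There are $(q-1)/2$ nonzero elements of Legendre symbol $\psi(-1)$, so the desired count is $\tfrac{q-1}{2}N_{\psi(-1)}=\tfrac12\bigl(\sq^{\psi(\delta)}(r+1)-\sq^{\psi(\delta)}(r)\bigr)$ (taking $t=-1$ and using $\psi(\delta^{-1})=\psi(\delta)$), which is (iii); and (iv) follows by subtracting (iii) from the full rank-$(r+1)$ count $q^r-\sq^{\psi(\delta)}(r)$ supplied by Proposition \ref{prop:first symmetric}.

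The main obstacle I anticipate is bookkeeping rather than ideas: one must carefully propagate quadratic characters through the chain of symmetric reductions and keep the sign conventions straight (using $\psi(\delta^{-1})=\psi(\delta)$ and the behaviour of $\psi(-1)$). The value-distribution identity above is a clean one-variable argument, and as a consistency check every count in the proposition can be read off directly from the explicit formulas for $\sq^{\pm}$ in Table \ref{table:sq}.
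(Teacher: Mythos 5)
Your proposal is correct and follows essentially the same route as the paper: the same congruence normalization to $D=\diag(1^{r-1},\delta,0^{n-r})$, the same three-case split by rank with explicit congruence normal forms determining the character, and the same key device for case (iii) of adjoining a square variable so that the count reduces to zeros of a nondegenerate form in $r+1$ variables. The only (immaterial) difference is that you pass through the representation numbers $N_{\pm}$ and sum over the $(q-1)/2$ admissible values of $t$, whereas the paper directly double-counts the solutions of $x^2+w_1^2+\cdots+w_{r-1}^2+\delta^{-1}w_r^2=0$ with $x\neq 0$, which is the same computation.
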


\begin{proof}
As in the preceding results, write $D = \diag(1^{r - 1}, \delta, 0^{n - r})$ and choose $M \in \GL(n, q)$ such that $B = M \cdot D \cdot M^T$.  Then we wish to consider all $q^n$ matrices of the form 
\begin{equation}\label{eqn:diagonalize}
A = \begin{bmatrix} 1 & 0 \\ 0 & M\end{bmatrix} \cdot \begin{bmatrix} 0 & w \\ w^T & D\end{bmatrix} \cdot \begin{bmatrix} 1 & 0 \\ 0 & M^T \end{bmatrix}
\end{equation}
as $w$ varies in $\F_q^n$. 

\begin{description}
\item[Case 1:] $\rk(A) = r + 2$.  By applying further row and column reductions in Equation \eqref{eqn:diagonalize} we may write 
\[
A = R \cdot \begin{bmatrix} 
0      & 0 & \cdots & 0 & 1 \\ 
0      &   &        &   &   \\
\vdots &   &   D    &   &   \\ 
0      &   &        &   &   \\ 
1      &   &        &   & 
\end{bmatrix} \cdot R^{T}
\]
for some invertible matrix $R$, whence $\psi(A) = \psi\left(\begin{bmatrix} 0 & 0 & 1 \\ 0 & \delta & 0 \\ 1 & 0 & 0\end{bmatrix}\right) = \psi(-\delta)$.  Thus, all $q^n - q^r$ matrices $A$ of rank $r + 2$ have character $\psi(-\delta)$.

\item[Case 2:] $\rk(A) = r$.  In this case, there is some invertible $R$ such that $R \cdot A \cdot R^T = \begin{bmatrix} 0 & 0 \\ 0 & D\end{bmatrix}$, whence $\psi(A) = \psi(B) = \psi(\delta)$.  Thus, all $\sq^{\psi(\delta)}(r)$ matrices $A$ of rank $r$ have character $\psi(\delta)$.

\item[Case 3:] $\rk(A) = r + 1$.  In this case we have $w = (w_1,
  \ldots, w_r, 0, \ldots, 0)$.  Setting $a = -(w_1^2 + \cdots + w_{r -
    1}^2 + \delta^{-1} w_r^2) \neq 0$, there exists some invertible
  $R$ such that
\[
A = R \cdot \begin{bmatrix} a & 0 \\ 0 & D \end{bmatrix} \cdot R^T.
\]
If $a \in \F_q^{*2}$ then $\psi(A) = \psi(B) = \psi(\delta)$, and
otherwise $\psi(A) = - \psi(\delta)$.  We have $a \in \F_q^{*2}$ if
and only if there exists $x \in \F_q^*$ such that $x^2 + w_1^2 +
\cdots + w_{r - 1}^2 + \delta^{-1} w_r^2 = 0$, and the number of
choices of $w$ for which this equation has a solution is $\frac{1}{2}
\left(\sq^{\psi(\delta)}(r + 1) - \sq^{\psi(\delta)}(r)\right)$.
\end{description}
These cases are exhaustive, so we have our result.
\end{proof}

\begin{corollary}
We have the recursion
\begin{multline*}
\sym^{\psi}_0(n+1, r+1) = 
(q^n - q^{r-1})\sym_0^{\psi\cdot\psi(-1)}(n,r-1) + 
\sq^{\psi}(r + 1)\sym_0^{\psi}(n,r+1) + \\
\frac{1}{2}\left(\sq^{\psi}(r+1)  - \sq^{\psi}(r)\right) \sym_0^{\psi}(n,r) + \\
\left(q^{r} - \frac12(\sq^{-\psi}(r+1) + \sq^{-\psi}(r))\right)\sym^{-\psi}_0(n,r).
\end{multline*}
\end{corollary}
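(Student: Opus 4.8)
The plan is to parametrize $(n+1) \times (n+1)$ symmetric matrices with all-zero diagonal by their lower-right $n \times n$ block, exactly as in the proof of Proposition~\ref{prop:Greta's recursion with k zeroes}: write $A = \begin{bmatrix} 0 & v \\ v^T & B \end{bmatrix}$ where $B$ is symmetric with all-zero diagonal and $v$ ranges over all row vectors of length $n$. Since adjoining one row and one column changes the rank by $0$, $1$, or $2$, a matrix $A$ of rank $r + 1$ arises precisely from blocks $B$ of rank $r - 1$, $r$, or $r + 1$. Proposition~\ref{prop:second symmetric}, applied to a fixed $B$, already records how many of the $q^n$ extensions of $B$ have each given rank and character, and these counts depend only on $\rk(B)$ and $\psi(B)$. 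So the proof amounts to summing the relevant outputs of Proposition~\ref{prop:second symmetric} over all admissible $B$, organized according to $\rk(B)$.

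Carrying this out, I would treat the three possible values of $\rk(B)$ in turn. If $\rk(B) = r - 1$ and $B$ has character $\psi(\delta)$, then part~(i) of Proposition~\ref{prop:second symmetric} produces $q^n - q^{r-1}$ matrices $A$ of rank $r + 1$, all of character $\psi(-\delta)$; these have character $\psi$ exactly when $\psi(\delta) = \psi \cdot \psi(-1)$, which yields the summand $(q^n - q^{r-1}) \sym_0^{\psi \cdot \psi(-1)}(n, r - 1)$. If $\rk(B) = r + 1$, applying part~(ii) with its ``$r$'' taken to be $r + 1$ produces $\sq^{\psi(\delta)}(r+1)$ matrices of rank $r + 1$ and character $\psi(\delta)$, hence the summand $\sq^{\psi}(r+1) \sym_0^{\psi}(n, r + 1)$. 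If $\rk(B) = r$, then $A$ has rank one more than $B$, so parts~(iii) and~(iv) apply: part~(iii) contributes $\frac{1}{2}(\sq^{\psi(\delta)}(r+1) - \sq^{\psi(\delta)}(r))$ matrices of character $\psi(\delta)$ and part~(iv) contributes $q^r - \frac{1}{2}(\sq^{\psi(\delta)}(r+1) + \sq^{\psi(\delta)}(r))$ matrices of character $-\psi(\delta)$; matching against the target character $\psi$ gives the third and fourth summands. Adding the four contributions gives the claimed recursion.

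The one point that needs care is the edge case $\rk(B) = 0$, i.e.\ $B = \mathbf{0}$, which is not covered by Proposition~\ref{prop:second symmetric} as stated (it assumes rank $\ge 1$); this is relevant when the left-hand side is $\sym_0^{\psi}(n+1, 1)$. I would handle it by a direct check, using the convention $\psi(\mathbf 0) = +$ and the values $\sq^{\pm}(0) = \sq^{\pm}(1) = 1$ from Table~\ref{table:sq}, observing that in the $r = 0$ instance every term on the right either vanishes or multiplies $\sym_0^{\pm\psi}(n, 0)$ with a vanishing coefficient, consistently with the fact that there are no rank-$1$ symmetric matrices with zero diagonal in odd characteristic. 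Apart from this, the argument is pure bookkeeping; the only place one can go wrong is the character arithmetic, since the counts in Proposition~\ref{prop:second symmetric} are indexed by $\psi(B)$ and must be re-expressed in terms of the target character $\psi(A)$ in each of the four branches (the relevant relations being $\psi(\delta) = \psi \cdot \psi(-1)$, $\psi(\delta) = \psi$, $\psi(\delta) = \psi$, and $\psi(\delta) = -\psi$, respectively).
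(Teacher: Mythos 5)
Your proposal is correct and is precisely the argument the paper intends: the corollary is stated as an immediate consequence of Proposition~\ref{prop:second symmetric}, obtained by parametrizing rank-$(r+1)$ matrices by their lower-right block $B$ and summing over the possible ranks and characters of $B$, exactly as the paper itself does in the refined setting via Equation~\eqref{symm:pfreceqB}. One small note: the degenerate block $B=\mathbf{0}$ also enters when $r=1$ (through the rank-$(r-1)$ term of part~(i)), not only when the left-hand side is $\sym_0^{\psi}(n+1,1)$, but the convention $\psi(\mathbf{0})=+$ you invoke handles that case in the same way, as the paper observes in the proof of Proposition~\ref{symm:recranknumzeroes}.
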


We are now ready to derive a recurrence for the number of symmetric matrices over $\F_q$ of rank $r$ with a prescribed number $k$ zeroes on the diagonal. We use the same approach as in Proposition~\ref{prop:Greta's recursion with k zeroes} and Lemma~\ref{lemma:numberzeroes}. 

\subsection{Recursions for fixed rank based on number of zeroes on the diagonal} \label{sec:4.1}

Let $\sym_0(n,k,r)$ be the number of $n \times n$ symmetric matrices with rank $r$ and the first $k$ diagonal elements equal to $0$, with no other restrictions. Thus, we have $\sym_0(n,n,r) = \sym_0(n,r)$ while $\sym_0(n,0,r) = \sym(n,r)$ (the value of which is given in Equation \eqref{macwill:eq1}).  Let $\sym_0^{\psi}(n,r,k)$ count only those matrices that have character $\psi$.

We define $\sym_0(r,\psi; n + 1, r',*)$  (respectively, $\sym_0^{\psi'}(r,\psi; n + 1, r', *)$) to be the number of $(n + 1) \times (n + 1)$ symmetric matrices $A$ of rank $r'$ (respectively, and character $\psi'$) associated to any matrix $B$ of rank $r$ and character $\psi$, and we define $\sym_0(r,\psi; n + 1, r', 0)$ (respectively, $\sym_0^{\psi'}(r,\psi; n + 1, r', 0)$)  to be the number of such matrices (respectively, of character $\psi'$) where in addition $a=A_{11}=0$.

\begin{proposition} \label{symm:recranknumzeroes} If $r$ is odd, define
  $t=0$ if $(-1)^{(r+1)/2}$ is a square in $\F_q$ and $t=1$
  otherwise. Then
\begin{multline*}
  \sym_0^{\psi}(n+1,k+1,r+1)  = \frac{1}{q} \sym_0^{\psi}(n+1,k,r+1) +  (-1)^t \cdot \psi\cdot\left(\frac{1}{2}\sym_0(n,k,r) + \sym_0^{\psi}(n,k,r+1)\right)\times \\
  \times (q^{(r+1)/2} - q^{(r-1)/2}).
\end{multline*}
If $r$ is even and $r > 0$, define $t=0$ if $(-1)^{r/2}$ is a
square in $\F_q$ and $t=1$ otherwise. Then
\[
\sym_0^{\psi}(n+1,k+1,r+1) = \frac{1}{q} \sym_0^{\psi}(n+1,k,r+1) -
\frac{(-1)^{t}}{2}(\sym_0^{+}(n,k,r)-\sym_0^{-}(n,k,r))(q^{r/2} -
q^{r/2-1}).
\]
We have initial values
\[
\sym_0^{\psi}(n+1,k+1,1) =  \frac{1}{2} \sym_0(n+1,k+1,1) = \frac{q - 1}{2}\sum_{i=0}^{n-k-1} q^{i} = \frac{q^{n-k} - 1}{2},
\]
\[
\sym_0^{+}(n,0,2s+1) = \frac{1}{2} \sym(n,2s+1),
\]
and 
\[
\sym_0^{+}(n,0,2s) = \frac{1}{2} \frac{q^s + (\psi(-1))^s}{q^s} \sym(n,2s).
\]
\end{proposition}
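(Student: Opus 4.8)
The plan is to rerun the ``remove one prescribed diagonal zero'' induction of Proposition~\ref{prop:Greta's recursion with k zeroes}, now carrying the quadratic character along and feeding in the refined count of Proposition~\ref{prop:second symmetric} in place of the bilinear bookkeeping used there. As in that proof, $B$ will denote the lower-right $n\times n$ block of $A=\begin{bmatrix} a & v\\ v^T & B\end{bmatrix}$, and all the counts below depend on $B$ only through $\rk B$ and $\psi(B)$, by the congruence-diagonalization argument of Proposition~\ref{prop:second symmetric}. The overall structure is: (1) upgrade Proposition~\ref{prop:second symmetric} to the ``$a$ unrestricted'' case and compare, producing the analogues of \eqref{gr:rec1}--\eqref{gr:rec3}; (2) assemble as in \eqref{greta:pfreceqA}--\eqref{greta:pfreceqB}, absorb the $\tfrac1q$-terms, and simplify via Table~\ref{table:sq}; (3) dispatch the initial values.

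\textbf{Step 1 (the ``$a$ unrestricted'' counts).} With $B$ of rank $\rho\ge 1$ and character $\pi=\psi(\delta)$, diagonalize $B=MDM^T$ with $D=\diag(1^{\rho-1},\delta,0^{n-\rho})$ and write $w=vM^T=(w',w'')$, $w'\in\F_q^{\rho}$. Then $\rk A=\rho+2$ exactly when $w''\ne 0$ (for any $a$), and the hyperbolic-plane computation of Case~1 of Proposition~\ref{prop:second symmetric} gives all such $A$ character $\psi(-\delta)$; when $w''=0$, clearing $w'$ by Gaussian elimination leaves $(1,1)$-entry $a-(w_1^2+\cdots+w_{\rho-1}^2+\delta^{-1}w_\rho^2)$, so for each $w'$ exactly one $a$ gives $\rk A=\rho$ (character $\pi$) and the other $q-1$ give $\rk A=\rho+1$ with residual entry ranging over $\F_q^\times$, hence equidistributed between the two characters. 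In the notation of Section~\ref{sec:4.1} this yields $\sym_0^{\pi}(\rho,\pi;n+1,\rho,*)=q^{\rho}$, $\sym_0^{\pm\pi}(\rho,\pi;n+1,\rho+1,*)=\tfrac12 q^{\rho}(q-1)$, $\sym_0^{\psi(-1)\pi}(\rho,\pi;n+1,\rho+2,*)=q^{n+1}-q^{\rho+1}$, and zero otherwise (the convention $\psi(\mathbf{0})=+$ handles $\rho=0$, which can occur when $r=1$). Comparing with the $a=0$ counts of Proposition~\ref{prop:second symmetric} produces, exactly as for \eqref{gr:rec1}--\eqref{gr:rec3}, relations of the form $\sym_0^{\psi'}(\rho,\pi;n+1,r',0)=\tfrac1q\,\sym_0^{\psi'}(\rho,\pi;n+1,r',*)+\kappa$, where $\kappa=0$ when $r'=\rho+2$, $\kappa=\sq^{\pi}(\rho)-q^{\rho-1}$ when $r'=\rho$, and $\kappa$ equals the relevant $a=0$ count of Proposition~\ref{prop:second symmetric} minus $\tfrac12(q^{\rho}-q^{\rho-1})$ when $r'=\rho+1$.

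\textbf{Step 2 (assembly and simplification).} I would then parametrize the $(n+1)\times(n+1)$ symmetric matrices of rank $r+1$ and character $\psi$ with first $k+1$ diagonal entries zero by $B$ (which has first $k$ diagonal entries zero and rank $r-1$, $r$, or $r+1$ for a nonzero contribution), as in \eqref{greta:pfreceqA}--\eqref{greta:pfreceqB}, split by $(\rk B,\psi(B))$, and substitute the relations of Step~1. The $\tfrac1q$-terms sum over $B$ to $\tfrac1q\,\sym_0^{\psi}(n+1,k,r+1)$ (after the same harmless relabeling of the unconstrained diagonal position used in Proposition~\ref{prop:Greta's recursion with k zeroes}); the $\kappa=0$ in the rank-jump-by-two case kills any $\sym_0(n,k,r-1)$ term; and the remaining $\kappa$'s give the intermediate recursion
\[
\sym_0^{\psi}(n+1,k+1,r+1)=\tfrac1q\sym_0^{\psi}(n+1,k,r+1)+(\sq^{\psi}(r+1)-q^r)\sym_0^{\psi}(n,k,r+1)+c_1\sym_0^{\psi}(n,k,r)+c_2\sym_0^{-\psi}(n,k,r),
\]
with $c_1=\tfrac12(\sq^{\psi}(r+1)-\sq^{\psi}(r)-q^r+q^{r-1})$ and $c_2=\tfrac12(q^r+q^{r-1}-\sq^{-\psi}(r+1)-\sq^{-\psi}(r))$ (the $\sq^{-\psi}$ enters through the branch $\psi(B)=-\psi$). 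Now substitute Table~\ref{table:sq}, splitting on the parity of $r$: for $r$ odd, $\sq^{\psi}(r)=q^{r-1}$ and $\sq^{\pm}(r+1)=q^r\pm(-1)^t(q^{(r+1)/2}-q^{(r-1)/2})$, so the coefficients of $\sym_0^{\psi}(n,k,r+1)$, of $\sym_0^{\psi}(n,k,r)$, and of $\sym_0^{-\psi}(n,k,r)$ become $(-1)^t(q^{(r+1)/2}-q^{(r-1)/2})$, $\tfrac12(-1)^t(q^{(r+1)/2}-q^{(r-1)/2})$, $\tfrac12(-1)^t(q^{(r+1)/2}-q^{(r-1)/2})$ for $\psi=+$ (the negatives for $\psi=-$), and $\tfrac12\sym_0^{+}(n,k,r)+\tfrac12\sym_0^{-}(n,k,r)=\tfrac12\sym_0(n,k,r)$ gives the first displayed recursion; for $r$ even and $r>0$, $\sq^{\psi}(r+1)=q^r$ kills the $\sym_0^{\psi}(n,k,r+1)$ term and $\sq^{\pm}(r)=q^{r-1}\pm(-1)^t(q^{r/2}-q^{r/2-1})$ turns $c_1,c_2$ into $\mp\tfrac12(-1)^t(q^{r/2}-q^{r/2-1})$, giving the second. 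I expect this parity-by-parity collapse --- keeping the four $\sq^{\pm\psi}$ terms, the $\tfrac1q$-absorption, and the $t$-sign conventions mutually consistent, and checking that the coefficients of $\sym_0^{\psi}(n,k,r+1)$ and of $\tfrac12\sym_0(n,k,r)$ coincide in the odd case --- to be the main obstacle; everything before it is a routine adaptation of Section~\ref{sec:Greta's recursion}.

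\textbf{Step 3 (initial values).} The identities $\sym_0^{+}(n,0,2s+1)=\tfrac12\sym(n,2s+1)$ and $\sym_0^{+}(n,0,2s)=\tfrac12\cdot\frac{q^s+(\psi(-1))^s}{q^s}\sym(n,2s)$ are exactly MacWilliams's \eqref{macwill:eq2} and \eqref{macwill:eq3}, since $k=0$ imposes no diagonal condition and hence $\sym_0^{+}(n,0,r)=\sym^{+}(n,r)$, rewritten with the sign convention $(\psi(-1))^s$. For the rank-$1$ value, every rank-$1$ symmetric matrix with the prescribed zero diagonal is $c\,vv^T$ for some $c\in\F_q^{\times}$ and some nonzero $v$ supported off the first $k+1$ coordinates, uniquely up to the scaling $(c,v)\mapsto(\lambda^{-2}c,\lambda v)$; this gives $q^{n-k}-1$ such matrices, and since $\psi(c\,vv^T)=\psi(c)$ depends only on the square class of $c$, exactly half have each character, whence $\sym_0^{\psi}(n+1,k+1,1)=\tfrac12(q^{n-k}-1)=\tfrac12\sym_0(n+1,k+1,1)=\tfrac{q-1}{2}\sum_{i=0}^{n-k-1}q^i$.
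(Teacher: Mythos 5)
Your proposal is correct and follows essentially the same route as the paper's proof: the same three-case comparison of the $a$-unrestricted and $a=0$ counts for a diagonalized block $B$ (the paper's Equations \eqref{symm:rec1}--\eqref{symm:rec3}), the same assembly over the lower-right block as in \eqref{symm:pfreceqA}--\eqref{symm:pfreceqB} with absorption of the $\tfrac1q$-terms, the same parity-by-parity simplification via Table~\ref{table:sq} (your coefficients $c_1,c_2$ and the vanishing of the rank-$(r-1)$ contribution all check out), and the same treatment of the initial values, including the convention $\psi(\mathbf{0})=+$. The only cosmetic difference is the rank-one initial value, which you obtain by parametrizing rank-one symmetric matrices as $c\,vv^T$ with $v$ supported off the constrained coordinates, whereas the paper permutes a nonzero diagonal entry into the corner and factors; both yield $\tfrac12(q^{n-k}-1)$.
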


\begin{proof} 
As before, consider a symmetric $n \times n$ matrix $B$ of rank $r$ with $k$ prescribed zeroes on the diagonal, and choose $\delta \in \F_q^\times$ such that $\psi(B) = \psi(\delta)$; the matrix $A$ is obtained from $B$ by gluing on one row $v$ and one column $v^T$, and the rank of $A$ is then one of $r$, $r + 1$ and $r + 2$.  We split in three cases but most of the work has been done in Propositions \ref{prop:first symmetric} and \ref{prop:second symmetric}.  As in the preceding results, write $D = \diag(1^{r - 1}, \delta, 0^{n - r})$ and choose $M \in \GL(n, q)$ such that $B = M \cdot D \cdot M^T$.  Then we wish to consider all matrices of the form 
\begin{equation}\label{eqn:diagonalize:firstentry}
A = \begin{bmatrix} 1 & 0 \\ 0 & M\end{bmatrix} \cdot \begin{bmatrix} a & w \\ w^T & D\end{bmatrix} \cdot \begin{bmatrix} 1 & 0 \\ 0 & M^T \end{bmatrix}
\end{equation}
as $w$ varies in $\F_q^n$ and $a$ either varies over $\F_q$ or is fixed at $a = 0$. 

\begin{description}
\item[Case 1:] $\rk(A)=r+2$. As in the first case of the proof of Proposition \ref{prop:second symmetric}, we have $\psi(A)=\psi(-\delta)=\psi(-1)\cdot\psi(\delta)$ regardless of the value of the $(1,1)$-entry $a$ of $A$.  It follows immediately that
\begin{equation}\label{symm:rec1}
\sym_0^{\psi \cdot \psi(-1)}(r,\psi; n+1,r+2,0) = \frac{1}{q} \sym_0^{\psi \cdot \psi(-1)}(r,\psi; n+1,r+2,*),
\end{equation}
i.e., exactly a $q^{-1}$-fraction of all such matrices have
$(1,1)$-entry equal to $0$.  (This recursion holds when $r = 0$, i.e.,
when $B = {\bf 0}$ is the all-zero matrix, due to our convention that
$\psi({\bf 0}) = +$.)

\item[Case 2:] $\rk(A)=r$.  Much like in Case 2 of Proposition
  \ref{prop:first symmetric}, applying further symmetric row and
  column operations on the right-hand side of Equation
  \ref{eqn:diagonalize:firstentry} gives $A = R \cdot \begin{bmatrix}
    b & 0 \\ 0 & D\end{bmatrix} \cdot R^T$, where $b = a - (w_1^2 +
  \cdots + w_{r - 1}^2 + \delta^{-1} w_r^2)$.  Thus $\rk(A) = r$ if
  and only if $a - (w_1^2 + \cdots + w_{r - 1}^2 + \delta^{-1} w_r^2)
  = 0$, and in this case $\psi(A) = \psi(B) = \psi(\delta)$.  Now we
  consider two sub-cases:
\begin{compactenum}[(i)]
\item If $a$ may take any value in $\F_q$, then the value of $a$ is determined by the choice of $w$ and so $\sym_0^{\psi(\delta)}(r, \psi(\delta); n + 1, r, *) = q^r$.

\item If we restrict to $a = 0$ then by Case 2 of the proof of Proposition \ref{prop:second symmetric} we have that $\sym_0^{\psi(\delta)}(r,\psi(\delta); n + 1, r, 0) = \sq^{\psi(\delta)}(r)$.
\end{compactenum}
Using Table~\ref{table:sq}, the above two pieces of information imply
that
\begin{equation}\label{symm:rec2}
  \sym_0^{\psi}(r,\psi; n+1,r,0) = \begin{cases}
    \displaystyle \frac{1}{q}\sym_0^{\psi}(r,\psi; n+1,r,*),                           & r \text{ odd},\\
    \displaystyle \frac{1}{q}\sym_0^{\psi}(r,\psi; n+1,r,*) + (-1)^t
    \cdot \psi \cdot (q^{r/2} - q^{r/2-1}), & r \text{ even}. 
\end{cases}
\end{equation}
where $t=0$ if $(-1)^{r/2}$ is a square and $t=1$ otherwise.

\item[Case 3:] $\rk(A) = r + 1$.  Beginning as in Case 2, we must have
  $b = a - (w_1^2 + \cdots + w_{r - 1}^2 + \delta^{-1} w_r^2) \neq 0$.
  In this case, $\psi(A)=\psi(b)\psi(B)$.  We consider two sub-cases:
\begin{compactenum}[(i)]
\item If $a$ may take any value in $\F_q$ then for each choice of $w$ there are $\frac{q - 1}{2}$ choices of $a$ for which $\psi(b) = +$ and $\frac{q - 1}{2}$ choices of $a$ for which $\psi(b) = -$.  It follows that \\ $\sym_0^{\psi(\delta)}(r,\psi(\delta); n + 1, r+1, *) =\sym_0^{-\psi(\delta)}(r,\psi(\delta); n + 1, r+1, *)=\frac{1}{2}(q-1)q^{r}$.

\item If we restrict to $a = 0$ then we must count solutions to $b =
  -(w_1^2 + \cdots + w_{r - 1}^2 + \delta^{-1} w_r^2) \neq 0$
  depending on whether $b$ is a square.  When $b$ is a square (i.e.,
  when $\psi(A) = \psi(B)$) there are $\sq^{\psi(\delta)}(r + 1)$
  solution sets $(w_1, \ldots, w_r, \sqrt{b})$ to this equation.
  However, in $\sq^{\psi(\delta)}(r)$ of these we have $b = 0$, and
  the others are counted twice, once for each square root of $b$.
  Thus $\sym_0^{\psi(\delta)}(r,\psi(\delta); n + 1, r+1, 0) =
  \frac{1}{2} \left(\sq^{\psi(\delta)}(r + 1) -
    \sq^{\psi(\delta)}(r)\right)$.  All remaining matrices are counted
  by $\sym_0^{-\psi(\delta)}(r,\psi(\delta); n + 1, r+1, 0)$, so by
  subtraction we have $\sym_0^{-\psi(\delta)}(r,\psi(\delta); n + 1,
  r+1, 0) = q^r-\frac{1}{2} \left(\sq^{\psi(\delta)}(r + 1) +
    \sq^{\psi(\delta)}(r)\right)$.
\end{compactenum}

Using Table \ref{table:sq} and calculating carefully, we conclude that for $\epsilon \in \{\pm 1\}$ we have
\begin{multline}\label{symm:rec3}
\sym_0^{\epsilon \cdot \psi}(r,\psi; n+1,r+1,0) =\\
= \begin{cases} 
\frac{1}{q}\sym_0^{\epsilon \cdot \psi}(r,\psi; n+1,r+1,*) +  \frac{(-1)^{t_1}\cdot\epsilon \cdot \psi}{2}q^{(r-1)/2}(q-1),      & r \text{ odd},\\
\frac{1}{q}\sym_0^{\epsilon \cdot \psi}(r,\psi; n+1,r+1,*)  - \frac{(-1)^{t_2}\cdot\psi}{2}q^{r/2-1}(q-1),   & r \text{ even},
\end{cases}
\end{multline}
where $t_1=0$ if $(-1)^{(r+1)/2}$ is a square in $\F_q$ and $t_1 = 1$ otherwise, and $t_2=0$ if  $(-1)^{r/2}$ is a square in $\F_q$ and $t_2=1$ otherwise. 
\end{description}

As in the proof of Proposition \ref{prop:Greta's recursion with k zeroes}, we now change our perspective and consider the set of all $(n + 1)
\times (n + 1)$ symmetric matrices $A$ of rank $r + 1$ and character $\psi$ whose first $k + 1$
diagonal entries are equal to $0$.  Parametrizing these matrices by
the $n \times n$ submatrix $B$ that results from removing the first
row and first column, we have
\begin{equation} \label{symm:pfreceqA}
\sym_0^{\psi}(n + 1, k + 1, r + 1) = \sum_{B} \sym_0^{\psi}(\rk(B),\psi(B); n + 1, r + 1, 0)
\end{equation}
where the sum is over all $n \times n$ symmetric matrices $B$ whose first $k$ diagonal entries are zero.  The summands on the right are zero unless $\rk(B) \in \{r - 1, r, r + 1\}$, and so splitting the right-hand side according to the rank and character of $B$ gives
\begin{align} \label{symm:pfreceqB} 
\begin{split} 
  \sym_0^{\psi}(n + 1, k + 1, r + 1) =& \sym_0^{\psi \cdot
    \psi(-1)}(n, k, r - 1) \cdot
  \sym_0^{\psi}(r - 1,\psi \cdot \psi(-1); n + 1, r + 1, 0) \\
  &+ \sym_0^{\psi}(n, k, r + 1) \cdot \sym_0^{\psi}(r + 1,\psi; n + 1,
  r + 1, 0) \\
  &+ \sym_0^{\psi}(n,k, r) \cdot \sym_0^{\psi}(r,\psi; n + 1, r + 1, 0)\\
  & + \sym_0^{-\psi}(n,k, r) \cdot \sym_0^{\psi}(r,-\psi; n + 1, r +
  1, 0).
\end{split}
\end{align}

Now we may substitute from Equations \eqref{symm:rec1},
\eqref{symm:rec2}, and \eqref{symm:rec3} and collect the terms with
coefficient $\frac{1}{q}$ to get the desired result.

The initial values of the recursion are the case of rank one, $\sym_0^{\psi}(n+1,k+1,1)$, and the case when $k=0$, $\sym_0^{\psi}(n,0,r)$: 

\begin{description}
\item[Rank one:] Every such matrix $A$ has exactly one nonzero diagonal entry. So up to permuting rows and columns we assume the $(1,1)$ entry of $A$ is $\delta\neq 0$, in which case $A$ has the form $\begin{bmatrix} \delta & v \\ v^T & v^Tv/\delta\end{bmatrix}$.  Thus
\begin{equation}\label{eqn:diagonalizerank1}
A=\begin{bmatrix} \delta & 0 \\ v^T & I_{n}\end{bmatrix}  \cdot \begin{bmatrix} 1/\delta & 0 \\ 0 & {\bf 0_n}\end{bmatrix} \cdot \begin{bmatrix} \delta & v \\ 0 & I_n \end{bmatrix}
\end{equation}
and so $\psi(A)=\psi(\delta)$. From this we deduce that $\sym_0^{\psi}(n+1,k+1,1)=\frac{1}{2}\sym_0(n+1,k+1,1)$. To find $\sym_0(n+1,k+1,1)$ we do the following: since such matrices have rank $1$ and the first $k+1$ diagonal entries equal to zero, they are of the form 
$$
\begin{bmatrix} 
                0 & \hspace{-5pt} \cdots \hspace{-5pt} &  &  & 0  \\[-2pt]
            \vdots & \hspace{-5pt}\ddots \hspace{-5pt} &  & &\vdots   \\
                  &        & 0      & \hspace{-5pt}\cdots \hspace{-5pt} & 0  \\            
            &        & \vdots & B &   \\
0 & \hspace{-5pt}\cdots\hspace{-5pt} & 0 & &      
\end{bmatrix}
$$
where $B$ is a $(n-k)\times (n-k)$ rank one symmetric matrix with no
other restrictions. Hence
$$
\sym_0^{\psi}(n+1,k+1,1) = \frac{1}{2}\sym_0(n-k,0,1)=\frac{1}{2}\sym_0(n-k,1),
$$  
and the latter is given in \eqref{macwill:eq1}.

\item[$\mathbf{k=0}$:] We have $\sym_0^{\psi}(n,0,r)=\sym_0^{\psi}(n,r)$, the number of $n\times n$ symmetric rank $r$ matrices with no other restrictions. Depending on the parity of $r$ this is given in \eqref{macwill:eq2} or \eqref{macwill:eq3}.
\end{description}
This gives the desired result.
\end{proof}

We do not have a solution for this recurrence.  However, we use it to obtain two partial results towards its solution: 

\begin{corollary} \label{symm:partialsolrecrank}
We have
\[
\sym_0^+(n+1,,k+1,2s+1)=\sym_0^-(n+1,k+1,2s+1) = \frac{1}{2} \sym_0(n+1,k+1,2s+1),
\]
and
\[
\sym_0(n+1,k+1,2s) + \sym_0(n+1,k+1,2s+1) = \frac{1}{q^{k+1}}(\sym(n+1,2s) + \sym(n+1,2s+1)).
\]
\end{corollary}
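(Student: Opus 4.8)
The plan is to extract both identities from the recursion of Proposition~\ref{symm:recranknumzeroes} by a short induction on $k$, the number of zeroes prescribed on the diagonal; the point is that the ``correction terms'' appearing in that recursion cancel in exactly the combinations we want.

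\emph{The first identity.} First I would prove, by induction on $k$, that $D(n+1,k,2s+1) := \sym_0^{+}(n+1,k,2s+1) - \sym_0^{-}(n+1,k,2s+1)$ is zero. For $s = 0$ this is immediate from the rank-one initial values listed in Proposition~\ref{symm:recranknumzeroes}, which give $\sym_0^{\pm}(n+1,k,1) = \tfrac12 \sym_0(n+1,k,1)$ outright. For $s \ge 1$ I would invoke the ``$r$ even'' instance of the recursion with $r = 2s$: there the additive correction term $-\tfrac{(-1)^t}{2}(\sym_0^{+}(n,k,2s) - \sym_0^{-}(n,k,2s))(q^{s} - q^{s-1})$ does not involve $\psi$, so subtracting the $\psi = +$ and $\psi = -$ versions of the recursion leaves
\[
D(n+1,k+1,2s+1) = \tfrac1q\, D(n+1,k,2s+1).
\]
The base case $k = 0$ reads $D(n+1,0,2s+1) = \sym^{+}(n+1,2s+1) - \sym^{-}(n+1,2s+1)$, which vanishes by Equation~\eqref{macwill:eq2}. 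Hence $D \equiv 0$, and since $\sym_0^{+} + \sym_0^{-} = \sym_0$ for positive rank, both counts equal $\tfrac12\sym_0(n+1,k+1,2s+1)$.

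\emph{The second identity.} Write $h(n,k,r) = \sym_0(n,k,r)$ and sum the recursion of Proposition~\ref{symm:recranknumzeroes} over $\psi \in \{+,-\}$. Using the ``$r$ odd'' case with $r = 2s-1$ (which produces rank $2s$), the $\psi$-weighted correction sums to $(-1)^{t}(\sym_0^{+}(n,k,2s) - \sym_0^{-}(n,k,2s))(q^{s} - q^{s-1})$; using the ``$r$ even'' case with $r = 2s$ (which produces rank $2s+1$), the correction sums to $-(-1)^{t}(\sym_0^{+}(n,k,2s) - \sym_0^{-}(n,k,2s))(q^{s} - q^{s-1})$ --- and in both cases $t$ is governed by whether $(-1)^{s}$ is a square in $\F_q$, so the two indices $t$ agree. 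Adding these two summed recursions the correction terms cancel, leaving
\[
h(n+1,k+1,2s) + h(n+1,k+1,2s+1) = \tfrac1q\bigl(h(n+1,k,2s) + h(n+1,k,2s+1)\bigr).
\]
Iterating down to $k = 0$ and using $h(n+1,0,r) = \sym(n+1,r)$ then gives the stated formula. The only ranks not covered by this argument are $r \in \{0, 1\}$ (the ``$r$ even'' recursion is stated only for $r > 0$); there one checks directly that $h(n+1,k+1,0) + h(n+1,k+1,1) = 1 + (q^{\,n-k} - 1) = q^{\,n-k}$ matches $q^{-(k+1)}(\sym(n+1,0) + \sym(n+1,1))$, using the rank-one initial value and \eqref{macwill:eq1}.

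The one place that really needs care --- the main obstacle --- is the bookkeeping of signs and of the parity index $t$: one must confirm that in the $r = 2s-1$ and $r = 2s$ instances of the recursion the \emph{same} quantity $(-1)^{t}(\sym_0^{+}(n,k,2s) - \sym_0^{-}(n,k,2s))(q^{s}-q^{s-1})$ shows up, with opposite signs, so that it drops out of the sum. Once that is verified --- it is a matter of matching the two definitions of $t$ in Proposition~\ref{symm:recranknumzeroes} against the $\psi$-dependence of the two correction terms --- each identity collapses to the one-line induction on $k$ above.
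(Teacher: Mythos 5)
Your proposal is correct and follows essentially the same route as the paper's proof: subtract the $\psi=\pm$ instances of the $r=2s$ recursion (plus the $k=0$ value $\sym^+(n+1,2s+1)=\sym^-(n+1,2s+1)$) for the first identity, and sum over $\psi$ in the $r=2s-1$ and $r=2s$ cases so that the correction terms $(-1)^t(\sym_0^+(n,k,2s)-\sym_0^-(n,k,2s))(q^s-q^{s-1})$ cancel, then iterate in $k$ for the second. Your explicit check of the $s=0$ (ranks $0$ and $1$) edge case is a small point of extra care that the paper passes over silently.
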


\begin{proof}
From the case $r=2s$ in Proposition \ref{symm:recranknumzeroes} we have
\[
\sym_0^+(n+1,k+1,2s+1)-\sym_0^-(n+1,k+1,2s+1) = \frac{1}{q}(\sym_0^+(n+1,k,2s+1)-\sym_0^-(n+1,k,2s+1)).
\]
Since $\sym_0^+(n + 1, 0, 2s + 1) = \sym_0^-(n + 1, 0, 2s + 1)$, we have our result in this case.

Proposition \ref{symm:recranknumzeroes} also provides a recurrence for $\sym_0(n+1,k+1,r+1)=\sym_0^+(n+1,k+1,r+1) + \sym_0^-(n+1,k+1,r+1)$.  Setting $t=0$ if $(-1)^s$ is a square in $\F_q$ and $t=1$ otherwise, we have 
\begin{multline*}
\sym_0(n+1,k+1,2s+1) =
\frac{1}{q}\sym_0(n+1,k,2s+1) -\\
 - (-1)^t(\sym^+_0(n,k,2s)-\sym^-_0(n,k,2s))(q^{s}-q^{s-1})
\end{multline*}
and
\begin{multline*}
\sym_0(n+1,k+1,2s) =
\frac{1}{q}\sym_0(n+1,k,2s) + \\
 +(-1)^t(\sym^+_0(n,k,2s)-\sym^-_0(n,k,2s))(q^{s}-q^{s-1}).
\end{multline*}
Thus 
\[
\sym_0(n+1,k+1,2s) + \sym_0(n+1,k+1,2s+1) = \frac{1}{q}(\sym_0(n+1,k,2s) + \sym_0(n+1,k,2s+1)). 
\]
Iterating this, we obtain the desired formula.
\end{proof}

We also use Proposition \ref{symm:recranknumzeroes} to obtain an explicit formula in the case of invertible symmetric matrices (i.e., when $r = n$). We do this in the next section.

\subsection{Invertible symmetric matrices with a fixed number of zeroes on the diagonal}\label{sec:4.2}

Let $\symz(n,k)=\sym_0(n,k,n)$ be the number of invertible $n \times n$ symmetric matrices with first $k$ diagonal elements equal to $0$, with no other restrictions. Let $\symz^{\psi}(n,k)$ count only those matrices that have character $\psi$ (in this case the quadratic character of the determinant). We use the recursion in Proposition \ref{symm:recranknumzeroes} to give a recurrence for this full rank case.

\begin{proposition}\label{prop:symz recursions}
  If $n$ is odd define $t=0$ if $(-1)^{(n+1)/2}$ is a square in $\F_q$
  and $t=1$ otherwise. Then
\[
\symz^{\psi}(n+1,k+1) = \frac{1}{q} \symz^{\psi}(n+1,k) + \frac{(-1)^t\cdot \psi}{2} \symz(n,k)(q^{(n+1)/2} - q^{(n-1)/2}).
\]
If $n$ is even, define $t=0$ if $(-1)^{n/2}$ is a square in $\F_q$
and $t=1$ otherwise.  Then
\[
\symz^{\psi}(n+1,k+1) = \frac{1}{q} \symz^{\psi}(n+1,k) -
\frac{(-1)^{t}}{2} \left(\symz^{+}(n,k) -
  \symz^{-}(n,k)\right)(q^{n/2} - q^{(n-2)/2}).
\]

These recursions have initial values
\begin{equation}\label{symm:oddinitialvalues}
\symz^{+}(2m+1,0)= \frac{1}{2} \symz(2m+1,0)
\end{equation}
and 
\begin{equation} \label{symm:eveninitialvalues}
\symz^{+}(2m,0)= \frac{1}{2} \frac{q^m + (\psi(-1))^m}{q^m} \symz(2m,0).
\end{equation}
\end{proposition}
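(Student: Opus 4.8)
The plan is to derive Proposition~\ref{prop:symz recursions} directly from Proposition~\ref{symm:recranknumzeroes} by specializing to the full-rank case $r = n$, and to read off the initial values the same way.

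First I would set up the dictionary between the two notations: an $m \times m$ symmetric matrix has full rank if and only if it is invertible, so $\sym_0^{\psi}(m, k, m) = \symz^{\psi}(m, k)$ and likewise $\sym_0(m, k, m) = \symz(m, k)$. Substituting $r = n$ into the recursion of Proposition~\ref{symm:recranknumzeroes} therefore sends $\sym_0^{\psi}(n+1, k+1, r+1) \mapsto \symz^{\psi}(n+1, k+1)$, $\sym_0^{\psi}(n+1, k, r+1) \mapsto \symz^{\psi}(n+1, k)$, $\sym_0(n, k, r) \mapsto \symz(n, k)$, and $\sym_0^{\pm}(n, k, r) \mapsto \symz^{\pm}(n, k)$. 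The only term needing a separate remark is $\sym_0^{\psi}(n, k, r+1) = \sym_0^{\psi}(n, k, n+1)$, which equals $0$ since an $n \times n$ matrix cannot have rank $n+1$; it is exactly this term that appears explicitly in the odd-$r$ branch of Proposition~\ref{symm:recranknumzeroes} and is already absent from the even-$r$ branch (in the derivation via \eqref{symm:pfreceqB} it corresponds to a contribution from a rank-$(n+1)$ submatrix $B$, which does not exist). After this substitution the ``$r$ odd'' branch becomes precisely the claimed recursion for $n$ odd and the ``$r$ even, $r > 0$'' branch becomes the claimed recursion for $n$ even, with the parity quantity $t$ --- defined via whether $(-1)^{(r+1)/2}$, resp.\ $(-1)^{r/2}$, is a square --- carrying over verbatim to the conditions on $(-1)^{(n+1)/2}$, resp.\ $(-1)^{n/2}$.

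For the initial values I would specialize the initial data listed in Proposition~\ref{symm:recranknumzeroes} to $r = n$: putting $n = 2m+1$ and $s = m$ in $\sym_0^+(n, 0, 2s+1) = \tfrac12 \sym(n, 2s+1)$ yields $\symz^+(2m+1, 0) = \tfrac12 \symz(2m+1, 0)$, which is \eqref{symm:oddinitialvalues}; putting $n = 2m$ and $s = m$ in $\sym_0^+(n, 0, 2s) = \tfrac12 \cdot \frac{q^s + (\psi(-1))^s}{q^s} \sym(n, 2s)$ yields $\symz^+(2m, 0) = \tfrac12 \cdot \frac{q^m + (\psi(-1))^m}{q^m} \symz(2m, 0)$, which is \eqref{symm:eveninitialvalues}. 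Equivalently, these are MacWilliams' identities \eqref{macwill:eq2} and \eqref{macwill:eq3} evaluated at $r = n$, using $\sym^+(n, n) = \symz^+(n, 0)$.

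There is no real obstacle here beyond bookkeeping: the points that require care are the vanishing of the $\sym_0^{\psi}(n, k, n+1)$ term and checking that the two parity conventions for $t$ match up. If a self-contained argument were preferred, one could instead re-run the proof of Proposition~\ref{symm:recranknumzeroes} in the full-rank case --- analyzing the matrices $A = \left[\begin{smallmatrix} a & w \\ w^T & D \end{smallmatrix}\right]$ with $D = \diag(1^{n-1}, \delta)$ invertible --- but only Cases~1 and~3 of that proof would survive (Case~2, $\rk A = r$, cannot arise in the parametrization \eqref{symm:pfreceqB} since it would force the submatrix $B$ to have rank $n+1$), so this would merely reproduce a strict subset of the earlier work.
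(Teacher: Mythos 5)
Your proposal is correct and matches the paper's proof, which is exactly the one-line specialization ``apply Proposition~\ref{symm:recranknumzeroes} with $r=n$''; your extra bookkeeping (the vanishing of $\sym_0^{\psi}(n,k,n+1)$ and the identification of the initial values with MacWilliams' formulas at $r=n$) just spells out what that specialization entails.
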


\begin{proof} 
Apply Proposition \ref{symm:recranknumzeroes} with $r = n$.
\end{proof}

Notice that in the first recursion (when $n$ is odd), the sign of the last summand depends on $\psi$. Exploiting this observation, we obtain simple recurrences for $\symz(m,k)=\symz^+(m,k) + \symz^-(m,k)$ (already given in Lemma \ref{lemma:numberzeroes}) and a more complicated one for $m$ odd.  We also give recurrences for $\symz^+(m,k)-\symz^-(m,k)$.

\begin{corollary} \label{symm:enumsymminv}
We have
\[
\symz(n+1,k+1) = \begin{cases}
  q^{-1} \symz(n+1,k) & \text{ if } n \text{ odd,}\\
  q^{-1} \symz(n+1,k) -
  (-1)^t(\symz^+(n,k)-\symz^-(n,k))(q^{n/2} - q^{(n-2)/2}) & \text{
    otherwise}
\end{cases}
\]
where $t=0$ if $(-1)^{n/2}$ is a square in $\F_q$ and
$t=1$ otherwise, and
\[
\symz^+(n+1,k+1)-\symz^-(n+1,k+1) = \begin{cases}
  \displaystyle  \frac{1}{q}(\symz^+(n+1,k)-\symz^-(n+1,k))  &
  \text{if } n \text{ odd,}\\ 
  \quad  + (-1)^{t}\cdot\symz(n,k)(q^{(n-1)/2}-q^{(n-3)/{2}})  & \\
  0 &\text{otherwise}
\end{cases}
\]
where $t=0$ if $(-1)^{(n+1)/2}$ is a square in
$\F_q$, and $t=1$ otherwise. We have initial values
$\symz(m,0)=\sym(2m)$ and
\[
\symz^+(m,0)-\symz^-(m,0) = \begin{cases}
\frac{\psi(-1)^m}{q^m} \sym(m) &\text{if } m \text{ is even},\\
0 & \text{otherwise.}
\end{cases}
\]
\end{corollary}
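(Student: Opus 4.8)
The plan is to obtain the entire corollary from the two-character recursion of Proposition~\ref{prop:symz recursions} by taking the sum and the difference of its $\psi=+$ and $\psi=-$ instances, splitting on the parity of $n$. The structural point is that when $n$ is odd the correction term $\tfrac{(-1)^{t}\psi}{2}\,\symz(n,k)\,(q^{(n+1)/2}-q^{(n-1)/2})$ appearing in Proposition~\ref{prop:symz recursions} changes sign with $\psi$, whereas when $n$ is even the correction term $-\tfrac{(-1)^{t}}{2}\,(\symz^{+}(n,k)-\symz^{-}(n,k))\,(q^{n/2}-q^{(n-2)/2})$ does not depend on $\psi$. Consequently the correction survives in exactly one of the two linear combinations in each parity.

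When $n$ is odd, adding the $\psi=+$ and $\psi=-$ instances cancels the correction and gives $\symz(n+1,k+1)=q^{-1}\symz(n+1,k)$; subtracting them doubles the correction and gives the ``$n$ odd'' branch of the recursion for $\symz^{+}(n+1,k+1)-\symz^{-}(n+1,k+1)$, with the exponent $t$ and its defining condition (``$(-1)^{(n+1)/2}$ is a square'') inherited directly from Proposition~\ref{prop:symz recursions}. When $n$ is even the roles are exchanged: adding the two instances retains the ($\psi$-independent) correction and yields $\symz(n+1,k+1)=q^{-1}\symz(n+1,k)-(-1)^{t}(\symz^{+}(n,k)-\symz^{-}(n,k))(q^{n/2}-q^{(n-2)/2})$, while subtracting them cancels the correction and yields $\symz^{+}(n+1,k+1)-\symz^{-}(n+1,k+1)=q^{-1}(\symz^{+}(n+1,k)-\symz^{-}(n+1,k))$. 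As a consistency check, iterating the ``$n$ odd'' formula $\symz(n+1,k+1)=q^{-1}\symz(n+1,k)$ recovers $q^{k}\symz(n+1,k)=\symz(n+1,0)$, i.e., Lemma~\ref{lemma:numberzeroes} for even size.

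To reach the closed value $0$ in the ``otherwise'' branch of the second recursion, I would iterate $\symz^{+}(n+1,k+1)-\symz^{-}(n+1,k+1)=q^{-1}(\symz^{+}(n+1,k)-\symz^{-}(n+1,k))$ down to $k=0$, reducing the claim to the vanishing of $\symz^{+}(n+1,0)-\symz^{-}(n+1,0)$ when $n+1$ is odd. Both that vanishing and the stated value of $\symz^{+}(m,0)-\symz^{-}(m,0)$ for even $m$ follow from the initial values $\symz^{+}(2m+1,0)=\tfrac12\symz(2m+1,0)$ and $\symz^{+}(2m,0)=\tfrac12\,\tfrac{q^{m}+(\psi(-1))^{m}}{q^{m}}\,\symz(2m,0)$ recorded in Proposition~\ref{prop:symz recursions}, upon writing $\symz^{-}(m,0)=\symz(m,0)-\symz^{+}(m,0)$; and the value of $\symz(m,0)$ is immediate from the definition of $\symz$ together with MacWilliams's count~\eqref{eqn:invsymmetric} of invertible symmetric matrices.

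I do not expect a genuine obstacle here: the content is already contained in Proposition~\ref{prop:symz recursions}, and the only delicate part is bookkeeping --- keeping the sign $(-1)^{t}$ and the condition defining $t$ attached to the correct parity, and matching the precise powers of $q$ in the correction terms --- which is entirely mechanical once the sum/difference decomposition is set up.
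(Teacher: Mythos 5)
Your derivation is exactly the paper's: the corollary is obtained by adding and subtracting the $\psi=\pm$ instances of Proposition~\ref{prop:symz recursions}, using that the correction term is odd in $\psi$ when $n$ is odd and independent of $\psi$ when $n$ is even, then iterating to $k=0$ and invoking the initial values \eqref{symm:oddinitialvalues}, \eqref{symm:eveninitialvalues} and MacWilliams's count. One caveat: carrying out the bookkeeping you call mechanical actually yields the correction $(-1)^t\,\symz(n,k)\,(q^{(n+1)/2}-q^{(n-1)/2})$ in the odd-$n$ difference recursion and initial values $\symz(m,0)=\sym(m)$ and $\symz^+(m,0)-\symz^-(m,0)=\psi(-1)^{m/2}q^{-m/2}\sym(m)$ for $m$ even, so the printed $(q^{(n-1)/2}-q^{(n-3)/2})$, ``$\sym(2m)$'' and ``$\psi(-1)^m/q^m$'' appear to be typos (the case $n=1$, $k=0$, where $\symz^{\psi(-1)}(2,1)=q(q-1)$, confirms the proposition's exponents are the right ones), and your assertion that the precise powers of $q$ match the statement should be read as deriving the corrected formulas rather than the printed ones.
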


We know from Lemma \ref{lemma:numberzeroes} that for $n$ odd, we have $\symz(n+1,k+1)=\frac{1}{q^{k+1}}\sym(n+1)$.  We now provide the complementary result for $n$ even. 

\begin{theorem}\label{thm:clover2}
Let $\symz(n,k)$ be the number of invertible $n \times n$ symmetric matrices with the first $k$ diagonal elements equal to $0$ and let $\sym(n)$ be the number of invertible $n\times n$ symmetric matrices with no other restrictions.  We have
\[
\symz(2m,k+1)=\frac{1}{q^{k+1}}\sym(2m),
\]
and
\[
\symz(2m+1,k+1)=
\frac{q^{m^2+m}}{q^{k+1}} \sum_{j=0}^{\lfloor k/2 \rfloor+1} \! (-1)^j(q-1)^{m + j}[2m-2j+1]_q !!\left( \binom{k+1}{2j-1} + (q-1) \binom{k+1}{2j}\right).
\]
In terms of the character,
\begin{multline*}
  \symz^+(2m,k+1) = \frac{\sym(2m)}{2 q^{k+1}} + \\
    + \frac{(-1)^tq^{m^2}}{2
    q^{k+1}}\sum_{j=0}^{\lceil k/2\rceil} (-1)^j (q-1)^{m + j} [2m - 2j - 1]_q!! \left(\binom{k+1}{2j} + (q - 1)\binom{k + 1}{2j + 1}\right),
\end{multline*}

where $t=1$ if $(-1)^m$ is a square in $\F_q$ and $t=0$ otherwise;
and
\[
\symz^+(2m+1,k+1) = \frac{1}{2}\symz(2m+1,k+1).
\]
\end{theorem}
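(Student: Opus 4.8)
The plan is to solve the full‑rank specializations of the diagonal‑zero recursions, i.e.\ the recursions of Proposition~\ref{prop:symz recursions} (equivalently Corollary~\ref{symm:enumsymminv}) for $\symz(n,k)$ and $\symz^{\pm}(n,k)$. Two of the four claimed identities are immediate. The identity $\symz(2m,k+1)=q^{-(k+1)}\sym(2m)$ is exactly Lemma~\ref{lemma:numberzeroes} applied with the even size $n=2m$, together with $\symz(2m,0)=\sym(2m)$. The identity $\symz^{+}(2m+1,k+1)=\tfrac12\symz(2m+1,k+1)$ holds because the matrix size $2m+1$ is odd: the ``$0$'' branch of the recursion for $\symz^{+}-\symz^{-}$ applies, so $\symz^{+}(2m+1,k+1)=\symz^{-}(2m+1,k+1)$, and the matching initial value $\symz^{+}(2m+1,0)-\symz^{-}(2m+1,0)$ is $0$ as well; an induction on $k$ gives $\symz^{+}(2m+1,k)=\symz^{-}(2m+1,k)$ for all $k$. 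Thus the substance is the formulas for $\symz(2m+1,k+1)$ and $\symz^{+}(2m,k+1)$; and since $\symz^{+}(2m,k)=\tfrac12\bigl(\symz(2m,k)+E_m(k)\bigr)$ with $E_m(k):=\symz^{+}(2m,k)-\symz^{-}(2m,k)$ and $\symz(2m,k)$ already known, the latter is equivalent to a closed form for $E_m(k)$.

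I would obtain these by a single induction on $m$. The quantities $\symz(2m+1,\cdot)$ and $E_m(\cdot)$ are coupled through first‑order (in $k$) recursions read off from Proposition~\ref{prop:symz recursions}:
\[
\symz(2m+1,k+1)=\frac1q\,\symz(2m+1,k)-(-1)^{t}E_m(k)\,(q^{m}-q^{m-1}),
\]
\[
E_m(k+1)=\frac1q\,E_m(k)+(-1)^{t}\,\symz(2m-1,k)\,(q^{m}-q^{m-1}),
\]
with initial data $\symz(2m+1,0)=\sym(2m+1)$, $E_m(0)=\psi(-1)^m q^{-m}\sym(2m)$, and sign factor governed by whether $(-1)^m$ is a square in $\F_q$ (one checks the relevant $(-1)^t$ equals $\pm\psi(-1)^m$ according to the conventions in the statement, using that $(-1)^m$ is a square exactly when $m$ is even or $-1$ is a square). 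Fixing $m$ and taking the formula for $\symz(2(m-1)+1,\cdot)=\symz(2m-1,\cdot)$ as inductive hypothesis, I would first establish the closed form for $E_m(k)$ — either by iterating its $k$‑recursion, whose solution is $a_k=q^{-k}a_0+\sum_{i<k}q^{\,i-k+1}c_i$, and collapsing the resulting sums, or by directly verifying the proposed formula against the recursion by induction on $k$ — and then feed that into the $\symz(2m+1,\cdot)$ recursion. Finally $\symz^{+}(2m,k+1)=\tfrac12\bigl(q^{-(k+1)}\sym(2m)+E_m(k+1)\bigr)$ yields the third formula. Before starting it helps to rewrite MacWilliams' product \eqref{eqn:invsymmetric} in $q$‑double‑factorial form, e.g.\ $\sym(2m)=q^{m(m+1)}(q-1)^{m}[2m-1]_q!!$ and $\sym(2m+1)=q^{m(m+1)}(q-1)^{m+1}[2m+1]_q!!$, so that the base cases and the leading terms of the sums line up.

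The main obstacle is the bookkeeping needed to see that the proposed double sums satisfy these recursions. After clearing the common prefactor $q^{m^2+m}/q^{k+1}$ (respectively $q^{m^2}/q^{k+1}$), the verification rests on the $q$‑Pascal relation
\[
\binom{k+2}{2j-1}+(q-1)\binom{k+2}{2j}=\left(\binom{k+1}{2j-1}+(q-1)\binom{k+1}{2j}\right)+\left(\binom{k+1}{2j-2}+(q-1)\binom{k+1}{2j-1}\right),
\]
so that consecutive summands recombine correctly, together with the factorization $[2m-2j+1]_q!!=[2m-2j+1]_q\,[2m-2j-1]_q!!$ to align the double‑factorial factors of matrices whose ranks differ by $2$; on the iterate‑and‑collapse route the same facts enter through the hockey‑stick identity $\sum_{i=0}^{k-1}\binom{i}{l}=\binom{k}{l+1}$, with a little extra care at the boundary term $j=0$ (there $\binom{i}{-1}=0$ makes that summand constant in $i$, which must be reconciled with the leading term of the target sum). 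One must also watch the summation ranges ($\lfloor k/2\rfloor+1$ versus $\lceil k/2\rceil$), the $t$‑conventions, the natural range of validity in $k$, and the degenerate and edge cases ($m$ small, $k=0$, and the ``$k=-1$'' specialization, which must return $\sym(2m+1)$), all checked by hand; everything else is a routine induction.
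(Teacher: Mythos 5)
Your proposal is correct and is essentially the paper's own proof: the paper simply states that one iterates the recursions of Corollary~\ref{symm:enumsymminv} (equivalently Proposition~\ref{prop:symz recursions}) to obtain the formulas, or in hindsight verifies directly that the closed forms satisfy those recursions, which is exactly your plan with the bookkeeping (coupled $k$-recursions for $\symz(2m+1,\cdot)$ and $\symz^{+}-\symz^{-}$, initial values, $q$-Pascal collapsing) spelled out. Your recursion for $E_m$ correctly uses the factor $q^{m}-q^{m-1}$ from Proposition~\ref{prop:symz recursions} rather than the (typographically off by a factor of $q$) factor printed in Corollary~\ref{symm:enumsymminv}, so the details check out.
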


\begin{proof}
As in MacWilliams \cite[p 156]{macwilliams}, in the first attempt we iterate the recursions of Corollary \ref{symm:enumsymminv} to obtain the formulas above, but in hindsight we can directly verify that the formulas satisfy the recursions. 
\end{proof}

\section{Polynomiality, $q$-analogues, and some open questions} \label{sec:polynom} 
So far, we have fixed sets of the form $S = \{(i,i) \mid 1\leq i \leq k\}$, counted matrices over $\F_q$ with support avoiding $S$ by rank, and done analogous counts for symmetric and skew-symmetric matrices. In this section, we briefly examine what happens when we enumerate matrices of given rank whose support avoids an arbitrary fixed set of entries. 

\subsection{$q$-analogues and the proof of Proposition \ref{RL:qprop}}

Fix $m,n \geq 1$, $r
\geq 0$, and $S \subset \{(i,j) \mid 1 \le i \le m, 1 \le j \le
n\}$. Let $T_q=T_q(m\times n,S,r)$ be the set of $m \times n$ matrices $A$ over $\F_q$ with rank $r$ and support contained in the complement of
$S$.  We consider the problem of computing $\# T_q$, the number of such matrices.

A first observation is that, holding $m, n, r, S$ fixed and letting $q$ vary, the function $\#T_q$ need not be polynomial in $q$.  We have already seen this phenomenon in the case of symmetric matrices; for instance, setting $m = n = r$ to be an odd positive integer and $S=\{(i,i) \mid 1\leq i \leq n\}$ we have from Equations \eqref{mac:qeveneq1} and \eqref{mac:qeveneq2} and Theorem \ref{thm:clover2} that $\#(T_q(n\times n,S,n) \cap \Sym(n)) = \sym_0(n)$ is equal to zero when $q$ is even but is nonzero when $q$ is odd.  This lack of polynomiality also occurs in the not-necessarily symmetric case. Stembridge \cite[Section 7]{stem} showed that for $n=m=7$, if $S'$ is the complement of the incidence matrix of the Fano plane, then the number of invertible $7\times 7$ matrices in $\F_q$ whose support avoids $S'$ is given by two different polynomials depending on whether $q$ is even or odd.  (This is the smallest such example in the sense that $\#T_q(n\times n,S,n)$ is a polynomial if $n < 7$ for any set $S$, and if $n=7$ and $\#S > 28$.) See Figure \ref{fig:Fano} for a construction of $S'$. 

\begin{figure}\begin{center}
\raisebox{.75in}{$
\left[\begin{array}{ccccccc}
a_{{1\textcolor{red}{1}}}&a_{{12}}&0&0&0&0&a_{{17}}
\\ a_{{2\textcolor{red}{1}}}&0&a_{{23}}&0&0&a_{{26}}&0
\\ a_{{3\textcolor{red}{1}}}&0&0&a_{{34}}&a_{{35}}&0&0
\\ 0&a_{{42}}&a_{{43}}&0&a_{{45}}&0&0
\\ 0&a_{{52}}&0&a_{{54}}&0&a_{{56}}&0
\\ 0&0&a_{{63}}&a_{{64}}&0&0&a_{{67}}
\\ 0&0&0&0&a_{{75}}&a_{{76}}&a_{{77}}\end{array}\right]
$}
\hspace{1in}
\begin{tikzpicture}[scale=0.75,thick]
                \node [style=bv] (1) at (0, 3.464) [label=above:$1$] {};
                \node [style=bv] (3) at (1, 1.732) [label=above:$4$] {}; 
                \node [style=bv] (4) at (0, 1.155) [label=left:$7$] {};
                \node [style=bv] (5) at (-2, 0) [label=below:$3$] {};
                \node [style=bv] (6) at (0, 0) [label=below:$6$] {};
                \node [style=bv] (7) at (2, 0) [label=below:$5$]  {};
                \draw (0,1.155) circle (1.155cm);
                \draw[red] [-] (1) -- (5);
                \node [style=bv] (2) at (-1, 1.732) [label=above:$2$] {};
                \draw [-] (1) -- (7); 
                \draw [-] (1) -- (6);
                \draw [-] (7) -- (5);
                \draw [-] (5) -- (3);
                \draw [-] (7) -- (2);
\end{tikzpicture}
\end{center}
\caption{A representative matrix from $T_q(7\times 7,S',7)$ where $S'$ is the complement of the incidence matrix of the Fano plane, shown at right. Stembridge \cite{stem} showed this to be the smallest example where $\#T_q$ is not a polynomial in $q$.}
\label{fig:Fano}
\end{figure}

A second observation is that we expect $\# T_q$ to be a $q$-analogue of a closely related problem for permutations. Specifically, let $T_1=T_1(m\times  n,S,r)$ be the set of $0$-$1$ matrices with exactly $r$ $1$'s, no two of which lie in the same row or column, and with support contained in the complement of $S$. The following proposition makes this precise.

\begin{proposition} \label{RL:qprop}
Fix $m,n \geq 1$, $r
\geq 0$, and $S \subset \{(i,j) \mid 1 \le i \le m, 1 \le j \le
n\}$. Let $T_q=T_q(m\times n,S,r)$ be the set of $m \times n$ matrices $A$ over $\F_q$ with rank $r$ and support contained in the complement of
$S$, and $T_1$ be the set of $0$-$1$ matrices with exactly $r$ $1$'s, no two of which lie in the same row or column, and with support
contained in the complement of $S$. Then we have 
\[
\# T_q \equiv \# T_1 \cdot (q-1)^r \pmod {(q-1)^{r+1}}.
\]
\end{proposition}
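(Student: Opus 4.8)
The plan is to exploit the action of the torus $G=(\F_q^\times)^m\times(\F_q^\times)^n$ on $T_q$ by rescaling rows and columns, namely $(s,t)\cdot A=(s_iA_{ij}t_j)_{i,j}$. Since $(s,t)\cdot A=\diag(s)\,A\,\diag(t)$ this preserves rank, and since the $s_i,t_j$ are units it preserves the support; hence it is an action on $T_q$. The key point will be that every $G$-orbit has size divisible by $(q-1)^r$ and that exactly $\#T_1$ of them have size equal to $(q-1)^r$.

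First I would compute orbit sizes. To each $A\in T_q$ associate the bipartite graph $G_A$ on vertex set $\{1,\dots,m\}\sqcup\{1,\dots,n\}$ having an edge $\{i,j\}$ for every $(i,j)$ in the support of $A$. The stabilizer condition ``$s_it_j=1$ for every edge $\{i,j\}$'' says precisely that the $\F_q^\times$-valued function sending a row $i$ to $s_i$ and a column $j$ to $t_j^{-1}$ is constant on each connected component of $G_A$; therefore the stabilizer of $A$ has order $(q-1)^{c(A)}$, where $c(A)$ denotes the number of connected components of $G_A$ (isolated vertices included). So the orbit of $A$ has size $(q-1)^{m+n-c(A)}$, and $m+n-c(A)$ is exactly the rank of the cycle matroid of $G_A$.

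Next comes the combinatorial heart of the argument, which I expect to be the main obstacle: the inequality $m+n-c(A)\ge r$, with equality if and only if the support of $A$ is a placement of $r$ non-attacking rooks. For the inequality, since $A$ has rank $r$ it has an $r\times r$ submatrix with nonzero determinant, and a nonvanishing term in the expansion of that determinant picks out $r$ entries of the support with distinct rows and distinct columns, i.e.\ a matching of size $r$ in $G_A$; as any matching is a forest, its size is at most the matroid rank $m+n-c(A)$. For the equality case, such a size-$r$ matching is then a basis of the matroid, i.e.\ a spanning forest, so restricting to any connected component of $G_A$ gives a spanning tree of that component consisting of at most one edge; hence every component has at most two vertices, the support of $A$ is a set of $r$ non-attacking rooks, and (being a subset of the support of some matrix in $T_q$) it avoids $S$. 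Conversely, a matrix whose support is such a placement has rank exactly $r$ and $c(A)=m+n-r$, giving equality.

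Finally I would assemble the count. Because every $G$-orbit $O$ in $T_q$ has size $(q-1)^{m+n-c(O)}$ with $m+n-c(O)\ge r$, we get
\[
\#T_q=\sum_{O}|O|\equiv(q-1)^r\cdot\#\{O:\ |O|=(q-1)^r\}\pmod{(q-1)^{r+1}}.
\]
An orbit of size $(q-1)^r$ consists of all matrices with support equal to one fixed placement $\pi$ of $r$ non-attacking rooks avoiding $S$: the previous paragraph identifies the support as such a $\pi$, the support is constant along an orbit, and the set of all $(q-1)^r$ matrices with support exactly $\pi$ is a single $G$-orbit (one may rescale the $r$ nonzero entries to arbitrary nonzero values using row scalings alone, since no two share a row). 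Thus these orbits are in bijection with the elements of $T_1$, so there are $\#T_1$ of them, and $\#T_q\equiv\#T_1\cdot(q-1)^r\pmod{(q-1)^{r+1}}$, as desired. The only nontrivial step is the matroid/matching argument, and even that amounts to the observation that attaining equality in $m+n-c(A)\ge r$ forces the support to be a rook placement.
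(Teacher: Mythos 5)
Your proposal is correct and follows essentially the same route as the paper: the torus action of $(\F_q^\times)^m\times(\F_q^\times)^n$ by row and column scalings, the bipartite support graph, the orbit size $(q-1)^{m+n-C(G)}$, and the identification of the minimal orbits with rook placements in $T_1$. The only cosmetic difference is how the inequality $m+n-C(G)\ge r$ is justified (you use a size-$r$ matching from a nonzero $r\times r$ minor and a matroid-rank argument, while the paper counts non-isolated vertices), and your write-up is somewhat more detailed on the bijection between minimal orbits and $T_1$.
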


In particular, for any infinite set of values of $q$ for which $\#T_q$ is a polynomial in $q$ we have that $(q-1)^r$ divides $\#T_q$ as a polynomial and that $\#T_q/(q-1)^r\mid_{q=1} = \#T_1$. 

\begin{proof}
  For each $\ell$, identify $(\F_q^\times)^\ell$ with the group of invertible
  diagonal $\ell \times \ell$ matrices. Consider the action of $(\F_q^\times)^m \times (\F_q^\times)^n$ on $T_q$ given by
  $(X,Y)\cdot A = XAY^{-1}$. For any $A \in T_q$, let $G$ be the bipartite
  graph with vertices $v_1, \dots, v_m, w_1, \dots, w_n$ and an edge
  $v_iw_j$ if $A_{ij} \neq 0$. Then $(x_1, \dots, x_m, y_1, \dots,
  y_n) \in (\F_q^\times)^m \times (\F_q^\times)^n$
  stabilizes $A$ if and only if $x_i=y_j$ for all edges $v_iw_j$ of
  $G$. Thus, the size of the stabilizer of $A$ is $(q-1)^{C(G)}$,
  where $C(G)$ is the number of connected components of $G$, and the
  size of the orbit of $A$ is therefore $(q-1)^{m+n-C(G)}$.

  Since $A$ has rank $r$, at least $r$ of the $v_i$ and $r$ of the
  $w_i$ have positive degree. It follows that $C(G) \leq m+n-r$ 
  with equality if and only if $G$ consists of $r$
  disjoint edges, that is, when $G$ is the graph associated to a matrix in
  $T_1$. It follows that the size of each orbit is $(q-1)^a$ for some
  $a \geq r$, and the number of orbits of size $(q-1)^r$ is $\#T_1$. 
\end{proof}

\begin{remark} 
  The technique in the proof of Proposition \ref{RL:qprop} is widely applicable to similar problems.  We give one brief example in the case of symmetric matrices.  Suppose that $q$ is odd.  The group $(\F_q^\times)^n$ of invertible diagonal matrices acts on the set of symmetric 
  matrices by the rule $X \cdot A = XAX$. For a symmetric matrix $A$, we 
  consider the graph $G$ on $n$ vertices $v_1,\ldots,v_n$ with edge $v_iv_j$ if 
  and only if $A_{ij} \neq 0$.  The order of the stabilizer of $A$ is the number 
  of tuples $(x_1, \ldots, x_n) \in (\F_q^\times)^n$ such that $x_ix_j=1$ whenever 
  $v_iv_j$ is an edge in $G$.  For each connected component of $G$ we have $q - 1$ 
  solutions if the component is bipartite or $2$ solutions if the component 
  contains odd cycles (including possibly loops).  Thus, if $C_{\text{bip}}(G)$ is the number of bipartite components of $G$ then the size of the stabilizer of $A$ is
  $(q-1)^{C_{\text{bip}}(G)} \cdot 2^{C(G) - C_{\text{bip}}(G)}$ and so the size of the orbit of $A$ is $(q-1)^{n - C(G)} \cdot ((q-1)/2)^{C(G) - C_{\text{bip}}(G)}$.  

  Now restrict consideration to matrices of rank $2s$ with zero diagonal.   In this case we have $C(G) \leq n - s$, with equality exactly when $G$ consists of $s$ disjoint edges.  The contribution of the orbits of such matrices is $(q - 1)^s$ times the number of symmetric $0$-$1$ matrices of rank $2s$ with no two ones in the same row or column, so (looking modulo $(q - 1)^{s + 1}$) we have that symmetric matrices with zero diagonal are a $q$-analogue of ``partial fixed point-free involutions.''
\end{remark}

\subsection{Polynomiality and a conjecture of Kontsevich}

As mentioned in Section \ref{sec:intro}, the question of the polynomiality of $\#T_q$ is related to the Kontsevich conjecture. We briefly provide some background on this conjecture and on its relation to the polynomiality of $\#T_q$. 

Let $G$ be an undirected connected graph with edge set $E$, and
form the polynomial ring $\ZZ[x_e \mid e \in E]$. We consider the
polynomial
\[
P_G(x) = \sum_T \prod_{e \notin T} x_e,
\]
where the sum is over all spanning trees $T$ of $G$. Motivated
by computer calculations and some relations to algebraic geometry,
Kontsevich speculated that the number of solutions to $P_G(x) \ne 0$
over $\F_q$ is a polynomial function in the parameter $q$. Stanley
\cite{spt} reformulated this as follows. First, consider the
renormalization
\[
Q_G(x) = P_G(1/x) \prod_{e \in E} x_e = \sum_T \prod_{e \in T} x_e.
\]
Let $g_G(q) = \#\{ x \in \F_q^E \mid Q_G(x) \ne 0\}$. Using
inclusion-exclusion, one finds that the number of solutions to $P_G(x)
\ne 0$ is a polynomial (in $q$) if and only if $g_G(q)$ is a polynomial. Let
$v_1, \dots, v_n$ be the vertices of $G$ and suppose that $v_n$ is
adjacent to all other vertices. By the matrix-tree theorem, one may
conclude that $g_G(q)$ is the number of symmetric matrices in
$\GL(n-1,q)$ such that the $(i,j)$-th entry is $0$ whenever $i \ne j$ and
$v_i$ and $v_j$ are not connected. Thus, $g_G(q)=\#( T_q(n\times n,S_G,n)\cap \Sym(n))$ where $S_G=\{ (i,j) \mid i\neq j \textrm{ and } v_iv_j\not\in E\}$. 

Using Stanley's reformulation, Belkale and Brosnan showed in \cite{bb}
that Kontsevich's speculation is false by showing that the functions
$g_G(q)$ are as complicated (in a very precise sense) as the functions
counting the number of solutions over $\F_q$ of any variety defined
over $\ZZ$. In addition, Stembridge showed in \cite{stem} that $g_G(q)$ is a polynomial for graphs $G$ with at most $12$ edges; in \cite{schn}, Schnetz extended this result to $13$ edges and found six non-isomorphic graphs with $14$ edges such that $g_G(q)$ is not a polynomial in $q$. Given these results, it becomes an interesting problem to
determine when $g_G(q)$ is a polynomial in $q$.  Taken together with Proposition~\ref{RL:qprop}, they also 
suggest the following question:

\begin{question}
  For which families of sets $S$ is $\# T_q(m\times n,S,r)$ a
  polynomial in $q$?
\end{question}

Note that  $\#T_q(m\times n,S,r)$ is invariant under permutations of rows and columns.
Below, we describe one class of sets $S$ for which the answer is already known by the theory of $q$-rook numbers.

Let $\overline{S}$ denote the complement of the set $S$.  We say that
$S\subseteq [n]\times [n]$ is a {\bf straight shape} if its elements
form a Young diagram.  Thus, to every integer partition $\lambda$
with at most $n$ parts and with largest part at most $n$ (i.e., to each sequence of integers
$(\lambda_1,\lambda_2,\ldots,\lambda_n)$ such that $n \geq \lambda_1\geq
\lambda_2 \geq \cdots \geq \lambda_n \geq 0$) there is an associated set $S = S_\lambda$. 
We have that $\#S_{\lambda}=\sum\lambda_i=|\lambda|$ is the sum of the parts of $\lambda$.  Similarly,
if $\lambda$ and $\mu$ are partitions such that $S_{\mu}\subseteq
S_{\lambda}$ then we say that the set $S_{\lambda}\backslash S_{\mu}$ has {\bf
  skew shape} and we denote it by $S_{\lambda/\mu}$. Figure \ref{fig:strtskshapes} gives examples of matrices in $T_q(n\times n,S,r)$ when $S$ is a straight shape, a skew shape, and the complement of a skew shape. Next we give three easy
facts about straight and skew shapes.

\begin{figure}
$$
\begin{array}{ccc}
S_{(4,3,2)} & S_{(5,5,4,3,1)/(2,2,1)} &
\overline{S_{(5,5,4,3,1)/(2,2,1)}}\\[2mm]
\left[ \begin {array}{ccccc} 0&0&0&0&a_{{15}}\\ 0&0
&0&a_{{24}}&a_{{25}}\\ 0&0&a_{{33}}&a_{{34}}&a_{
{35}}\\ a_{{41}}&a_{{42}}&a_{{43}}&a_{{44}}&a_{
{45}}\\ a_{{51}}&a_{{52}}&a_{{53}}&a_{{54}}&a_{
{55}}\end {array} \right],
&
\left[ \begin {array}{ccccc} 
a_{{11}}&a_{{12}}&0&0&0\\ 
a_{{21}}&a_{{22}}&0&0&0\\ 
a_{{31}}&0&0&0&a_{{35}}\\ 

0&0&0&a_{{44}}&a_{{45}}\\ 
0&a_{{52}}&a_{{53}}&a_{{54}}&a_{{55}}
\end {array} \right], 
&
\left[ \begin {array}{ccccc} 
0&0&a_{{13}}&a_{{14}}&a_{{15}}\\ 
0&0&a_{{23}}&a_{{24}}&a_{{25}}\\ 
0&a_{{32}}&a_{{33}}&a_{{34}}&0\\ 
a_{{41}}&a_{{42}}&a_{{43}}&0&0\\ 
a_{{51}}&0&0&0&0
\end {array} \right] 
\end{array}
$$
\caption{Representative matrices from $T_q(5\times5,S,r)$ when $S$ is a straight shape, a skew shape, and the complement of a skew shape.}
\label{fig:strtskshapes}
\end{figure}

\begin{remark} \label{pol:easyfactsshapes}
\begin{compactenum}[\rm (i)] 
\item Up to a rotation of $[n]\times [n]$, the complement $\overline{S_{\lambda}}$ of the straight shape $S_\lambda$ is also a straight shape.  However, $\overline{S_{\lambda/\mu}}$ is typically not a skew shape.
\item If $(i,j) \in S_{\lambda}$ then the rectangle
  $\{(s,t) \mid 1\leq s\leq i, 1\leq t\leq j\}$ is contained in
  $S_{\lambda}$. General skew shapes $S_{\lambda/\mu}$ do not have this
  property.
\item If $\lambda = (n,n-1,\ldots,2,1)$ and $\mu = (n-1,n-2,\ldots,1,0)$ are so-called ``staircase shapes'' then $S_{\lambda/\mu}$ is, up to rotation, the set of diagonal entries.  Thus the value $\#T_q(n\times n,S_{\lambda/\mu},n)$ is given in Proposition \ref{RL:prop2} while trivially $\#T_q(n\times n, \overline{S_{\lambda/\mu}},n)=\#\{\text{invertible diagonal matrices}\}=(q-1)^n$.
\end{compactenum}
\end{remark}

Given a set $S\subseteq [n]\times [n]$, the {\bf $r$ $q$-rook number} of
Garsia and Remmel \cite{garrem} is $R_r(S,q) = \sum_{C} q^{\inv(C,S)}$, where the sum is over all rook
placements $C\in T_1(n\times n,\overline{S},r)$ of $r$ non-attacking rooks in $S$ and where $\inv(C,S)$ is the number of squares in $S$ not directly above (in the same column) or to the left (in the same row) of any placed rook.

The following result of Haglund shows that when $S=S_{\lambda}$, we have that
$\#T_q(n\times n,S_{\lambda},n)$ is a polynomial, and in fact is the product of
a power of $q-1$ and a polynomial with nonnegative coefficients. We
reproduce Haglund's proof to emphasize where we use that $S$ is a
straight shape.
\begin{theorem*}[{\cite[Theorem 1]{jh}}]
For straight shapes $S_{\lambda}$,
\[
\#T_q(n\times n,S_{\lambda},r) = (q-1)^r q^{n^2-|\lambda|-r}R_r(\overline{S_{\lambda}},q^{-1}),
\]
\end{theorem*}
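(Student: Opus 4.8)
The plan is to construct the matrices in $T_q(n\times n,S_\lambda,r)$ one column at a time, from left to right, keeping track of the dimension of the span of the columns built so far; the straight-shape hypothesis is used exactly once, to ensure that each new column's ambient space contains the span of all previous columns. Write $\lambda'$ for the conjugate partition, so that the cells of column $j$ forced to be zero are exactly those in rows $1,\dots,\lambda'_j$, and set $W_j=\{x\in\F_q^n:x_i=0\text{ for }i\le\lambda'_j\}$, a coordinate subspace of dimension $n-\lambda'_j$. Since $\lambda'$ is weakly decreasing, $W_1\subseteq W_2\subseteq\cdots\subseteq W_n=\F_q^n$. Hence if $A\in T_q(n\times n,S_\lambda,r)$ and $V_j$ is the span of its first $j$ columns, then $V_{j-1}\subseteq W_{j-1}\subseteq W_j$, and so the $j$-th column, which must lie in $W_j$, either leaves the dimension unchanged --- and may then be any of the $q^{\dim V_{j-1}}$ vectors of $V_{j-1}$ --- or raises it by one, and may then be any of the $q^{\,n-\lambda'_j}-q^{\dim V_{j-1}}$ vectors of $W_j\setminus V_{j-1}$. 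Recording by $C=\{c_1<\cdots<c_r\}$ the columns at which the dimension jumps yields
\[
\#T_q(n\times n,S_\lambda,r)=\sum_{C}N_C,\qquad N_C=\prod_{s=1}^r\bigl(q^{\,n-\lambda'_{c_s}}-q^{s-1}\bigr)\cdot\prod_{j\notin C}q^{\,\#\{c\in C:\,c<j\}},
\]
the sum over all $r$-element subsets $C\subseteq[n]$; the monotonicity $\lambda'_{c_1}\ge\cdots\ge\lambda'_{c_r}$ makes $N_C$ vanish exactly when no such matrix exists, so no side condition on $C$ is needed.

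The next step is to identify $\sum_C N_C$ with $(q-1)^r q^{\,n^2-|\lambda|-r}R_r(\overline{S_\lambda},q^{-1})$. I would factor $q^{\,n-\lambda'_{c_s}}-q^{s-1}=q^{s-1}(q-1)[n-\lambda'_{c_s}-s+1]_q$, which peels $(q-1)^r$ and $q^{\binom{r}{2}}$ off of $N_C$, and then interpret $\prod_{s}[n-\lambda'_{c_s}-s+1]_q$ as a sum over the non-attacking placements of $r$ rooks in $\overline{S_\lambda}$ with column set $C$: because of the monotonicity of $\lambda'$ along $C$, when the $s$-th rook (in column $c_s$) is placed there are exactly $n-\lambda'_{c_s}-(s-1)$ admissible rows --- the $s-1$ rows of the earlier rooks all lie among the rows allowed in column $c_s$ --- and $[n-\lambda'_{c_s}-s+1]_q$ records these $n-\lambda'_{c_s}-(s-1)$ choices weighted by $q$ to a power depending on the chosen row. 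Collecting all the powers of $q$ --- the $q^{\binom{r}{2}}$, the $\prod_{j\notin C}q^{\#\{c<j\}}$ coming from the columns where no jump occurs, and the per-rook weights --- and matching against the Garsia--Remmel statistic $\inv(\cdot,\overline{S_\lambda})$ should produce $N_C=(q-1)^r\sum_{C'}q^{\,n^2-|\lambda|-r-\inv(C',\overline{S_\lambda})}$, the inner sum over placements $C'$ with column set $C$; summing over $C$ gives the theorem.

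The hard part will be precisely this last bookkeeping of exponents. Since $n^2-|\lambda|=|\overline{S_\lambda}|$, one must show that for each non-attacking placement $C'$ of $r$ rooks in $\overline{S_\lambda}$, the quantity $|\overline{S_\lambda}|-r-\inv(C',\overline{S_\lambda})$ --- which, with the Garsia--Remmel conventions, is the number of cells of $\overline{S_\lambda}$ lying strictly above, or strictly to the left of, some rook of $C'$ --- equals the power of $q$ that $C'$ contributes on the left-hand side. This is a somewhat delicate inclusion--exclusion, since a single cell can lie above one rook in its column and simultaneously to the left of another in its row; carrying it out cleanly relies, once more, on the fact that $\lambda'_{c_1}\ge\cdots\ge\lambda'_{c_r}$ decreases along every such $C$ --- that is, on $S_\lambda$ being a straight shape, exactly the hypothesis that fails for general skew shapes. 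Everything else is routine: the column-by-column enumeration is elementary linear algebra over $\F_q$, and the factorization of the $q$-factors is a one-line manipulation.
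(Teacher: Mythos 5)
Your first step is correct, and it is a genuinely different route from the paper's: the paper passes to the complementary problem (matrices supported in the straight shape itself, via Remark \ref{pol:easyfactsshapes}(i)) and runs a Gaussian elimination ordered so that, by the rectangle property of Remark \ref{pol:easyfactsshapes}(ii), every intermediate matrix keeps its support in the shape; the pivots then give the rook placement and one counts the fiber over each placement. You instead stratify directly by the set $C$ of rank-jump columns, with the nested coordinate subspaces $W_1\subseteq\cdots\subseteq W_n$ carrying the straight-shape hypothesis, and your formula $\#T_q(n\times n,S_\lambda,r)=\sum_C N_C$ is right.

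The gap is that your argument stops exactly where the content of the theorem lies: the identity $N_C=(q-1)^r\sum_{C'}q^{\,n^2-|\lambda|-r-\inv(C',\overline{S_\lambda})}$ (sum over placements with column set $C$) is only asserted to ``should'' follow from bookkeeping you yourself flag as the hard part, and the orientation you wrote down for that bookkeeping is the one under which it fails. If $n^2-|\lambda|-r-\inv(C',\overline{S_\lambda})$ counted the cells of $\overline{S_\lambda}$ strictly above or strictly to the left of some rook, then for $n=2$, $\lambda=(1)$, $r=1$ the right-hand side would be $(q-1)(q^2+2)$, while your (correct) left-hand side is $(q-1)(2q+1)$. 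In your matrix coordinates, where $\overline{S_\lambda}$ is bottom-right justified, the cells absorbed into the power of $q$ must be those strictly \emph{below} a rook in its column or strictly to the \emph{right} of a rook in its row (equivalently, apply the paper's ``above/left'' description after rotating the board to be top-left justified, matching the direction of the elimination in the paper's proof). With that orientation your plan does close up, and in fact more cleanly than you fear: writing $b_j=n-\lambda'_j$ and $\rho_s$ for the row of the rook in column $c_s$, the absorbed-cell count is $\sum_s(n-\rho_s)+\sum_s(n-c_s)-I(\rho)$, where $I(\rho)=\#\{t<s:\rho_t>\rho_s\}$ counts the doubly-absorbed cells $(\rho_t,c_s)$, which lie on the board automatically since $\rho_t>\rho_s>\lambda'_{c_s}$; since $\sum_{j\notin C}\#\{c\in C:c<j\}=\sum_s(n-c_s)-\binom{r}{2}$, the needed identity reduces to $\sum_\rho q^{\sum_s(n-\rho_s)-I(\rho)}=\prod_s[b_{c_s}-s+1]_q$, which follows by choosing the rows left to right: by the monotonicity of $\lambda'$ along $C$ the $s$-th rook has $b_{c_s}-s+1$ available rows, contributing $q$ to the power of the number of unused allowed rows below it, and these exponents sweep out $0,\dots,b_{c_s}-s$ independently of earlier choices. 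So the strategy is viable, but as submitted the decisive identification with $R_r(\overline{S_\lambda},q^{-1})$ is missing and its sketch points in the wrong direction.
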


\begin{proof}[Proof sketch.] From Remark \ref{pol:easyfactsshapes} (i)
  it is equivalent to work with $T_q(n\times n,
  \overline{S_{\lambda}},r)$.  Choose a matrix
  $A$ in $T_q(n\times n, \overline{S_{\lambda}},r)$, that is, whose support 
  is in $S_{\lambda}$, and perform Gaussian elimination in the following
  order: traverse columns from bottom to top, starting with the last column.  
  When you come to a nonzero entry (i.e., a pivot), use it to eliminate the 
  entries above it in the same column and to its left in the same row.  
  Then move on to the next column and repeat.  The crucial point
  is that, by Remark \ref{pol:easyfactsshapes} (ii), each step in the
  elimination gives another matrix contained in $T_q(n\times n,
  \overline{S_{\lambda}},r)$. After elimination, the positions of the
  pivots are a placement of $r$ non-attacking rooks on $S_{\lambda}$, and the
  number of matrices in $T_q(n\times n, S_{\lambda},r)$ that give a
  fixed rook placement is $(q-1)^r q^{\#S_{\lambda}-r-\inv(C,S_{\lambda})}$.
\end{proof}

\begin{remark}
Haglund's theorem also implies that $\#T_q(n\times n,S,r)$ is a polynomial in $q$ for any set $S$ that can be {\em arranged} into a straight shape by permuting rows and columns since $\#T_q$ is invariant under these permutations.
\end{remark}

\begin{question}
  The proof above fails for $\overline{S_{\lambda/\mu}}$ by Remark
  \ref{pol:easyfactsshapes}(ii). However, computations using Stembridge's
  Maple package {\tt reduce} \cite{stemr} suggest that when $S$ is a skew shape, $\#T_q$ is still a 
  polynomial and that when $S$ is the complement of a skew shape, $\#T_q$ is a
  power of $q - 1$ times a polynomial with nonnegative coefficients.  Is this true for all
  skew shapes and their complements?    
\end{question}

(Recall that any counter-examples satisfy $n=7$ and $\#S\geq 28$ or $n \geq 8$.)

\begin{question}
  \noindent Haglund's theorem and the preceding question suggest
  similarities between $\#T_q$ for $S$ and $\overline{S}$ that is reminiscent of the classical reciprocity of rook placements and rook numbers (see 
  \cite{chow} for a short combinatorial proof). Dworkin \cite[Theorem
  8.21]{dwork} gave an analogue of this classical reciprocity for $q$-rook
  numbers $R_r(S,q)$ when $S=S_{\lambda}$. By Haglund's result, this implies a
  reciprocity formula relating $T_q(n\times n,S_{\lambda},r)$ and
  $T_q(n\times n,\overline{S_{\lambda}},r)$. Can this reciprocity be
  extended to skew or other shapes? If so, we could recover the
  formula for $f_{n,n}$ in Proposition \ref{RL:prop2} from the formula
  of its complement: $(q-1)^n$.
\end{question}

\small

\noindent Joel Brewster Lewis, Alejandro H. Morales,
Steven V Sam, and Yan X Zhang \\
Department of Mathematics, Massachusetts Institute of Technology \\
Cambridge, MA USA 02139\\
\{{\tt jblewis, ahmorales, ssam, yanzhang}\}{\tt @math.mit.edu}

\bigskip

\noindent Ricky Ini Liu \\
Department of Mathematics, University of Michigan\\
Ann Arbor, MI USA 48109\\
{\tt riliu@umich.edu}

\bigskip

\noindent Greta Panova \\
Department of Mathematics, UCLA\\
Los Angeles, CA USA 90095\\
{\tt panova@math.ucla.edu}

\end{document}